\documentclass[letterpaper,11pt]{amsart}
\usepackage[a4paper]{geometry}
\geometry{
    tmargin= 3.5cm,
    bmargin= 3cm,
    rmargin= 3cm,
    lmargin= 3cm,
    }

\usepackage{amsmath,amssymb,amsxtra,amsthm,hyperref}
\usepackage{tikz}
\usepackage{color}
\makeatletter

\makeatother

\newcommand{\tcb}[1]{\textcolor{black}{#1}}

\newcommand{\bh}[1] {\mathcal{B}(\mathcal{#1})}
\newcommand{\pref}[1] {(\ref{#1})}
\newcommand{\diag} {\operatorname{diag}}
\newcommand{\lspan} {\operatorname{span}}

\newcommand{\cH}{\mathcal{H}}
\newcommand{\cK}{\mathcal{K}}
\newcommand{\cA}{\mathcal{A}}
\newcommand{\cB}{\mathcal{B}}
\newcommand{\cD}{\mathcal{D}}

\newcommand{\cN}{\mathcal{N}}
\newcommand{\cO}{\mathcal{O}}
\newcommand{\cQ}{\mathcal{Q}}

\newcommand{\cX}{\mathcal{X}}

\newcommand{\bF}{\mathbb{F}}

\def\mult{\operatorname{Mult}}

\newtheorem{theorem}{Theorem}[section]
\newtheorem{corollary}[theorem]{Corollary}
\newtheorem{proposition}[theorem]{Proposition}    
\newtheorem{lemma}[theorem]{Lemma}

\theoremstyle{definition}
\newtheorem{definition}[theorem]{Definition}

\newtheorem{example}[theorem]{Example}
\newtheorem{remark}[theorem]{Remark}

\numberwithin{equation}{section}


\begin{document}

\title[Dilation and Semigroup Dynamical System]{Dilation theory for right LCM semigroup \\dynamical systems}

\author{Marcelo Laca}
\address{Department of Mathematics and Statistics, University of Victoria, Victoria, B.C. V8W 3R4}
\email{laca@uvic.ca}

\author{Boyu Li}
\address{Department of Mathematics and Statistics, University of Victoria, Victoria, B.C. V8W 3R4}
\email{boyuli@uvic.ca}

\date{\today}

\thanks{\tcb{This research was supported by the Natural Sciences and Engineering Research Council of Canada Discovery Grant RGPIN-2017-04052.}\ The second author was supported by a \tcb{postdoctoral} fellowship of \tcb{the} Pacific Institute for the Mathematical Sciences.}

\subjclass[2010]{47A20, 43A35, 20M30}
\keywords{dilation, semigroup, dynamical system, right LCM}

\begin{abstract} This paper examines actions of right LCM semigroups by endomorphisms of C*-algebras that encode an additional structure of the right LCM semigroup. We define contractive covariant representations for these semigroup dynamical systems and prove a generalized Stinespring's dilation theorem showing that these representations  can be dilated if and only if the map on the $\mathrm{C}^*$-algebra is unital and completely positive. This generalizes earlier results about dilations of right LCM semigroups of contractions. In addition, we also give sufficient conditions under which a contractive covariant representation of a right LCM system can be dilated to an isometric representation of the boundary quotient.
\end{abstract}

\maketitle

\section{Introduction}
A contractive representation of a semigroup $P$ on a Hilbert space $\cH$ is a semigroup homomorphism $T:P\to\bh{H}$ such that $\|T_p\| \leq 1$ for every $p\in P$. If $p$ is not invertible in $P$ the operator $T_p$ does not have to be isometric. For example, any contractive operator $A\in\bh{H}$ gives rise to a contractive representation $T:\mathbb{N}\to\bh{H}$ by $T(n)=A^n$. Nevertheless, a celebrated theorem of Sz.\ Nagy's  shows that a contractive representation of $\mathbb{N}$ is always the compression of an isometric representation. In other words, there exists an isometry $V$ on a larger Hilbert space $\cK\supset\cH$, such that 
\[ A^n = P_{\cH} V^n \big|_\cH \qquad (n\in\mathbb{N}).\]
The operator $V$ is called an isometric dilation of $T$.  Nagy's dilation theorem has been generalized 
in several directions. Examples include Ando's dilation for commuting contractions \cite{Ando1963}, Brehmer's regular dilation \cite{Brehmer1961}, and Frazho-Bunce-Popescu's dilation of row contractions \cite{Frazho1982, Bunce1984, Popescu1989}.

In his seminal work on $\mathrm{C}^*$-algebras of quasi-lattice ordered semigroups \cite{Nica1992}, Nica abstracts a notion of covariance (Nica-covariance) from the left-regular representation and uses it to define a sensible notion of universal semigroup $\mathrm{C}^*$-algebra. The study of Nica-covariant representations  transformed the study of semigroup $\mathrm{C}^*$-algebras into a very active area, and guided its development until the inception of a vast generalization in work of X. Li \cite{XLi2012}. 
Early developments include the semigroup crossed product techniques from \cite{LacaRaeburn1996}, applications to Artin monoids \cite{CrispLaca2002,CrispLaca2007}; more recent ones include the analysis of right LCM semigroups \cite{BLS2017, Starling2015}.
Meanwhile, dilation theory also developed as an important tool in connecting various types of semigroup representations, as, for example,  dilation theorems for lattice ordered semigroups \cite{Fuller2013,BLi2014} and right LCM semigroups \cite{Li2017, BLi2019}.

The study of semigroup representations and semigroup $\mathrm{C}^*$-algebras is closely related to the study of semigroup dynamical systems. For example, the Cuntz algebra  $\cO_n$ can be realized as a crossed product of a certain semigroup dynamical system over a UHF algebra of type $n^\infty$. Moreover, as pointed out in \cite{Nica1992} and  formally established in \cite{LacaRaeburn1996}, $\mathrm{C}^*$-algebras of quasi-lattice ordered semigroups also possess a crossed product structure, of a canonical action of  the semigroup  on the commutative diagonal algebra. This construction also motivated the study of boundary quotients, an analogue of the Cuntz algebras in the realm of semigroup $\mathrm{C}^*$-algebras. On the other hand, in the realm of non-self-adjoint operator algebras, semigroup dynamical systems play a central role in the construction of various semicrossed product algebras and the computation of their $\mathrm{C}^*$-envelope \cite{EvgElias2012, DK2012, DFK2014, Li2020env}. Dilation theory is often a useful tool in these studies. 

In this paper, we consider a class of dynamical systems over cancellative right LCM semigroups,  which we call {\em right LCM dynamical systems}. In general, the semigroup action $\alpha$ for a semigroup dynamical system only encodes the semigroup multiplication via $\alpha_p\alpha_q=\alpha_{pq}$, and any additional structure of the semigroup is often lost.  For example, even if the semigroup has a lattice structure, there is no reason to expect that this structure will be reflected by a semigroup action, although in some cases it is indeed automatically preserved, e.g. the action by endomorphisms that lead to the Bost-Connes algebra. It was the analysis of this specific example that led to the consideration of  semigroup actions that ``respect the lattice structure" \cite[Definition 3]{Laca1998}, and our first step is to extend this definition to actions by right LCM semigroups, see Definition \ref{df.respectLCM} below. 

In the study of semigroup dynamical systems $(\cA,P,\alpha)$, an isometric covariant representation is a pair $(\pi,V)$ where $\pi$ is a $*$-representation of the $\mathrm{C}^*$-algebra $\cA$,  $V$ is an isometric representation of the semigroup $P$, and the $P$-action $\alpha$ is encoded by the covariance relation:
\[V(p) \pi(a) V(p)^* = \pi(\alpha_p(a)).\]
We can similarly define a contractive covariant representation to be a pair $(\phi, T)$ where $\phi$ is a unital $*$-preserving linear map, which need not   be multiplicative, and $T$ is a contractive representation of the semigroup $P$. Again, the $P$-action is encoded by the covariance relation: 
\[T(p) \phi(a) T(p)^* = \phi(\alpha_p(a)).\]
Our main result, Theorem \ref{thm.dym.dilation}, states that a contractive covariant representation $(\phi,T)$ can be dilated to an isometric covariant representation $(\pi,V)$ if and only if $\phi$ is a unital completely positive map. In the special case when the semigroup is trivial, i.e. $P=\{e\}$, our theorem recovers the celebrated Stinespring's dilation theorem for unital completely positive maps. 

The  paper is organized as follows. In Section \ref{sec.prelim} we give a brief overview of semigroups and their representations. In Section \ref{sect.LCMDynSys} we define  right LCM semigroup dynamical systems and their isometric and \tcb{contractive} covariant representations, Definitions \ref{df.CovariantPair} and \ref{df.ContractiveCovariantPair}. In order to study dilations on semigroup dynamical systems  in \tcb{S}ection \ref{sect.Naimark}, we first define the notion of kernel systems, Definition \ref{df.kernel} and then develop a general framework to deal with dilations of our semigroup dynamical systems, Theorem \ref{thm.kernel.dilation}. This can be viewed as a generalization of the Naimark dilation theorem for positive definite Toeplitz kernels, a major tool in the study of dilation theory of semigroup representations \cite{Popescu1999b, BLi2019}. In section \ref{sect.dilation} we prove our main result, Theorem \ref{thm.dym.dilation}, by first building a kernel system $K$ from a contractive covariant representation $(\phi,T)$, Definition \ref{df.K}, and then proving that this kernel system is positive if and only if $\phi$ is a unital completely positive map, Proposition \ref{prop.positive.ucp}. The proof is then concluded by showing that when $\phi$ is unital completely positive, the minimal Naimark dilation for $K$  \tcb{is} an isometric covariant representation. 
One major motivation for this paper comes from the dynamical systems arising from semigroup $\mathrm{C}^*$-algebras and their boundary quotients, and we explore such connections in section \ref{sect.exLCM}. As a highlight, we characterize all representations that can be dilated to a representation of the boundary quotient, Corollary \ref{cor.QDilation}. Finally, in section \ref{sect.example}, we explore some additional constructions of right LCM semigroup dynamical systems and their dilation theory, yielding several new results on dilating unital completely positive maps satisfying certain relations.

\section{Preliminaries}\label{sec.prelim}

\subsection{Semigroups and representations} 

A semigroup $P$ is a set with an associative multiplication. An element $e\in P$ is called the identity if $ex=xe=x$ for all $x\in P$. A semigroup with an identity is called a monoid. An element $p\in P$ need not have an inverse; when it does, it is called \emph{a unit}. The set of all units is denoted by $P^*$, which is a group on its own. 

A semigroup $P$ is called \emph{left-cancellative} if for any $p,x,y\in P$ with $px=py$, we must have $x=y$. We can similarly define the notion of right-cancellative, and we say a semigroup is \emph{cancellative} if it is both left and right cancellative. It is often convenient to assume that the semigroup $P$ embeds inside a group $G$, in which case $P$ must be cancellative. However, the converse is false: cancellative semigroups need not embed in groups, and it is often a challenging task to verify whether a semigroup can be embedded in a group. For example,  it was only relatively recently that all Artin monoids were shown to embed in their corresponding Artin groups \cite{Paris2002}. Throughout this paper, unless stated otherwise, we assume the semigroup $P$ is cancellative and contains an identity $e$.

Nica first defined and studied quasi-lattice ordered groups in \cite{Nica1992}. Consider a semigroup $P$ inside a group $G$ with $P\cap P^{-1}=\{e\}$. The semigroup $P$ defines a partial order on $G$ by $x\leq y$ when $x^{-1}y\in P$. This order is called a \emph{quasi-lattice order} if, for any finite $F\subset G$ with a common upper bound, there exists a least common upper bound. The pair $(G,P)$ is called a \emph{quasi-lattice ordered group}, and we often refer to $P$ as a \emph{quasi-lattice ordered semigroup}. 

Nica's work has been extended to more general classes of semigroups, for example, right LCM semigroups \cite{BLS2017}, 
which are a large class of semigroups that closely resemble  quasi-lattice ordered semigroups. 
A left-cancellative semigroup $P$ is called a \emph{right LCM semigroup} if for any $p,q\in P$, either $pP\cap qP=\emptyset$ or else $pP\cap qP=rP$ for some $r\in P$. Here, the element $r$ can be seen as a least-common upper bound of $p,q$. Since we do not preclude non-trivial units in $P$, it is possible that the choice of $r$ is not unique. Nevertheless, for $r,s\in P$, $rP=sP$ precisely when $r=su$ for some unit $u\in P^*$. It is easy to see that quasi-lattice ordered semigroups, and more generally the weak quasi-lattice ordered semigroups from \cite{ABCD2019}, are right LCM, but the converse does not hold. 

Quasi-lattice ordered and right LCM semigroups cover a wide range of familiar semigroups. Subsemigroups of $\mathbb{R}^+$, the free semigroup $\mathbb{F}_n^+$, right-angled Artin monoids $A_\Gamma^+$ are all quasi-lattice ordered, and hence right LCM. Finite-type Artin monoids are in fact lattice ordered semigroups \cite{BrieskornSaito1972}. However, it is unknown whether all Artin monoids are quasi-lattice ordered semigroups inside their corresponding Artin groups. Nevertheless, since for right LCM semigroups the least common upper bound is only required for semigroup elements, we do know that Artin monoids are right LCM \cite[Proposition 4.1]{BrieskornSaito1972}, in fact, they are weak quasi-lattice ordered. 

A \emph{contractive representation} $T$ of a semigroup $P$ is a unital semigroup homomorphism $T:P\to\bh{H}$ such that each $T(p)$ is a contraction. The representation is called isometric (resp.\ unitary) if each $T(p)\in\bh{H}$ is an isometry (resp.\ a unitary operator).  An elementary argument shows that when $u$ is invertible in $P$, then $T(u)$ must be unitary.  So the restriction of a contractive representation to the group of units $P^*$ is a
unitary representation. 

Just like with groups, an important representation of a semigroup is its left regular representation 
$\lambda:P\to\cB(\ell^2(P))$  defined by $\lambda(p)\delta_q=\delta_{pq}$ on the standard orthonormal basis $\{\delta_p\}_{p\in P}$. It follows from the left-cancellation that $\lambda(p)$ maps the orthonormal basis to an orthonormal set, and thus $\lambda$ is an isometric representation. The reduced semigroup $\mathrm{C}^*$-algebra of $P$, often denoted as $\mathrm{C}^*_\lambda(P)$ is the {$\mathrm{C}^*$-algebra} generated by the image of $\lambda$. 

One might be tempted to define the semigroup $\mathrm{C}^*$-algebra as the universal $\mathrm{C}^*$-algebra with respect to isometric representations of the semigroup. Murphy proved that under such \tcb{a} definition, the universal object for isometric representations of $\mathbb{N}^2$ is not nuclear, which is undesirable as the universal object for an abelian semigroup as simple as $\mathbb{N}^2$. The issue is resolved by requiring that the range projections commute. This was abstracted and generalized by Nica, leading to what is now known as the Nica-covariance condition. In terms of a right LCM semigroup $P$, an isometric representation $V:P\to\bh{H}$ is called \emph{Nica-covariant} if for any $p,q\in P$, 
\[V(p)V(p)^*V(q)V(q)^* = 
\begin{cases}
V(r)V(r)^*, &\mbox{ if } pP\cap qP=rP\tcb{;} \\
0, &\mbox{ if } pP\cap qP=\emptyset\tcb{.}
\end{cases}
\]

One can easily verify that, for the left-regular representation $\lambda$, the range projection $\lambda(p)\lambda(p)^*$ is the orthogonal projection onto the subspace $\ell^2(pP)$, and thus satisfies the Nica-covariance condition. For a right LCM semigroup $P$, the universal $\mathrm{C}^*$-algebra of the isometric Nica-covariant representations is called the semigroup $\mathrm{C}^*$-algebra, which is often denoted as $\mathrm{C}^*(P)$. 

\begin{example} Consider the semigroup $P=\mathbb{N}^2$, which is the free abelian semigroup generated by two generators $e_1,e_2$. An isometric representation $V:\mathbb{N}^2\to\bh{H}$ is uniquely determined by its value on two commuting generators $V_i=V(e_i)$, $i=1,2$. Here, $V_1, V_2$ must commute because $e_1,e_2$ commute.

Since $e_1P\cap e_2P=(e_1+e_2)P$, the Nica-covariance condition requires that \[V_1V_1^* V_2V_2^* = V_1V_2 (V_1V_2)^*.\]
One can easily check that this is equivalent to requiring that $V_1$ and $V_2$ are $\ast$-commuting, i.e., $V_1$ commutes with both $V_2$ and $V_2^*$. 
\end{example} 

\begin{example} Consider the free semigroup $P=\mathbb{F}_n^+$, with $n$ free generators $e_1,\cdots,e_n$. An isometric representation $V:\mathbb{F}^+_n\to\bh{H}$ is uniquely determined by $n$ non-commuting isometries $V_i=V(e_i)$. 

Since $e_i P\cap e_j P=\emptyset$ for all $i\neq j$, the Nica-covariance condition requires that $V_i V_i^*$ are pairwise orthogonal projections. This is often characterized by a single condition $\sum_{i=1}^n V_iV_i^*\leq I$.  
\end{example}

\subsection{Semigroup dynamical systems}

Suppose $\cA$ is a unital $\mathrm{C}^*$-algebra and $P$ is a semigroup. A $P$-action on $\cA$ is a map $\alpha$ that sends each $p\in P$ to a $\ast$-endomorphism $\alpha_p:\cA\to\cA$ that satisfies $\alpha_p\circ \alpha_q=\alpha_{pq}$ and $\alpha_e=id$. Throughout this paper, we always assume that each $\alpha_p$ is injective because this plays a crucial role in our constructions. The action $\alpha$ is called automorphic if each $\alpha_p$ is a $\ast$-automorphism of $\cA$. Whenever $p$ is invertible, $\alpha_p$ must be a $\ast$-automorphism. 

A \emph{semigroup dynamical system} is a triple $(\cA,P,\alpha)$ where $\cA$ is a unital $\mathrm{C}^*$-algebra, $P$ is a left-cancellative semigroup\tcb{, and $\alpha$ is a $P$-action on $A$}. When each $\alpha_p$ is injective, we say that  $\alpha$ is an injective $P$-action on $\cA$. For each $p\in P$, we denote $\cA_p=\alpha_p(\cA)$, which is a $\mathrm{C}^*$-subalgebra of $\cA$. This subalgebra has a unit $\alpha_p(1)$. Since $\alpha$ is a $\ast$-endomorphism, $\alpha_p(1)$ is always an orthogonal projection. Moreover, certain ordering on the semigroup is preserved by $\alpha_p(1)$.

\begin{lemma}\label{lm.LCM.N.1}  If  $(\cA,P,\alpha)$ is a semigroup dynamical system, then $\alpha_x(1)\alpha_y(1)=\alpha_y(1)$ for $x,y,p\in P$ with $y=xp$.
\end{lemma}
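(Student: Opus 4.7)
The plan is to unfold the semigroup action on the factorization $y = xp$ and then exploit the multiplicativity of $\alpha_x$, noting that even though $\alpha_x$ is not required to be unital, the projection $\alpha_x(1)$ acts as the identity on the subalgebra $\cA_x = \alpha_x(\cA)$.

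Concretely, first I would write
\[ \alpha_y(1) = \alpha_{xp}(1) = (\alpha_x \circ \alpha_p)(1) = \alpha_x(\alpha_p(1)), \]
using the defining property $\alpha_p\circ\alpha_q = \alpha_{pq}$ of the $P$-action. This exhibits $\alpha_y(1)$ as an element of $\cA_x$. Next I would multiply by $\alpha_x(1)$ and use that $\alpha_x$ is a $*$-homomorphism (hence multiplicative) to compute
\[ \alpha_x(1)\,\alpha_y(1) = \alpha_x(1)\,\alpha_x(\alpha_p(1)) = \alpha_x\bigl(1\cdot\alpha_p(1)\bigr) = \alpha_x(\alpha_p(1)) = \alpha_y(1). \]
This gives exactly the claimed identity.

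There is essentially no obstacle: the statement is a direct consequence of the semigroup law for $\alpha$ and the multiplicativity of each $\alpha_x$. The only mild subtlety worth emphasizing in the write-up is that $\alpha_x(1)$ need not equal the unit of $\cA$, but it automatically serves as the unit of the subalgebra $\cA_x$, and $\alpha_y(1)$ lies in that subalgebra because of the factorization $y=xp$. This is really the conceptual content of the lemma, since it is the ingredient that will later let one compare range projections along chains in $P$.
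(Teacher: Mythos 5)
Your proof is correct and is essentially identical to the paper's one-line argument: both rewrite $\alpha_y(1)$ as $\alpha_x(\alpha_p(1))$ and then absorb $\alpha_x(1)$ using the multiplicativity of $\alpha_x$. Your extra remarks about $\alpha_x(1)$ being the unit of $\cA_x$ are accurate but not needed beyond what the computation already shows.
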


\begin{proof} $\alpha_x(1)\alpha_y(1)=\alpha_x(1)\alpha_x(\alpha_p(1))=\alpha_x(\alpha_p(1))=\alpha_y(1).\qedhere$
\end{proof} 

\begin{remark} If $P$ is a right LCM semigroup with $P^* =\{e\}$, then one can define a partial order by $x\leq y$ whenever there exists a $p\in P$ with $y=xp$. In this case Lemma \ref{lm.LCM.N.1} proves that  $\alpha_x(1) \geq \alpha_y(1)$ whenever $x\leq y$.
\end{remark}

If $p\in P^*$, then $\cA_p=\cA$ since $\alpha_p$ is a $\ast$-automorphism. If $pP=qP$, then $p=qu$ for some $u\in P^*$, in which case $\cA_p=\cA_q$. 

Semigroup dynamical systems occur naturally in many well-known $\mathrm{C}^*$-algebraic constructions. For example, the Cuntz algebra $\cO_n$ can be constructed from a semigroup dynamical system via a single $\ast$-endomorphism $\alpha$ acting on a UHF algebra of type $n^\infty$. The single $\ast$-endomorphism induces a $\mathbb{N}$-action on the UHF algebra. 
An important class of semigroup dynamical systems that are central to this paper arises from considering  semigroup $\mathrm{C}^*$-algebras.

\begin{example} Let $P$ be a right LCM semigroup and $V:P\to\bh{H}$ be a universal  Nica-covariant isometric  representation of $P$. The diagonal algebra is defined by \[\cD_P=\overline{\lspan}\{V(p)V(p)^*: p\in P\},\]
and is a commutative $\mathrm{C}^*$-algebra because the Nica-covariance condition implies that $V(p)V(p)^*V(q)V(q)^*$ is either $0$ or $V(r)V(r)^*$ for some $r\in P$,
and the set of such $r$ only depends on the intersection $pP \cap qP$. The semigroup $P$ acts on $\cD_P$ naturally via $\alpha_p:\cD_P\to\cD_P$, where $\alpha_p(x)=V(p)xV(p)^*$. We shall study this dynamical system in more detail in section \ref{sect.exLCM}. 
\end{example}  

\section{Right LCM Dynamical System}\label{sect.LCMDynSys}

\subsection{The LCM condition}

 Recall that if $p,q$ are two elements in a right LCM semigroup $P$ and if $pP\cap qP=rP$,  then $r$ behaves like a least common upper bound of $p$ and $q$. In this paper we focus on dynamical systems that respect this feature of right LCM semigroups in a sense made precise in the following definition that generalizes \cite[Definition 3]{Laca1998}.

\begin{definition}\label{df.respectLCM} Let $P$ be a right LCM semigroup. An injective $P$-action $\alpha$ on a $\mathrm{C}^*$-algebra $\cA$ \emph{respects the right LCM} if each $\alpha_p(\cA)$ is an ideal of $\cA$, and for any $p,q\in P$,
\[ \alpha_p(1) \alpha_q(1) = \begin{cases}
\alpha_r(1), &\mbox{ if } pP\cap qP=rP; \\
0, &\mbox{ if } pP\cap qP=\emptyset.
\end{cases} \]
A  \emph{right LCM dynamical system} is a dynamical system $(\cA,P,\alpha)$ in which $\alpha$ respects the right LCM. 
\end{definition} 

Even though the choice of $r$ such that $rP=pP\cap qP$ may not be unique, the condition introduces no ambiguity. The reason is that when $rP=sP$, then $r=su$ for some unit $u\in P^*$,  hence $\alpha_u$ is a $\ast$-automorphism so that $\alpha_u(1)=1$ and  $\alpha_r(1)=\alpha_s(1)$. For each $p\in P$ we denote $E_p :=\alpha_p(1)$ which is clearly a projection and is the identity of the ideal $\cA_p :=\alpha_p(\cA) =  E_p \cA E_p$.
The right LCM condition manifests itself in a right LCM dynamical system in many ways. One of them is through the intersection of ideals $\{\cA_p\}$.

\begin{proposition} Let $(\cA,P,\alpha)$ be a right LCM dynamical system. Then for every $p,q\in P$, 
\[\cA_p \cap \cA_q = \begin{cases} 
\cA_r, &\mbox{ if } pP\cap qP=rP; \\
\{0\}, &\mbox{ if } pP\cap qP=\emptyset.
\end{cases}\]
\end{proposition}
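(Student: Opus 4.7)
The plan is to reduce everything to the identity $\cA_p = E_p\cA E_p$ with $E_p=\alpha_p(1)$ noted before the statement, together with the LCM relation on $\{E_p\}_{p\in P}$ supplied by Definition \ref{df.respectLCM}. I would handle the case $pP\cap qP=rP$ first by proving both inclusions, and then observe that the empty case falls out of the same computation with $E_pE_q=0$.

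For the inclusion $\cA_r\subseteq \cA_p\cap \cA_q$ I would write $r=ps=qt$ for some $s,t\in P$, so that $\alpha_r=\alpha_p\circ\alpha_s$ gives
\[
\cA_r=\alpha_p(\alpha_s(\cA))\subseteq\alpha_p(\cA)=\cA_p,
\]
and the analogous factorization through $t$ gives $\cA_r\subseteq \cA_q$. This direction uses only that $r$ lies in $pP\cap qP$, not the maximality provided by the right LCM hypothesis.

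For the reverse inclusion, take $a\in \cA_p\cap \cA_q$. Because $E_p$ is the unit of the ideal $\cA_p$, one has $E_paE_p=a$, and similarly $E_qaE_q=a$. Substituting the second identity into the first yields $a=E_pE_q\,a\,E_qE_p$. The LCM condition gives $E_pE_q=E_r$; since $E_r$ is self-adjoint, taking adjoints yields $E_qE_p=E_r$ as well, and therefore $a=E_raE_r\in E_r\cA E_r=\cA_r$. In the empty case the same chain of substitutions ends with $a=E_pE_q\,a\,E_qE_p=0$ because now $E_pE_q=0$, which proves $\cA_p\cap \cA_q=\{0\}$.

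I do not expect a serious obstacle here; the argument is essentially a bookkeeping exercise once the correct substitutions are made. The one subtlety worth flagging is the symmetry $E_pE_q=E_qE_p$, which I would justify from the self-adjointness of the right-hand side ($E_r$ or $0$) rather than invoking any prior commutativity of projections, together with the already-noted identification $\cA_r=E_r\cA E_r$ that makes the final membership step immediate.
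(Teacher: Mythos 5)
Your proof is correct and follows essentially the same route as the paper's: the inclusion $\cA_r\subseteq\cA_p\cap\cA_q$ via the factorization $r=ps=qt$, and the reverse inclusion by multiplying $a$ by $E_pE_q=E_r$ and using $\cA_r=E_r\cA E_r$. The only cosmetic difference is that the paper multiplies by $E_pE_q$ on one side only (concluding $a=aE_r\in\cA_r$ since $\cA_r$ is an ideal), whereas you use the two-sided compression; both are fine.
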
 

\begin{proof} If $a\in \cA_p\cap \cA_q$, then $aE_pE_q= aE_q=a$. 
If $pP\cap qP=\emptyset$, then $E_pE_q=0$ and thus $a=0$. 
If $pP\cap qP=rP$, then $E_pE_q=E_r$ and thus $a\in \cA_r$, proving $\cA_p\cap \cA_q \subset \cA_r$.
To prove the reverse inclusion, suppose $r\in pP\cap qP$ and write $r=pp_0=qq_0$ for some $p_0,q_0\in P$,
 so $\cA_r = \alpha_{pp_0}(\cA) \subset \alpha_p(\cA) = \cA_p$ and similarly for $q$. This proves $\cA_r\subseteq \cA_p \cap \cA_q$. 
\end{proof}

\begin{proposition}\label{prop.equivLCM} 
Let $P$ be a right LCM monoid. A dynamical system $(\cA, P, \alpha)$ is a right LCM dynamical system if and only if for every $p,q\in P$, 
\begin{equation}\label{eq:rightlcmsyst-charact}
\cA_p \cA_q := \{ab: a\in \cA_p, b\in \cA_q\}=\begin{cases} \cA_r, &\mbox{ if } pP\cap qP=rP;\\
\{0\}, &\mbox{ if } pP\cap qP=\emptyset.
\end{cases} 
\end{equation}
\end{proposition}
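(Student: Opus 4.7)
The plan is to treat the two implications separately, and in both directions the key technical fact is that once $\cA_p$ is known to be an ideal with unit $E_p=\alpha_p(1)$, the projection $E_p$ is automatically central in $\cA$ (because $E_p a\in\cA_p$ forces $E_paE_p=E_pa$ and similarly $E_paE_p=aE_p$). I would record this centrality once at the start of the proof and then apply it freely.

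For the forward direction, assume $(\cA,P,\alpha)$ is a right LCM dynamical system, so each $\cA_p$ is an ideal and $E_pE_q$ is $E_r$ or $0$ according to whether $pP\cap qP=rP$ or $\emptyset$. Using centrality, for $a\in\cA_p$ and $b\in\cA_q$, one has $ab=(E_pa)(E_qb)=E_pE_q\,ab$. This immediately gives $\cA_p\cA_q=\{0\}$ when $E_pE_q=0$ and $\cA_p\cA_q\subseteq\cA_r$ when $E_pE_q=E_r$. For the reverse inclusion in the LCM case, write any $c\in\cA_r$ as $c=E_rc=E_p(E_qc)$ with $E_p\in\cA_p$ and $E_qc\in\cA_q$, so $c\in\cA_p\cA_q$.

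For the converse, assume \eqref{eq:rightlcmsyst-charact}. Here the monoid hypothesis is crucial: taking $p=e$ gives $eP\cap qP=qP$, hence $\cA\cdot\cA_q=\cA_q$, so that $\cA_q$ is a left ideal; taking $q=e$ yields that $\cA_p$ is a right ideal. Thus every $\cA_p$ is a two-sided ideal and $E_p$ is central. Now if $pP\cap qP=\emptyset$, then $E_pE_q\in\cA_p\cA_q=\{0\}$. If $pP\cap qP=rP$, then $E_pE_q\in\cA_p\cA_q=\cA_r$, so $E_pE_q=E_r(E_pE_q)$, i.e.\ $E_pE_q=E_pE_qE_r$. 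On the other hand, $E_r\in\cA_r=\cA_p\cA_q$, so $E_r=ab$ for some $a\in\cA_p$, $b\in\cA_q$; using $a=E_pa$, $b=E_qb$ and centrality, $E_r=E_pE_q\,ab=E_pE_qE_r$. Combining $E_pE_q=E_pE_qE_r$ with $E_r=E_pE_qE_r$ and commutativity of these (central) projections yields $E_pE_q=E_r$.

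The only subtle point — and what I would flag as the main obstacle — is that \eqref{eq:rightlcmsyst-charact} is stated in terms of the raw set of products $\{ab:a\in\cA_p,b\in\cA_q\}$, not its closed linear span, so one should not try to appeal to the standard identity $\overline{IJ}=I\cap J$ for C*-ideals. The argument above circumvents this entirely: the forward direction produces products on the nose via $c=E_p(E_qc)$, and the backward direction extracts the ideal structure and the identity $E_pE_q=E_r$ directly from membership statements, never needing to close under sums or limits. The monoid assumption is used exactly once, to have $e\in P$ available for the specializations $p=e$ and $q=e$ that turn \eqref{eq:rightlcmsyst-charact} into the ideal property.
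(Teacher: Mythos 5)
Your proof is correct. The forward direction and the reduction of the converse to the identity $E_pE_q=E_r$ (via the specializations $p=e$, $q=e$ to obtain the ideal property) follow the paper's argument essentially verbatim; the centrality of $E_p$ that you record up front is Proposition \ref{prop.left.inv2}(1) in the paper, and your factorization $c=E_p(E_qc)$ is the same as the paper's $\alpha_r(a)=(\alpha_r(a)E_p)E_q$ up to which side the projections land on. Where you genuinely diverge is the final step of the converse: the paper writes $E_pE_q=\alpha_r(a)$ and shows $\alpha_r(ab)=\alpha_r(b)$ for all $b$, invoking the injectivity of $\alpha_r$ to conclude $a=1$; you instead exploit that \eqref{eq:rightlcmsyst-charact} asserts the \emph{raw product set} equals $\cA_r$, so that $E_r$ itself factors as $ab$ with $a\in\cA_p$, $b\in\cA_q$, giving $E_r=E_pE_qE_r$, which combined with $E_pE_q=E_pE_qE_r$ (from $E_pE_q\in\cA_r$) yields $E_pE_q=E_r$ without any appeal to injectivity. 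This is a clean observation: it shows the surjectivity half of the hypothesis already pins down $E_pE_q$, whereas the paper leans on the standing injectivity assumption. Your closing remark about not conflating $\{ab\}$ with $\overline{\lspan}\{ab\}$ matches the caution the authors record in the remark following the proposition.
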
 

\begin{proof} Suppose $(\cA, P, \alpha)$ is a right LCM dynamical system.
 For any $a,b\in\cA$, consider $\alpha_p(a)\alpha_q(b)=\alpha_p(a) E_pE_q \alpha_p(b)$. When $pP\cap qP=\emptyset$, $E_pE_q=0$ and $\alpha_p(a)\alpha_q(b)=0$, and hence $\cA_p\cA_q=\{0\}$. When $pP\cap qP=rP$, $E_pE_q=E_r \in\cA_r$, and $\alpha_p(a)E_r\alpha_p(b)\in \cA_r$ since $\cA_r$ is an ideal. Therefore, $\cA_p\cA_q\subseteq \cA_r$. To see the other inclusion, take any $\alpha_r(a)\in \cA_r$, since $E_r=E_pE_q$, we have $\alpha_r(a)=\alpha_r(a) E_p E_q$, where $\alpha_r(a) E_p\in \cA_p$ and $E_q\in \cA_q$. Hence, $\cA_p\cA_q=\cA_r$.

To prove the converse, suppose $(\cA, P, \alpha)$ satisfies  equation  \eqref{eq:rightlcmsyst-charact}.
 Let $p\in P$ and set $q=e\in P$. We have $\cA_e=\cA$. Since $pP\cap eP=pP$, we must have $\cA_p\cA \subseteq \cA_p$ and $\cA\cA_p\subseteq \cA_p$. This implies that $\cA_p$ is an ideal. For any $p,q\in P$, it suffices to prove that $E_p=\alpha_p(1)$ satisfies the right LCM condition. If $pP\cap qP=\emptyset$, then $E_pE_q\in \cA_p\cA_q=\{0\}$ so that $E_pE_q=0$. If $pP\cap qP=rP$, then $E_pE_q\in \cA_p\cA_q\subseteq \cA_r$. Therefore, $E_pE_q=\alpha_r(a)$ for some $a\in \cA$. Since $r=pp_0=qq_0$ for some $p_0,q_0\in P$, $\cA_r\subseteq \cA_p,\cA_q$. Hence, for any $\alpha_r(b)\in \cA_r$,
\[\alpha_r(ab)=E_pE_q\alpha_r(b)=\alpha_r(b)=\alpha_r(b)E_pE_q=\alpha_r(ba).\]
Since $\alpha_r$ is injective, $a$ must be the identity and $E_p E_q=E_r$. Therefore, $\alpha$ respects the right LCM. 
\end{proof} 

\begin{remark} One has to be cautious that our definition of the product $\cA_p\cA_q$ does not correspond to the product ideal of two ideals, which is usually defined to be the closure of the linear span of products and satisfies
\[\overline{\lspan}\{ab:a\in I, b\in J\}=I\cap J.\]
In general, the product set $\{ab: a\in I, b\in J\}$ is not  an ideal, so  it is crucial for Proposition \ref{prop.equivLCM} that we take advantage of the right LCM condition of our semigroup dynamical systems. 
\end{remark} 

As an immediate corollary, we obtain the following factorization property.

\begin{corollary} For a right LCM dynamical system and  $p,q\in P$,
\[\cA_p\cA_q = \cA_p\cap \cA_q.\]
\end{corollary}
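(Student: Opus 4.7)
The plan is to combine the two propositions that immediately precede this corollary. The first proposition evaluates the intersection $\cA_p \cap \cA_q$ case-by-case, while Proposition \ref{prop.equivLCM} evaluates the product $\cA_p \cA_q$ under the exact same case distinction, and both computations produce the same answer. So the corollary should be essentially a one-line consequence of matching outputs.

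More concretely, I would split on the two possible cases for $pP \cap qP$ in a right LCM semigroup. If $pP \cap qP = \emptyset$, then the first proposition gives $\cA_p \cap \cA_q = \{0\}$ and Proposition \ref{prop.equivLCM} gives $\cA_p \cA_q = \{0\}$, so both sides agree trivially. If instead $pP \cap qP = rP$ for some $r \in P$, then the first proposition gives $\cA_p \cap \cA_q = \cA_r$ and Proposition \ref{prop.equivLCM} gives $\cA_p \cA_q = \cA_r$, so again the two sides coincide.

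There is no real obstacle here, since both ingredients have already been proved in the text. The only minor subtlety worth flagging (as in the remark following Proposition \ref{prop.equivLCM}) is that $\cA_p \cA_q$ denotes the set-theoretic product $\{ab : a \in \cA_p,\, b \in \cA_q\}$ rather than the closed linear span of such products, so it is genuinely nontrivial that this raw product set is already equal to $\cA_p \cap \cA_q$; however this nontriviality has been absorbed into the statement of Proposition \ref{prop.equivLCM} itself, and the corollary just reads it off.
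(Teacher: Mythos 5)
Your proposal is correct and matches the paper's intended argument exactly: the corollary is stated as an immediate consequence of the two preceding propositions, both of which evaluate to $\cA_r$ or $\{0\}$ according to whether $pP\cap qP = rP$ or $pP\cap qP = \emptyset$. Your remark about the set-theoretic product versus the closed linear span is also the same caveat the paper itself makes.
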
 

Even though each $\alpha_p$ is not a $\ast$-automorphism, our injectivity assumption ensures that it does have a left-inverse, defined on all of $\cA$. 

\begin{proposition}\label{prop.left.inv} For each $p\in P$, let $\alpha_p^{-1} : \cA_p\to \cA$ be the inverse of $\alpha_p$ and define 
$\alpha_{p^{-1}}:\cA\to\cA$ by \[\alpha_{p^{-1}}(a) =  \alpha_p^{-1}(\alpha_p(1) a)\]
Then $\alpha_{p^{-1}}$ is well defined,  $\alpha_{p^{-1}} \circ \alpha_p=id$, and $\alpha_p \alpha_{p^{-1}} = \mult_{\alpha_p(1)}$ is the multiplication operator by $\alpha_p(1)$.
\end{proposition}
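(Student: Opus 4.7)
The plan is to unwind the three claims in turn, since each follows from the definitions once the key structural fact is isolated: in a right LCM dynamical system, $\cA_p = \alpha_p(\cA)$ is an ideal of $\cA$ with unit $E_p = \alpha_p(1)$. Well-definedness rests on this, and the two inverse identities are then short computations using that $\alpha_p$ restricts to a bijection $\cA \to \cA_p$ with inverse $\alpha_p^{-1}$ (available thanks to the standing injectivity assumption on $\alpha_p$).

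For well-definedness, I would observe that for any $a \in \cA$, $E_p a$ lies in $\cA_p$, because $E_p \in \cA_p$ and $\cA_p$ is an ideal (this is exactly part of Definition~\ref{df.respectLCM}). Hence $\alpha_p^{-1}(E_p a)$ makes sense as an element of $\cA$, so $\alpha_{p^{-1}}$ is a well-defined self-map of $\cA$.

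For $\alpha_{p^{-1}} \circ \alpha_p = \mathrm{id}$, I would compute, for $c \in \cA$,
\[
\alpha_{p^{-1}}(\alpha_p(c)) \;=\; \alpha_p^{-1}\bigl(E_p \alpha_p(c)\bigr) \;=\; \alpha_p^{-1}\bigl(\alpha_p(c)\bigr) \;=\; c,
\]
where the middle equality uses that $E_p = \alpha_p(1)$ is the unit of the subalgebra $\cA_p \ni \alpha_p(c)$. For the other composition, I would compute
\[
\alpha_p\bigl(\alpha_{p^{-1}}(a)\bigr) \;=\; \alpha_p\bigl(\alpha_p^{-1}(E_p a)\bigr) \;=\; E_p a \;=\; \mult_{\alpha_p(1)}(a),
\]
using that $\alpha_p \circ \alpha_p^{-1}$ is the identity on $\cA_p$ and that $E_p a \in \cA_p$ by the previous step.

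There is no real obstacle; the only subtle point is well-definedness, and it is precisely the ideal condition built into the right LCM hypothesis that does the work. Without that condition one would have no guarantee that $E_p a \in \cA_p$, and the formula defining $\alpha_{p^{-1}}$ could fail to make sense.
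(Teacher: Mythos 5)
Your proof is correct and follows essentially the same route as the paper's: well-definedness comes from $\alpha_p(1)a \in \cA_p$ (the ideal condition) together with injectivity of $\alpha_p$, and the two composition identities are the same one-line computations. No issues.
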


\begin{proof} Since $\cA_p$ is an ideal, $\alpha_p(1)a\in\cA_p$. Since $\alpha_p$ is injective, $\alpha_{p^{-1}}(a)$ is well defined. For any $a\in A$, \[\alpha_{p^{-1}}(\alpha_p(a))=\alpha_p^{-1}(\alpha_p(1)\alpha_p(a))=\alpha_p^{-1}(\alpha_p(a))=a,\]
and,
\[\alpha_{p}(\alpha_{p^{-1}}(a))=\alpha_p(\alpha_p^{-1}(\alpha_p(1)a))=\alpha_p(1) a. \qedhere\]

\end{proof}

The map $\alpha_{p^{-1}}$ has many nice properties.

\begin{proposition}\label{prop.left.inv2} For a right LCM dynamical system $(\cA,P,\alpha)$ and for every $p, q \in P$,
and $a\in \cA$ we have
\begin{enumerate}
\item  $\alpha_{p^{-1}}(a)=\alpha_p^{-1} (a \alpha_p(1))=\alpha_p^{-1} (\alpha_p(1) a \alpha_p(1))$;
\smallskip\item  $\alpha_{p^{-1}}$ is a surjective $\ast$-endomorphism on $\cA$; and 
\smallskip\item $\alpha_{(pq)^{-1}}(a)=\alpha_{q^{-1}}(\alpha_{p^{-1}}(a)).$
\end{enumerate}
\end{proposition}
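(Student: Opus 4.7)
The plan is to first establish that each projection $E_p := \alpha_p(1)$ is central in $\cA$, which makes parts (1)--(3) short formal consequences. The hypothesis of a right LCM dynamical system says that $\cA_p$ is a two-sided ideal of $\cA$, and $E_p$ is its unit, so for any $a \in \cA$ both $E_p a$ and $a E_p$ lie in $\cA_p$; applying the unit $E_p$ on either side inside $\cA_p$ then gives $E_p a = E_p a E_p = a E_p$. Part (1) is immediate from this, since $E_p a$, $a E_p$, and $E_p a E_p$ are three equal elements of $\cA_p$ on which $\alpha_p^{-1}$ is applied.

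For part (2), linearity of $\alpha_{p^{-1}}$ is clear, and $\ast$-preservation follows from $\ast$-preservation of $\alpha_p^{-1}\colon \cA_p \to \cA$ together with the identity $\alpha_{p^{-1}}(a) = \alpha_p^{-1}(a E_p)$ from part (1). For multiplicativity, I would use centrality to rewrite $E_p ab = (E_p a)(E_p b)$ as a product of two elements of $\cA_p$; since $\alpha_p^{-1}$ is multiplicative on the ideal $\cA_p$, this splits as $\alpha_{p^{-1}}(a)\alpha_{p^{-1}}(b)$. Surjectivity is an immediate consequence of the identity $\alpha_{p^{-1}}\circ\alpha_p = \mathrm{id}$ from Proposition \ref{prop.left.inv}.

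For part (3), the key ingredients are $\alpha_{pq}(1) = \alpha_p(\alpha_q(1))$ and $\alpha_{pq}^{-1} = \alpha_q^{-1}\circ \alpha_p^{-1}$ on $\cA_{pq}$. Expanding the definition gives
\[
\alpha_{(pq)^{-1}}(a) \;=\; \alpha_q^{-1}\bigl(\alpha_p^{-1}(\alpha_p(\alpha_q(1))\cdot a)\bigr).
\]
Using $E_p a = \alpha_p(\alpha_{p^{-1}}(a))$ from Proposition \ref{prop.left.inv} and the multiplicativity of $\alpha_p$, I would combine the inner product as $\alpha_p(\alpha_q(1))\cdot a = \alpha_p\bigl(\alpha_q(1)\cdot \alpha_{p^{-1}}(a)\bigr)$. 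Applying $\alpha_p^{-1}$ returns $\alpha_q(1)\cdot \alpha_{p^{-1}}(a)$, and then applying $\alpha_q^{-1}$ gives $\alpha_{q^{-1}}(\alpha_{p^{-1}}(a))$ by the definition of $\alpha_{q^{-1}}$.

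The only step that requires care is the centrality observation driving part (1), which is a general fact about units of two-sided ideals in unital $\mathrm{C}^*$-algebras; once it is established, parts (2) and (3) reduce to routine manipulations with $\alpha_p$ and $\alpha_p^{-1}$ inside $\cA_p$, and no serious obstacle is anticipated.
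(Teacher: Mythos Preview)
Your proposal is correct and follows essentially the same route as the paper: the paper also proves (1) by observing that $\alpha_p(1)a=\alpha_p(1)a\alpha_p(1)=a\alpha_p(1)$ because $\alpha_p(\cA)$ is an ideal with unit $\alpha_p(1)$, and handles (3) via the same manipulation $\alpha_{pq}(1)\,a=\alpha_p(\alpha_q(1)\,\alpha_{p^{-1}}(a))$. The only cosmetic difference is that for multiplicativity in (2) the paper applies $\alpha_p$ to both candidates and invokes injectivity, whereas you invoke multiplicativity of $\alpha_p^{-1}$ on $\cA_p$ directly---these are two phrasings of the same computation.
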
 

\begin{proof} For (1), since $\alpha_p(\cA)$ is an ideal of $\cA$, the elements $\alpha_p(1)a$ and $a\alpha_p(1)$ are in $ \alpha_p(\cA)$, and therefore,
\[\alpha_p(1)a = \alpha_p(1)a\alpha_p(1) = a\alpha_p(1).\]
Since $\alpha_p$ is injective, the inverses under $\alpha_p$ of these three terms are all equal to $\alpha_{p^{-1}}(a)=\alpha_p^{-1}(\alpha_p(1)a)$. 

For (2), it is easy to verify that $\alpha_{p^{-1}}$ is a $\ast$-preserving linear map since $\alpha_p$ is. To see it is multiplicative, let $a,b\in \cA$; then
\[\alpha_p (\alpha_{p^{-1}}(ab)) = \alpha_p(1) a b \]
On the other hand \[\alpha_p (\alpha_{p^{-1}}(a)\alpha_{p^{-1}}(b)) = \alpha_p(1) a \alpha_p(1) b = \alpha_p(1) a  b,\]
which gives the result because $\alpha_p$ is injective

For (3), recall notice that $\alpha_{pq}(1)=\alpha_{pq}(1)\alpha_p(1)$, and $\alpha_p(1)(a)=\alpha_p(\alpha_{p^{-1}}(a))$, and compute
\begin{align*}
\alpha_{(pq)^{-1}}(a) &= \alpha_{pq}^{-1} (\alpha_{pq}(1)a) \\
&= \alpha_q^{-1}(\alpha_p^{-1}(\alpha_{pq}(1)\alpha_p(1) a)) \\
&= \alpha_q^{-1}(\alpha_p^{-1}(\alpha_p( \alpha_q(1) \alpha_{p^{-1}}(a)))) \\
&= \alpha_q^{-1}(\alpha_q(1) \alpha_{p^{-1}}(a) ) \\
&= \alpha_{q^{-1}}(\alpha_{p^{-1}}(a)).\qedhere
\end{align*}
\end{proof}

\begin{example} A right LCM semigroup $P$ is called \emph{semi-lattice ordered} if $pP\cap qP\neq \emptyset$  for every $p,q\in P$. If a semi-lattice ordered monoid $P$ acts by $\ast$-automorphisms of $\cA$, then 
$\alpha_p(1)=1$ and $\cA_p=\cA$ for all $p\in P$, so the system $(\cA, P, \alpha)$ is always a right LCM dynamical system. 
\end{example} 

\begin{example} Let $X$ be a compact Hausdorff space and suppose that there exists a collection of clopen subsets $\{X_p\}_{p\in P}$ and a $P$-action $\beta$ on $X$ such that each $\beta_p:X\to X_p$ is a homeomorphism from $X$ onto $X_p$. This induces a $P$-action $\alpha$ on the unital commutative $\mathrm{C}^*$-algebra $C(X)$, where $\alpha_p:C(X)\to C(X_p)\subseteq C(X)$ is given by
\[\alpha_p(f)=f\circ \beta_p^{-1}\]

The action $\alpha$ is an LCM action if $X_p$ satisfies $X_p\cap X_q\subseteq X_r$ if $pP\cap qP=rP$, and $X_p\cap X_q=\emptyset$ if $pP\cap qP=\emptyset$. Indeed, we have $\alpha_p(C(X))=C(X_p)$ and thus $\alpha_p(C(X))\alpha_q(C(X))=C(X_p\cap X_q)$. 
\end{example}

\begin{example} Unfortunately, not all dynamical systems respect the right LCM. For example, the Cuntz algebra $\cO_n$ is closely related to the semigroup dynamical system $(\cA, \mathbb{N}, \alpha)$ where $\cA$ is the UHF algebra of type $n^\infty$ and the action $\alpha$ is determined by $\alpha_1(a)=e_{11}\otimes a$. This is not a right LCM dynamical system because the UHF algebra is a simple $\mathrm{C}^*$-algebra, and so the proper subalgebra $\alpha_1(\cA)$ cannot be an ideal.  
\end{example} 

\subsection{Dynamical systems over $\mathbb{F}_k^+$ and AF-algebras}

It is clear that if $(\cA,\mathbb{F}_k^+,\alpha)$ is an injective dynamical system, then the action $\alpha$ is uniquely determined by the endomorphisms  $\alpha_1, \cdots, \alpha_k$ associated to the generators. 
We would like to use this to 
provide  a simple characterization of right LCM semigroup dynamical systems over the free semigroup $\mathbb{F}_k^+$.

\begin{proposition}\label{prop.LCM.N} An injective semigroup dynamical system $(\cA,\mathbb{F}_k^+,\alpha)$ is a right LCM dynamical system if and only if for each $1\leq i\leq k$, $\alpha_i(\cA)$ is an ideal of $\cA$, and for all $1\leq i,j\leq k$ with $i\neq j$, $\alpha_i(1)\alpha_j(1)=0$.
\end{proposition}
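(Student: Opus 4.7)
The forward direction is immediate from Definition \ref{df.respectLCM}: since $e_i\mathbb{F}_k^+\cap e_j\mathbb{F}_k^+=\emptyset$ in the free monoid whenever $i\neq j$, the right LCM hypothesis forces $\alpha_i(1)\alpha_j(1)=0$, and the ideal property of each $\alpha_i(\cA)$ is built into that definition.

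For the backward direction I would argue in two stages. The first stage shows by induction on the length of $p\in\mathbb{F}_k^+$ that $\alpha_p(\cA)$ is an ideal of $\cA$. Writing $p=e_ip'$ with $p'$ shorter, we have $\alpha_p(\cA)=\alpha_i(\alpha_{p'}(\cA))$; by the inductive hypothesis $\alpha_{p'}(\cA)$ is an ideal of $\cA$, and since $\alpha_i:\cA\to\alpha_i(\cA)$ is a surjective $\ast$-homomorphism, its image $\alpha_i(\alpha_{p'}(\cA))$ is an ideal of $\alpha_i(\cA)$. The standard $\mathrm{C}^*$-algebraic fact that a closed ideal of a closed ideal is a closed ideal (proved via an approximate unit of the inner ideal) then shows it is an ideal of $\cA$.

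The second stage proves the right LCM identity
\[\alpha_p(1)\alpha_q(1)=\begin{cases}\alpha_r(1),&\mbox{ if } p\mathbb{F}_k^+\cap q\mathbb{F}_k^+=r\mathbb{F}_k^+,\\ 0,&\mbox{ if } p\mathbb{F}_k^+\cap q\mathbb{F}_k^+=\emptyset,\end{cases}\]
by strong induction on $|p|+|q|$. If either $p$ or $q$ equals $e$, the claim follows from Lemma \ref{lm.LCM.N.1}. Otherwise, write $p=e_ip'$ and $q=e_jq'$. When $i=j$, left cancellation in $\mathbb{F}_k^+$ gives $p\mathbb{F}_k^+\cap q\mathbb{F}_k^+=e_i(p'\mathbb{F}_k^+\cap q'\mathbb{F}_k^+)$, and applying the $\ast$-homomorphism $\alpha_i$ to the identity furnished by the inductive hypothesis for $(p',q')$ yields the desired conclusion. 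When $i\neq j$, one has $p\mathbb{F}_k^+\cap q\mathbb{F}_k^+=\emptyset$; using the Lemma \ref{lm.LCM.N.1} identities $\alpha_p(1)=\alpha_p(1)\alpha_i(1)$ and $\alpha_q(1)=\alpha_j(1)\alpha_q(1)$ together with the hypothesis $\alpha_i(1)\alpha_j(1)=0$, we compute
\[\alpha_p(1)\alpha_q(1)=\alpha_p(1)\alpha_i(1)\alpha_j(1)\alpha_q(1)=0.\]

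No step presents a serious obstacle. The only slightly technical point is the $\mathrm{C}^*$-algebraic fact used in the first stage, which rests on approximate units rather than purely algebraic manipulations. The case $i\neq j$ in the second stage, which might look like the crux, turns out to be a one-line consequence of the orthogonality hypothesis combined with Lemma \ref{lm.LCM.N.1}, making the proof cleaner than one might initially expect.
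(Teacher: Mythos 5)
Your proof is correct. The forward direction and the orthogonality computation in your second stage match the paper's argument in substance: the paper also reduces to the first position where the two words differ and uses $\alpha_{x_s}(1)\alpha_{y_s}(1)=0$ together with Lemma \ref{lm.LCM.N.1}, while you reach the same point by peeling off the common prefix one letter at a time inside a strong induction --- a cosmetic difference. The genuine divergence is in how you show $\alpha_w(\cA)$ is an ideal for a general word $w$. The paper does this by an explicit computation: it introduces the left inverse $\alpha_{w_1^{-1}}(b)=\alpha_{w_1}^{-1}(\alpha_{w_1}(1)b)$ and repeatedly rewrites $\alpha_w(a)b$ as $\alpha_{w_1\cdots w_j}\bigl(\alpha_{w_{j+1}\cdots w_n}(a)\,b_j\bigr)$, pushing the external factor inside the image letter by letter. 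You instead invoke the abstract facts that a surjective $\ast$-homomorphism maps ideals to ideals and that a closed ideal of a closed ideal of a $\mathrm{C}^*$-algebra is an ideal of the whole algebra. Both are valid (the images are automatically closed, so the approximate-unit argument applies); your route is shorter and more conceptual, while the paper's computation is self-contained, purely algebraic, and incidentally exhibits the factorization $\alpha_w(a)b=\alpha_w(a\,\alpha_{w^{-1}}(b))$ that is reused elsewhere via Proposition \ref{prop.left.inv2}. No gaps.
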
 

\begin{proof}
First assume that $\alpha_i(\cA)$ is an ideal of $\cA$ for all $1\leq i\leq k$. We first prove that  $\alpha_w(\cA)$ is an ideal of $\cA$ for every word $w=w_1w_2\cdots w_n\in\mathbb{F}_k^+$.  Since $\alpha_i(\cA)$ is an ideal of $\cA$ and $\alpha_i$ is injective, we can define $\alpha_{i^{-1}}(x)=\alpha_i^{-1}(\alpha_i(1)x)$ which is a left-inverse for $\alpha_i$. If $a,b\in \cA$, then
\begin{align*}
\alpha_w(a) b &= \alpha_{w_1}(\alpha_{w_2w_3\cdots w_n}(a)) \alpha_{w_1}(1) b \\
&= \alpha_{w_1}(\alpha_{w_2w_3\cdots w_n}(a)) \alpha_{w_1}(\alpha_{w_1^{-1}}(b))  \\
&= \alpha_{w_1}(\alpha_{w_2w_3\cdots w_n}(a)\alpha_{w_1^{-1}}(b)).
\end{align*}

Letting $b_1:=\alpha_{w_1^{-1}}(b)$ we can use a similar argument to show that
\begin{align*}
\alpha_w(a) b &= \alpha_{w_1}(\alpha_{w_2w_3\cdots w_n}(a)b_1) \\
&= \alpha_{w_1w_2}(\alpha_{w_3\cdots w_n}(a) \alpha_{w_2^{-1}}(b_1)).
\end{align*}

Repeating this process, we see that $\alpha_w(a) b\in\alpha_w(\cA)$. By the Proposition \ref{prop.left.inv2}, the same argument can be made for multiplication by $b$ from the left. This proves that $\alpha_w(\cA)$ is an ideal. 

Now, for all $1\leq i,j\leq k$ with $i\neq j$, $\alpha_i(1)\alpha_j(1)=0$. For any two elements  $x,y\in\mathbb{F}_k^+$, we write $x=x_1x_2\cdots x_n$ and $y=y_1y_2\cdots y_m$. Without loss of generality, we may assume that $n\leq m$. There are two possibilities: if $x\mathbb{F}_k^+\cap y\mathbb{F}_k^+\neq\emptyset$, then it must be the case that $y=xp$ for some $p\in \mathbb{F}_k^+$ and $x\mathbb{F}_k^+\cap y\mathbb{F}_k^+=y\mathbb{F}_k^+$. In this case, Lemma \ref{lm.LCM.N.1} proves that $\alpha_x(1)\alpha_y(1)=\alpha_y(1)$. 

Otherwise, there must be an index $1\leq s\leq n$ such that $x_t=y_t$ for all $1\leq t< s$ but $x_s\neq y_s$. In this case, $\alpha_{x_s}(1)\alpha_{y_s}(1)=0$, and thus for every $a,b\in \cA$, 
\[\alpha_{x_s}(a)\alpha_{y_s}(b)=\alpha_{x_s}(a)\alpha_{x_s}(1)\alpha_{y_s}(1)\alpha_{y_s}(b)=0.\]
As a result, $\alpha_x(1)\alpha_y(1)=0$.

Therefore, the action $\alpha$ respects the right LCM and thus we have a right LCM dynamical system. The converse follows easily from  Proposition \ref{prop.equivLCM}.
\end{proof} 

In particular, the right LCM condition is easy to verify when $k=1$ and the semigroup $\mathbb{F}_k^+$ is reduced to $\mathbb{N}$. 

\begin{corollary}  An injective semigroup dynamical system $(\cA,\mathbb{N},\alpha)$ is a right LCM dynamical system if and only if $\alpha_1(\cA)$ is an ideal of $\cA$. 
\end{corollary}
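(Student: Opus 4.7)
The plan is to deduce this corollary directly from Proposition \ref{prop.LCM.N} by specializing to the case $k=1$. When $k=1$, the free semigroup $\mathbb{F}_1^+$ is (isomorphic to) $\mathbb{N}$, generated by the single element $1$, and the action $\alpha$ is completely determined by the endomorphism $\alpha_1$ via $\alpha_n = \alpha_1^n$.

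Proposition \ref{prop.LCM.N} gives two criteria for $(\cA,\mathbb{F}_k^+,\alpha)$ to be a right LCM dynamical system: that each $\alpha_i(\cA)$ is an ideal, and that $\alpha_i(1)\alpha_j(1) = 0$ for $i \neq j$. In the case $k=1$ the index set is a singleton, so there are no pairs $(i,j)$ with $i\neq j$ and the second condition is vacuously satisfied. The first condition reduces to the single requirement that $\alpha_1(\cA)$ be an ideal of $\cA$. This gives both directions of the equivalence at once.

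There is essentially no obstacle here: the work has already been done in Proposition \ref{prop.LCM.N}. The only subtlety worth mentioning is that this specialization is consistent with the intrinsic structure of $\mathbb{N}$ as a right LCM semigroup, namely that $n\mathbb{N} \cap m\mathbb{N} = \max(n,m)\mathbb{N}$ is always non-empty, so the condition one has to check for $p,q\in \mathbb{N}$ is only the non-empty-intersection case of Definition \ref{df.respectLCM}, which Lemma \ref{lm.LCM.N.1} handles automatically. Thus the proof amounts to one sentence invoking Proposition \ref{prop.LCM.N} together with the observation that the distinct-generator condition is vacuous when $k=1$.
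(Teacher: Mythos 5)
Your proposal is correct and matches the paper exactly: the corollary is stated as an immediate specialization of Proposition \ref{prop.LCM.N} to $k=1$, where the orthogonality condition $\alpha_i(1)\alpha_j(1)=0$ for $i\neq j$ is vacuous and only the ideal condition on $\alpha_1(\cA)$ remains. Your side remark that every pair in $\mathbb{N}$ has a common upper bound, so only the non-empty-intersection case of Definition \ref{df.respectLCM} arises, is accurate and consistent with how Lemma \ref{lm.LCM.N.1} is used in the proposition's proof.
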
 

Next, we construct a non-trivial example of a dynamical system over an AF-algebra. One may refer to \cite{KenCStarByExample} for  the basic background on AF-algebras and Bratteli diagrams. 

\begin{example} Consider the AF-algebra $\cA$ with the following Bratteli diagram:


 \begin{equation}
   \begin{tikzpicture}

	\draw[->] (1.25,0) -- (3.25,1);
	\draw[->] (1.25,0) -- (3.25,-1);
	
	\draw[->] (3.75,1) -- (5.75,1.5);
	\draw[->] (3.75,1) -- (5.75,0.5);
	\draw[->] (3.75,-1) -- (5.75,-0.5);
	\draw[ ->] (3.75,-1) -- (5.75,-1.5);

    \node at (1,0) {$2$};
    
    \node at (3.5,1) {$2$};
    \node at (3.5,-1){$2$};
    
    \node at (6,1.5){$2$};
    \node at (6,0.5){$2$};
    \node at (6,-0.5){$2$};
    \node at (6,-1.5){$2$};
    \node at (6.6,0) {$\cdots$};

 \label{fig:Bratteli}
    \end{tikzpicture}
    \end{equation}
and notice that  $\cA$ is isomorphic to $C(X)\otimes M_2$, where $X$ is the Cantor set. 

Fix $n$ momentarily and write $0$ for the zero matrix in $\oplus_{i=1}^{2^n} M_2$. The map $\alpha_1: \oplus_{i=1}^{2^n} M_2 \to \oplus_{i=1}^{2^{n+1}} M_2$ defined by $\alpha_1(a)=a\oplus 0$ 
is a $\ast$-endomorphism of $\cA$. One can easily verify that its image, corresponding to the upper branch of the Bratteli diagram, is an ideal of $\cA$. Therefore, the dynamical system $(\cA, \mathbb{N},\alpha)$ is a right LCM dynamical system. Here, the map $\alpha_1$ exploits the self-similar property of the given Bratteli diagram. 

One can also define $\alpha_2(a)=0\oplus a$ and an $\mathbb{F}_2^+$ action by sending each generator $e_i$ to $\alpha_i$. Since the image of $\alpha_i$ are ideals and $\alpha_1(1)=1\oplus 0$ is orthogonal to $\alpha_2(1)=0\oplus 1$, $(\cA, \mathbb{F}_2^+,\alpha)$ is a right LCM dynamical system by Proposition \ref{prop.LCM.N}. We shall study dilation results for such systems in Example \ref{ex.dilation.Fk}. 
\end{example}

\subsection{Covariant representations}

To study operator algebras associated with dynamical systems, it is useful to have a  presentation in terms of a convenient notion of  covariance that encodes the dynamics. We define one that is suitable for right LCM dynamical systems.

\begin{definition}\label{df.CovariantPair} An {\em isometric covariant representation} $(\pi,V)$ of a right LCM dynamical system $(\cA,P,\alpha)$ is given by
\begin{enumerate}
\item a unital $\ast$-homomorphism  $\pi:\cA\to\bh{H}$ and
\item an isometric representation $V:P\to\bh{H}$
\end{enumerate}
such that for every $p\in P$ and $a\in \cA$, 
\[V(p) \pi(a) V(p)^* = \pi(\alpha_p(a)).\] 
\end{definition}

\begin{definition}\label{df.ContractiveCovariantPair} A {\em contractive covariant representation} $(\phi,T)$ of a right LCM dynamical system $(\cA,P,\alpha)$ is given by
\begin{enumerate}
\item a unital $\ast$-preserving linear map $\phi:\cA\to\bh{H}$ and
\item a {contractive representation} $T:P\to\bh{H}$
\end{enumerate}
such that for every $p\in P$ and $a\in \cA$, 
\[T(p) \phi(a) T(p)^* = \phi(\alpha_p(a)).\] 
\end{definition}

\begin{remark} There are several different notions one could have used to define contractive covariant representations of a semigroup dynamical system. For instance, in the study of semicrossed product algebras by abelian lattice ordered semigroups \cite{DFK2014} (see also \cite{Peters1984}), the requirement is usually \[\phi(a) T(p) = T(p) \phi(\alpha_p(a)).\]
The commutativity of the semigroup is required in this definition. For non-abelian semigroups, one may also consider the covariance condition
 \[T(p) \phi(a)  =  \phi(\alpha_p(a))T(p).\]
\end{remark} 

\begin{remark} If $(\pi, V)$ is  an isometric covariant representation, then 
$$V(p)^* \pi(a) V(p) = \pi(\alpha_{p^{-1}} (a))$$ for all
 $p\in P$ and $a\in\cA$. Indeed, 
\begin{align*}
V(p)^* \pi(a) V(p) &= V(p)^* V(p)\pi(1) V(p)^* \pi(a) V(p) \\
&= V(p)^* \pi(\alpha_p(1)) \pi(a) V(p)  \\
&= V(p)^* \pi(\alpha_p(1)a)  V(p) \\
&= V(p)^* \pi(\alpha_p(\alpha_{p}^{-1}(\alpha_p(1)a)))  V(p) \\
&= V(p)^* V(p) \pi(\alpha_{p^{-1}}(a)) V(p)^* V(p) = \pi(\alpha_{p^{-1}} (a)).
\end{align*}
However, the analogous property may fail for general contractive covariant representations because 
$T(p)^*T(p)$ may not be the identity and the map $\phi$ may not be multiplicative.
\end{remark} 

\begin{remark} If the action $\alpha$ is $\ast$-automorphic, then $\alpha_p(1)=1=V(p)V(p)^*$ for all $p\in P$. So every  isometric  covariant representation $(\pi,V)$ is in fact a unitary representation of $P$.
\end{remark} 

\begin{definition} Let $(\phi,T)$  be a contractive covariant representation of the right LCM dynamical system $(\cA,P,\alpha)$ on a Hilbert space $\cH$. An {\em isometric covariant dilation} of $(\phi,T)$  is an isometric covariant representation $(\pi, V)$ on a Hilbert space $\cK$ containing $\cH$, such that
\begin{enumerate}
\medskip \item 
$\phi(a) = P_\cH \pi(a)\Big|_\cH$\ for every  $a\in \cA$, and
\medskip \item 
$T(p) = P_\cH V(p)\Big|_\cH $ \ for every $p\in P$.
\end{enumerate}
In other words, $(\phi,T)$ is the compression of $(\pi,V)$ to the subspace $\cH$ of $\cK$.
\end{definition} 

Let $(\pi,V)$ be an isometric covariant representation of a right LCM dynamical system $(\cA, P,\alpha)$ on $\bh{K}$. Suppose $\cH\subseteq \cK$ is co-invariant for $V$ (this means that $\cH$ is invariant for all the $V(p)^*$ with $p\in P$). Then we can build a contractive covariant pair $(\phi,T)$ by compressing $(\pi,V)$ to the corner of $\cH$. That is, we define $\phi:\cA\to\bh{H}$ and $T:P\to\bh{H}$  by 

\[\phi(a):=P_\cH \pi(a)\bigg|_\cH \qquad \text{and} \qquad T(p):=P_\cH V(p) \bigg|_\cH.\]
The operators $\pi(a)$ and $V(p)$, when written as  $2\times 2$ operator matrices with respect to $\cH\oplus \cH^\perp$ have the form
\[
\pi(a) =\begin{bmatrix} \phi(a) & * \\ * & * \end{bmatrix}  \qquad \text{and} \qquad V(p) = \begin{bmatrix} T(p) & 0 \\ * & * \end{bmatrix}
\]
where the $0$ in the upper right hand corner for $V(p)$ reflects the fact that  $\cH$ is invariant for $V(p)^*$.
Therefore, keeping track of the upper left corner of the matrix, which is the only relevant one for the compression, we compute
\begin{align*}
\begin{bmatrix} \phi(\alpha_p(a)) & * \\ * & * \end{bmatrix} &= \pi(\alpha_p(a))  \\
&= V(p)\pi(a)V(p)^* \\
&= \begin{bmatrix} T(p) & 0 \\ * & * \end{bmatrix} \begin{bmatrix} \phi(a) & * \\ * & * \end{bmatrix} \begin{bmatrix} T(p)^* & * \\ 0 & * \end{bmatrix} \\
&= \begin{bmatrix} T(p)\phi(a)T(p)^* & * \\ * & * \end{bmatrix},
\end{align*}
 where we used the covariance of $(\pi,V)$ in the second line. Hence, after compressing to $\cH$, we get $T(p)\phi(a)T(p)^*=\phi(\alpha_p(a))$. Moreover, one can easily verify that $\phi$ is a unital $\ast$-preserving linear map (not necessarily multiplicative) and $T$ is a representation of $P$. Therefore, we have showed that the
 compression of  an isometric covariant representation $(\pi,V)$ to a subspace that is co-invariant for $V$ 
is a contractive covariant representation $(\phi,T)$, of which the given $(\pi,V)$ is obviously a covariant isometric dilation. 

The main  focus of this paper is the reverse process:
{\em When does a given contractive covariant representation $(\phi,T)$ of a right LCM dynamical system $(\cA, P,\alpha)$
 have an isometric covariant dilation $(\pi,V)$?}
A moment's thought reveals that to be realized as the compression of a $\ast$-homomorphic representation $\pi$, the given  $\phi$ would 
have to be a unital completely positive map to begin with.  As it turns out,  this is all we need to assume in order to guarantee there exists an isometric covariant dilation. Before we prove this, in Theorem \ref{thm.dym.dilation}, we need to develop the necessary machinery about dilations in the next section.  

\section{Naimark Dilation on Semigroup Dynamical Systems}\label{sect.Naimark}
The goal of this section is to prove a generalization of Naimark's dilation theorem valid  for semigroup dynamical systems. Throughout the section, we only require that $P$ be a left-cancellative semigroup. In particular we 
do not assume that the semigroup is right LCM or even if it is, that the dynamical system respects the LCM structure. 

 A \emph{Toeplitz kernel} for a left-cancellative semigroup $P$ is a map $K:P\times P\to\bh{H}$ such that 
 \[
 K(e,e)=I, \quad K(p,q)=K(q,p)^*, \quad \text{and} \quad K(rp,rq)=K(p,q)
 \] for all $p,q,r\in P$. A Toeplitz kernel is \emph{positive definite} if for any choice of $p_1,\cdots, p_n\in P$, the operator matrix $[K(p_i, p_j)]$ is positive. The original Naimark dilation establishes that positive definite Toeplitz kernels arise from certain isometric representations. The following generalization of Naimark's theorem is due to Popescu. 

\begin{theorem}[Theorem 3.2 of \cite{Popescu1999b}] Let $P$ be a left-cancellative unital semigroup and let $K$ be a unital kernel for $P$ on the Hilbert space $\cH$. Then $K$ is a positive definite Toeplitz  kernel if and only if there exists a (minimal) isometric representation $V:P\to\bh{K}$ on some Hilbert space $\cK\supset\cH$, such that for every $p,q\in P$,
\[ K(p,q)=P_\cH V(p)^* V(q) \Big|_\cH \quad\text{ and }  \quad \overline{\operatorname{span}} \{V(p)h: p\in P, \ h\in \cH\} = \cK.\]
The minimal dilation is unique up to unitary equivalence. 
\end{theorem}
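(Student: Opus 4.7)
\medskip

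\noindent\textbf{Proof proposal.}
The easy direction is routine. Suppose an isometric representation $V:P\to\bh{K}$ realises $K(p,q)=P_\cH V(p)^*V(q)|_\cH$. Then $K(e,e)=I_\cH$ is immediate; $K(p,q)^*=K(q,p)$ follows by taking adjoints; and the Toeplitz property $K(rp,rq)=K(p,q)$ uses $V(rp)^*V(rq)=V(p)^*V(r)^*V(r)V(q)=V(p)^*V(q)$, which is where isometry of $V(r)$ enters. For positive definiteness one computes
\[
\sum_{i,j}\langle K(p_i,p_j)h_j,h_i\rangle=\Big\|P_\cH\sum_i V(p_i)h_i\Big\|^2\ge 0
\]
for any choice of $p_i\in P$ and $h_i\in\cH$. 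Actually the cleaner identity is $\sum_{i,j}\langle K(p_i,p_j)h_j,h_i\rangle=\langle\sum_j V(p_j)h_j,\sum_i V(p_i)h_i\rangle$ after unfolding the compression; this also shows $K(p,q)=P_\cH V(p)^*V(q)|_\cH$ acts on $\cH$ and not on some larger subspace, since $\cH\subset\cK$.

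For the converse, I will run a GNS-type construction. Let $\cV_0$ denote the vector space of finitely supported functions $f:P\to\cH$, written as formal sums $\sum_p\delta_p\otimes h_p$. On $\cV_0$ define a sesquilinear form by
\[
\Big\langle\sum_p\delta_p\otimes h_p\ ,\ \sum_q\delta_q\otimes k_q\Big\rangle:=\sum_{p,q}\langle K(q,p)h_p,k_q\rangle.
\]
Positive definiteness of $K$ says exactly that this form is positive semidefinite. Quotient by its null space $\cN$ and complete to obtain a Hilbert space $\cK$, and identify $\cH$ with its image in $\cK$ under $h\mapsto\delta_e\otimes h$, which is isometric because $K(e,e)=I$. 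For each $p\in P$ define $V(p)$ on the dense subspace $\cV_0/\cN$ by
\[
V(p)\Big(\sum_q\delta_q\otimes h_q\Big)=\sum_q\delta_{pq}\otimes h_q.
\]

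The main technical point, and the one on which everything hinges, is showing that $V(p)$ is a well-defined isometry on $\cV_0/\cN$. Both well-definedness and isometry follow from the computation
\[
\Big\|V(p)\sum_i\delta_{q_i}\otimes h_i\Big\|^2=\sum_{i,j}\langle K(pq_j,pq_i)h_i,h_j\rangle=\sum_{i,j}\langle K(q_j,q_i)h_i,h_j\rangle=\Big\|\sum_i\delta_{q_i}\otimes h_i\Big\|^2,
\]
where the Toeplitz identity $K(pq_j,pq_i)=K(q_j,q_i)$ does the key work; left-cancellation of $P$ ensures $pq_i=pq_j\Leftrightarrow q_i=q_j$, so no collapse of indices occurs. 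Thus $V(p)$ extends to an isometry on $\cK$; the identity $V(p)V(q)=V(pq)$ is immediate from the definition, and $V(e)=I$. Minimality is built in since the image of $\cV_0$ is dense and consists of finite sums of vectors $V(p)h$. Finally,
\[
\langle V(p)^*V(q)k,h\rangle_\cK=\langle\delta_q\otimes k,\delta_p\otimes h\rangle_\cK=\langle K(p,q)k,h\rangle_\cH
\]
for $h,k\in\cH$, giving the required compression formula.

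For uniqueness, let $(V,\cK)$ and $(V',\cK')$ be two minimal isometric dilations of $K$. Define $U$ on the dense set $\{\sum_i V(p_i)h_i\}\subset\cK$ by $U\sum_i V(p_i)h_i=\sum_i V'(p_i)h_i$. Both sides have the same norm, namely $\sum_{i,j}\langle K(p_i,p_j)h_j,h_i\rangle$, so $U$ is well defined and extends to a unitary $\cK\to\cK'$ fixing $\cH$ pointwise and intertwining $V$ with $V'$. I expect the sole nontrivial hurdle to be the well-definedness/isometry step for $V(p)$; after that everything is formal.
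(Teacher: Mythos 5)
Your proposal is correct, and it is essentially the standard Naimark/GNS construction: the paper itself only cites Popescu for this statement, but its own generalization (Theorem \ref{thm.kernel.dilation}) is proved by exactly the same scheme you use — a pre-inner product on finitely supported $\cH$-valued functions on $P$, quotient by the null space, shift operators made isometric by the Toeplitz identity, and the same intertwining-unitary argument for uniqueness. The only blemish is the first display in your easy direction, where the stray $P_\cH$ should not appear; your ``cleaner identity'' in the next sentence is the correct one.
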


\subsection{Kernel systems}

We first define and study kernel systems associated to semigroup dynamical systems.  These  prove to be an essential tool in dilation. The kernel system defined here can be seen as a strengthened version of positive definite Toeplitz kernels on semigroups, as in the traditional Naimark dilation.

When  $(\cA,P,\alpha)$ is a semigroup dynamical system, we define $\cA_{p,q}=\{\alpha_p(1) a \alpha_q(1): a\in \cA\}$ to be the `off diagonal'  corner of the $\mathrm{C}^*$-algebra $\cA$ corresponding to $p,q \in P$. One can easily verify that if $a\in \cA_{p,q}$, then $a^*\in \cA_{q,p}$. 

\begin{remark} Notice that when $(\cA,P,\alpha)$ is a right LCM dynamical system, then $\cA_{p,q}=\alpha_p(\cA)\cap \alpha_q(\cA)=\alpha_p(\cA)\alpha_q(\cA)$. This is because in this case $\alpha_p(\cA),\alpha_q(\cA)$ are ideals so that for any $a\in \cA$, $\alpha_p(1)a\alpha_q(1)\in \alpha_p(\cA)\cap \alpha_q(\cA)$. Conversely, if $a\in \alpha_p(\cA)\cap \alpha_q(\cA)$, then $a=\alpha_p(1)a\alpha_q(1)\in \cA_{p,q}$. 
\end{remark} 
\begin{lemma} \label{lem:oldremark4.4}  Suppose $(\cA,P,\alpha)$ is a semigroup dynamical system. Then  $\alpha_r(\cA_{p,q})\subseteq \cA_{rp,rq}$ for every $p,q, r\in P$. Moreover, $\alpha_r(\cA_{p,q})= \cA_{rp,rq}$ for every $p,q,r\in P$ if and only if 
  $\alpha_r(\cA_{e,e})$ is hereditary for every $r\in P$.
\end{lemma}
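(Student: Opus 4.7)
The plan is to verify both parts by direct computation, using only that $\alpha_r$ is an injective $\ast$-endomorphism with $\alpha_r(1)$ a projection.

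First, for the inclusion $\alpha_r(\cA_{p,q}) \subseteq \cA_{rp,rq}$, I would take a typical element $\alpha_p(1) a \alpha_q(1) \in \cA_{p,q}$ and compute
\[
\alpha_r\bigl(\alpha_p(1)\, a\, \alpha_q(1)\bigr) \;=\; \alpha_{rp}(1)\,\alpha_r(a)\,\alpha_{rq}(1),
\]
which visibly lies in $\cA_{rp,rq}$. This is the routine direction and should require no further work.

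For the equivalence, the key observation is that when $p=q=e$ the identity $\alpha_r(\cA_{e,e}) = \cA_{re,re}$ reduces to $\alpha_r(\cA) = \alpha_r(1)\cA\alpha_r(1)$, i.e.\ $\alpha_r(\cA)$ is the corner of $\cA$ cut by the projection $\alpha_r(1)$. Corners are always hereditary, so this specialization establishes the ``only if'' direction immediately.

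For the ``if'' direction, I would assume that $\alpha_r(\cA)$ is hereditary and show $\cA_{rp,rq} \subseteq \alpha_r(\cA_{p,q})$. Starting from an arbitrary $\alpha_{rp}(1)\, b\, \alpha_{rq}(1) \in \cA_{rp,rq}$, I rewrite it as $\alpha_r(\alpha_p(1))\, b\, \alpha_r(\alpha_q(1))$. Since $\alpha_r(\alpha_p(1))$ and $\alpha_r(\alpha_q(1))$ both lie in the hereditary subalgebra $\alpha_r(\cA)$, the whole expression belongs to $\alpha_r(\cA)$, so there exists $c \in \cA$ with $\alpha_r(c) = \alpha_r(\alpha_p(1))\, b\, \alpha_r(\alpha_q(1))$. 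The remaining step is to check that $c$ can be chosen in $\cA_{p,q}$: multiplying on both sides by the projections $\alpha_r(\alpha_p(1))$ and $\alpha_r(\alpha_q(1))$ yields $\alpha_r(\alpha_p(1)\, c\, \alpha_q(1)) = \alpha_r(c)$, and injectivity of $\alpha_r$ forces $c = \alpha_p(1)\, c\, \alpha_q(1) \in \cA_{p,q}$, as needed.

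No step looks particularly delicate; the only subtlety is making sure in the ``if'' direction that the preimage $c$ actually lies in $\cA_{p,q}$, and this is precisely where injectivity of $\alpha_r$ (a standing assumption on our systems) and the fact that $\alpha_r(\alpha_p(1))$, $\alpha_r(\alpha_q(1))$ are projections do the work.
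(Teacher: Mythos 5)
Your proposal is correct. The first assertion and the ``only if'' direction are exactly the paper's argument: apply $\alpha_r$ to a typical element $\alpha_p(1)a\alpha_q(1)$, and specialize to $p=q=e$ to see that $\alpha_r(\cA)=\alpha_r(1)\cA\alpha_r(1)$ is a corner, hence hereditary. For the converse you take a slightly different (but equally valid) route. The paper uses the fact that a hereditary $\mathrm{C}^*$-subalgebra whose unit is the projection $\alpha_r(1)$ must equal the corner $\alpha_r(1)\cA\alpha_r(1)$ (citing Murphy), and then computes the set identity $\alpha_r(\cA_{p,q})=\alpha_{rp}(1)\alpha_r(1)\cA\alpha_r(1)\alpha_{rq}(1)=\cA_{rp,rq}$ in one line, with no appeal to injectivity. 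You instead invoke the characterization that $B$ hereditary means $B\cA B\subseteq B$, place $\alpha_{rp}(1)\,b\,\alpha_{rq}(1)$ inside $\alpha_r(\cA)$, and then use injectivity of $\alpha_r$ together with the projections $\alpha_{rp}(1)=\alpha_r(\alpha_p(1))$ and $\alpha_{rq}(1)=\alpha_r(\alpha_q(1))$ to force the preimage $c$ to satisfy $c=\alpha_p(1)c\alpha_q(1)\in\cA_{p,q}$. Both characterizations of hereditarity are standard; the paper's version is marginally more economical since it avoids the preimage argument, while yours makes explicit where the standing injectivity hypothesis enters. Either way the lemma is proved.
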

\begin{proof}
For the first assertion, take $a=\alpha_p(1) b \alpha_q(1)\in\cA_{p,q}$ and notice that
\[\alpha_r(a)=\alpha_{rp}(1) \alpha_r(b) \alpha_{rq}(1)\in \cA_{rp,rq}.\]
 If equality holds for every $r,p,q$, then, then in particular, $\alpha_r(\cA)=\alpha_r(\cA_{e,e})=\cA_{r,r} =\alpha_r(1)\cA\alpha_r(1)$, hence 
$\alpha_r(\cA)$ is hereditary by \cite[Lemma 4.1]{Murphy1996}. Conversely, assume $\alpha_r(\cA)$ is hereditary for every $r$; then
\[\alpha_r(\cA_{p,q}) = \alpha_{rp}(1) \alpha_r(\cA) \alpha_{rq}(1) 
= \alpha_{rp}(1) \alpha_r(1) \cA \alpha_r(1) \alpha_{rq}(1) = \cA_{rp,rq},\]
and equality holds.
\end{proof}
We  use $e$ for the identity element in the semigroup, $1$ for the unit of the $\mathrm{C}^*$-algebra $\cA$, and
$I$ for the identity operator on a Hilbert space. 

\begin{definition}\label{df.kernel} 
Let $(\cA,P,\alpha)$ be a semigroup dynamical system and define 
\[\Lambda_{(\cA,P,\alpha)}=\{(p,a,q): p,q\in P, a\in \cA_{p,q}\}.\]
A \emph{kernel system} for $(\cA,P,\alpha)$ on a Hilbert space $\cH$ is  a map $K:\Lambda_{(\cA,P,\alpha)}\to\bh{H}$. We say that $K$ is 
\begin{enumerate}
\item \emph{unital} if $K(e,1,e)=I$,
\item \emph{Hermitian} if $K(p,a,q)^*=K(q,a^*,p)$,
\item \emph{Toeplitz} if $K(p, a, q)=K(rp, \alpha_r(a), rq)$ (notice that $(rp, \alpha_r(a), rq)  \in \Lambda_{(\cA,P,\alpha)}$ by Lemma~\ref{lem:oldremark4.4}),
\item \emph{linear} if $K(p, a,q)+\lambda K(p,b,q)=K(p, a+\lambda b,q)$ for every $a,b\in \cA_{p,q}$ and $\lambda\in\mathbb{C}$,
\item \emph{positive} if for every choice $p_1,\cdots, p_n\in P$ and $a_1,\cdots, a_n\in \cA$ with $a_i\in \cA \alpha_{p_i}(1)$, we have 
\[ [K(p_i, a_i^* a_j, p_j)]\geq 0,\]
\item \emph{bounded} if for any $p_1,\cdots, p_n\in P$ and $a_1,\cdots, a_n\in \cA$ with $a_i\in \cA \alpha_{p_i}(1)$, and for every $a\in A$, 
 \[ [K(p_i, a_i^* a^* a a_j, p_j)]\leq \|a\|^2 [K(p_i, a_i^* a_j, p_j)],\]
\item \emph{continuous} if for every $p,q\in P$, and every sequence $(a_n)$ in $\cA_{p,q}$ that converges to $a\in \cA_{p,q}$, then $K(p,a_n,q)$ also converges to $K(p,a,q)$ (in norm). 
\end{enumerate}
\end{definition}

\begin{lemma}\label{lm.bounded} Every linear positive kernel system $K$ is bounded.
\end{lemma}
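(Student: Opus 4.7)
The plan is to mimic the standard Kadison–Schwarz-type trick that shows positive kernels are automatically contractive in the appropriate sense. Given $p_1,\dots,p_n\in P$ and $a_1,\dots,a_n\in\cA$ with $a_i\in\cA\alpha_{p_i}(1)$, and given any $a\in\cA$, I want to manufacture new ``test elements'' whose positivity inequality, after expansion via linearity, rearranges into precisely the bounded inequality.

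The natural choice is to set $c=(\|a\|^2 1 - a^*a)^{1/2}\in\cA$ (a positive element by functional calculus) and put $d_i := c a_i$. First I would check that these $d_i$ are admissible: since $a_i\in\cA\alpha_{p_i}(1)$, we have $d_i = c a_i \alpha_{p_i}(1)\in\cA\alpha_{p_i}(1)$, so the family $\{d_i\}$ is of the required form to apply positivity of $K$. Consequently
\[
[K(p_i,\,d_i^*d_j,\,p_j)]\ \geq\ 0.
\]

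Next I expand the entries:
\[
d_i^* d_j \;=\; a_i^*(\|a\|^2 1-a^*a)a_j \;=\; \|a\|^2 a_i^*a_j \;-\; a_i^* a^* a\, a_j.
\]
Both summands lie in $\cA_{p_i,p_j}=\alpha_{p_i}(1)\cA\alpha_{p_j}(1)$ because $a_i=a_i\alpha_{p_i}(1)$ and $a_j=a_j\alpha_{p_j}(1)$, so linearity of $K$ applies entrywise and gives
\[
[K(p_i,d_i^*d_j,p_j)] \;=\; \|a\|^2 [K(p_i,a_i^*a_j,p_j)] \;-\; [K(p_i,a_i^*a^*a\,a_j,p_j)].
\]
Combined with the previous positivity this is exactly the bounded inequality. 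The key subtlety — and the only non-routine observation — is ensuring the ``off-diagonal'' algebraic identity $d_i^*d_j = \|a\|^2 a_i^*a_j - a_i^*a^*a\,a_j$ stays inside $\cA_{p_i,p_j}$ so that linearity is legitimately applicable; this is immediate from the fact that each $a_i$ is already right-absorbed by $\alpha_{p_i}(1)$. No obstacle is expected; the lemma is essentially an operator-matrix Cauchy–Schwarz.
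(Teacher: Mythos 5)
Your proposal is correct and is essentially identical to the paper's own proof: the paper also sets $b_i=(\|a\|^2 I-a^*a)^{1/2}a_i$, notes that $b_i\in\cA\alpha_{p_i}(1)$, applies positivity to $[K(p_i,b_i^*b_j,p_j)]$, and uses linearity to expand into the bounded inequality. Your extra remark checking that both summands lie in $\cA_{p_i,p_j}$ is a harmless (and correct) elaboration of a point the paper leaves implicit.
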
 

\begin{proof} Let $p_1,\cdots, p_n\in P$, and for each $i = 1,2, \ldots n$ choose  $a_i\in \cA \alpha_{p_i}(1)$. For each $a\in\cA$, define $b_i =  (\|a\|^2I-a^*a)^{1/2} a_i$, which is again in $\cA \alpha_{p_i}(1)$. The positivity of $K$ implies that the operator matrix  
\[[K(p_i, b_i^* b_j, p_j)] = [K(p_i, a_i^* (\|a\|^2I - a^* a) a_j, p_j)]\]
is positive definite.
The linearity of $K$ implies that 
\[[K(p_i, a_i^* (\|a\|^2I - a^* a) a_j, p_j)] = \|a\|^2 [K(p_i, a_i^* a_j, p_j)] - [K(p_i, a_i^* a^* a a_j, p_j)] \geq 0,\]
which gives  the desired inequality, proving  that $K$ is bounded.
\end{proof} 

\begin{lemma}\label{lm.cts} If $K$ is a unital, Hermitian, Toeplitz, linear, positive kernel system, then  
$\|K(p,a,q)\|\leq \|a\|$, for all $p,q\in P$ and $a\in \cA_{p,q}$, and thus $K$ is continuous. 
\end{lemma}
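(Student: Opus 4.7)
The plan is to bound $\|K(p,a,q)\|$ via a $2\times 2$ positive operator matrix argument, and then invoke the boundedness already established in Lemma \ref{lm.bounded} to control the diagonal entry. Continuity will then fall out of linearity.

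First, fix $p,q\in P$ and $a\in \cA_{p,q}$, and apply positivity with $n=2$, $p_1=p$, $p_2=q$, $a_1=\alpha_p(1)$, $a_2=a$. Certainly $a_1\in\cA\alpha_p(1)$, and $a\in\cA\alpha_q(1)$ because $a=\alpha_p(1)b\alpha_q(1)$ for some $b$ gives $a\alpha_q(1)=a$ by idempotency of $\alpha_q(1)$. The identities $\alpha_p(1)a=a$ and $a^*\alpha_p(1)=a^*$ (the adjoint of $\alpha_p(1)a=a$) let one compute the four entries $K(p_i,a_i^*a_j,p_j)$ and obtain
\[
\begin{pmatrix} K(p,\alpha_p(1),p) & K(p,a,q) \\ K(q,a^*,p) & K(q,a^*a,q) \end{pmatrix}\ \ge\ 0.
\]
Using the Toeplitz property with $r=p$ applied to $(e,1,e)$, the upper-left entry reduces to $K(e,1,e)=I$. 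Positivity of a block matrix with identity corner then yields $K(p,a,q)^*K(p,a,q)\le K(q,a^*a,q)$, hence $\|K(p,a,q)\|^2\le \|K(q,a^*a,q)\|$.

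Next, I would invoke Lemma \ref{lm.bounded} with $n=1$, $p_1=q$, $b_1=\alpha_q(1)$, and the element $a\in\cA$ itself playing the role of the operator being bounded, which gives $K(q,\alpha_q(1)a^*a\alpha_q(1),q)\le \|a\|^2 K(q,\alpha_q(1),q)=\|a\|^2 I$. Since $a\alpha_q(1)=a$, the left-hand side is just $K(q,a^*a,q)$, so $\|K(q,a^*a,q)\|\le\|a\|^2$. Combining with the previous estimate produces $\|K(p,a,q)\|\le\|a\|$. For continuity, note that $\cA_{p,q}$ is a linear subspace of $\cA$, so $a_n-a\in\cA_{p,q}$ whenever $a_n,a\in\cA_{p,q}$; linearity and the just-proved norm bound then give $\|K(p,a_n,q)-K(p,a,q)\|=\|K(p,a_n-a,q)\|\le\|a_n-a\|\to 0$.

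There is no substantive obstacle, only bookkeeping: one must verify the side conditions $a_i\in\cA\alpha_{p_i}(1)$ for the chosen coefficients and carefully use the defining relation $a=\alpha_p(1)a\alpha_q(1)$ together with the idempotency of the projections $\alpha_p(1),\alpha_q(1)$ to simplify the four entries of the positivity matrix. Once the matrix is identified, the standard Cauchy--Schwarz-type bound for positive block operator matrices and the single-index instance of Lemma \ref{lm.bounded} combine to give the result directly.
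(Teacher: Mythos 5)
Your proposal is correct and follows essentially the same route as the paper: the same $2\times 2$ positivity matrix with $a_1=\alpha_p(1)$, $a_2=a$, the same Toeplitz reduction of the $(1,1)$-entry to $I$, the same application of Lemma \ref{lm.bounded} to bound $K(q,a^*a,q)$ by $\|a\|^2 I$, and the same block-matrix Cauchy--Schwarz step (the paper cites \cite[Lemma 14.13]{NestAlgebra} for the inequality $K(p,a,q)^*K(p,a,q)\le \|a\|^2 I$, which is exactly your "identity corner" fact). The continuity argument via linearity is also the one the paper uses in the subsequent extension proposition.
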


\begin{proof} Fix $p,q\in P$ and $a\in \cA_{p,q}$. Let $a_1=\alpha_p(1)$ and $a_2=a$, the positivity assumption ensures that 
\[\begin{bmatrix}
K(p,\alpha_p(1),p) & K(p,a,q) \\
K(q, a^*, p) & K(q, a^*a, q) 
\end{bmatrix} \geq 0. \]

By Lemma \ref{lm.bounded}, $K$ is also bounded so that 
\[K(q,a^*a,q)\leq \|a\|^2 K(q,\alpha_q(1),q)=\|a\|^2 K(e,I,e)=\|a\|^2 I.\]
Also note that $K(p,\alpha_p(1),p)=K(e,I,e)=I$ because $K$ is Toeplitz. Therefore,
\[\begin{bmatrix}
K(p,\alpha_p(1),p) & K(p,a,q) \\
K(q, a^*, p) & \|a\|^2 I 
\end{bmatrix}
\geq 
\begin{bmatrix}
K(p,\alpha_p(1),p) & K(p,a,q) \\
K(q, a^*, p) & K(q, a^*a, q) 
\end{bmatrix} \geq 0 \]
This can only happen when $K(q,a^*,p)K(p,a,q)=K(p,a,q)^* K(p,a,q)\leq \|a\|^2 I$ (by using for example \cite[Lemma 14.13]{NestAlgebra}). Therefore, $\|K(p,a,q)\|\leq \|a\|$ as desired. \end{proof} 

It is often easier to verify the properties of a kernel system on a dense subset of $\cA$; the following proposition shows that it is also enough.

\begin{proposition} Suppose $(\cA,P,\alpha)$ is a semigroup dynamical system and let $\cA_0$ be a unital dense $\ast$-subalgebra of $\cA$ that is invariant under $\alpha$. Then every unital, Hermitian, Toeplitz, linear, positive kernel system $K_0$ on $(\cA_0,\alpha,P)$ can be extended to a unital, Hermitian, Toeplitz, linear, positive kernel system $K$ on $(\cA,\alpha,P)$. 
\end{proposition}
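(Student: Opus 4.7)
The plan is to extend $K_0$ by norm-continuity, using the key observation that Lemma~\ref{lm.cts}, whose proof only invokes the listed kernel-system axioms, applies verbatim to $K_0$ and therefore gives $\|K_0(p,a,q)\|\le\|a\|$ for every $a\in(\cA_0)_{p,q}$.

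First I will show that $(\cA_0)_{p,q}=\alpha_p(1)\cA_0\alpha_q(1)$ is norm-dense in $\cA_{p,q}=\alpha_p(1)\cA\alpha_q(1)$ for every $p,q\in P$. Since $\cA_0$ is unital and $\alpha$-invariant, one has $\alpha_p(1),\alpha_q(1)\in\cA_0$, so given $b\in\cA$ and a sequence $b_n\in\cA_0$ with $b_n\to b$, $\alpha_p(1)b_n\alpha_q(1)\to\alpha_p(1)b\alpha_q(1)$ in norm. For each fixed pair $p,q$ the map $a\mapsto K_0(p,a,q)$ is then a bounded linear map (of norm at most one) from the dense subspace $(\cA_0)_{p,q}$ of the Banach space $\cA_{p,q}$ into $\bh{H}$, and so extends uniquely to a bounded linear map $K(p,\cdot,q)$ on all of $\cA_{p,q}$. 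Collecting these extensions produces a map $K\colon\Lambda_{(\cA,P,\alpha)}\to\bh{H}$ which is automatically linear in the middle argument, norm-continuous, and agrees with $K_0$ on $\Lambda_{(\cA_0,P,\alpha)}$.

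The remaining properties pass to the limit by routine continuity arguments. Unitality is inherited from $K_0$. For the Hermitian and Toeplitz identities, pick $b_n\in(\cA_0)_{p,q}$ with $b_n\to a$ and exploit respectively the norm-continuity of $b\mapsto b^*$ and of $\alpha_r$ (which is $\ast$-homomorphic, hence contractive), using Lemma~\ref{lem:oldremark4.4} and the $\alpha$-invariance of $\cA_0$ to keep the approximants in the correct off-diagonal corner. For positivity, given $p_1,\dots,p_n\in P$ and $a_i\in\cA\alpha_{p_i}(1)$, I choose $c_i^{(m)}\in\cA_0$ with $c_i^{(m)}\to a_i$ and set $b_i^{(m)}:=c_i^{(m)}\alpha_{p_i}(1)\in\cA_0\alpha_{p_i}(1)$; then $(b_i^{(m)})^*b_j^{(m)}\to a_i^*a_j$ in norm in $(\cA_0)_{p_i,p_j}$, and the positivity of each matrix $[K_0(p_i,(b_i^{(m)})^*b_j^{(m)},p_j)]$ is preserved under the entrywise norm limit, yielding $[K(p_i,a_i^*a_j,p_j)]\ge 0$.

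There is no serious obstacle: the argument is just extension of a bounded linear map by continuity, followed by the observation that each structural identity is stable under norm limits. The only point that needs a moment of care is the positivity step, where one has to arrange approximants of the specific form $c_i^{(m)}\alpha_{p_i}(1)$, rather than merely $c_i^{(m)}\in\cA_0$, so that both the hypothesis and the conclusion of the positivity axiom remain meaningful at every stage of the limiting process.
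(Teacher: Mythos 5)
Your proposal is correct and follows essentially the same route as the paper: use the Lipschitz bound $\|K_0(p,a,q)\|\le\|a\|$ from Lemma \ref{lm.cts} to extend $K_0$ by norm-continuity on each corner $\cA_{p,q}$, and then pass the unital, Hermitian, Toeplitz, linear and positivity axioms to the limit, handling positivity by approximating each $a_i$ within $\cA_0\alpha_{p_i}(1)$. Your extra care in forcing the approximants to have the form $c_i^{(m)}\alpha_{p_i}(1)$ is a welcome refinement of a point the paper glosses over, but the argument is the same.
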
 

\begin{proof} If $a_n\in\cA_{p,q}$ converges to $a\in \cA_{p,q}$, linearity and Lemma \ref{lm.cts} give
\[\|K_0(p,a_n,q)-K_0(p,a_m,q)\|=\|K_0(p,a_n-a_m,q)\|\leq \|a_n-a_m\|.\]
Therefore, $K_0(p,a_n,q)$ is a Cauchy sequence that converges in $\bh{H}$. Define $K(p,a,q) :=\lim_{n\to\infty} K_0(p,a_n,q)$. One can easily verify that $K$ is unital, Hermitian, Toeplitz (since the $\alpha_p$ are continuous $\ast$-endomorphisms), and linear. For positivity, pick $p_1,\cdots,p_n\in P$ and $a_1,\cdots,a_n\in\cA$ with $a_i\in \cA \alpha_{p_i}(1)$. For each $i$, pick a sequence $a_{i,n}$ in $\cA\alpha_{p_i}(1)$ that converges to $a_i$. Then $K_0(p_i, a_{i,n}^* a_{j,n}, p_j)$ converges to $K(p_i, a_i^* a_j, p_j)$, and since the matrix 
$[K_0(p_i, a_{i,n}^* a_{j,n}, p_j)]$ is positive  definite 
so is  $[K(p_i, a_i^* a_j, p_j)] $. This proves $K$ is positive. 
\end{proof}

\subsection{Naimark dilation for kernel systems}
Naimark dilation is a powerful tool in the study of dilation theory on semigroups because it explicitly constructs isometric representations of the semigroup via positive definite kernels \cite{Popescu1999b, BLi2014, BLi2019}. With the additional $\mathrm{C}^*$-algebra $\cA$ in our definition of the kernel system, we aim to establish an extended version of Naimark dilation theorem that explicitly constructs both a representation of the $\mathrm{C}^*$-algebra $\cA$ and an isometric representation of the semigroup $P$. 

The usual way of constructing a kernel is by compressing an isometric covariant representation to a subspace. The next proposition shows that this also works for kernel systems. We emphasize that only the weaker covariance condition is involved in the process.

\begin{proposition} 
Let $(\pi,V)$ be an isometric  representation of a semigroup dynamical system $(\cA, \alpha, P)$ on a Hilbert space $\cK$ that satisfies the (weak) covariance condition $\pi(\alpha_p(a))V(p)=V(p)\pi(a)$. Suppose $\cH$ is a subspace of $\cK$ and define $K:\Lambda_{(\cA,P,\alpha)}\to\bh{H}$ by 
\[K(p,a,q)=P_\cH V(p)^* \pi(a) V(q) \bigg|_\cH.\]
Then $K$ satisfies conditions (1) to (6) in  Definition \ref{df.kernel}.
\end{proposition}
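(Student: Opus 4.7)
The plan is to verify each of the six conditions more or less in order, with conditions (1), (2), (4) being essentially formal, (3) being where the covariance enters, (5) being a standard Gram-type argument, and (6) being a free consequence of (4) and (5) via Lemma~\ref{lm.bounded}.

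For (1), (2), and (4): unitality follows because $V(e) = I$ and $\pi(1) = I$ so $K(e,1,e) = P_\cH|_\cH = I$; the Hermitian property follows from taking the adjoint of the defining formula and using $\pi(a)^* = \pi(a^*)$; and linearity is immediate from linearity of $\pi$ and of compression. These are routine.

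For (3), the Toeplitz property, I would expand
\[K(rp,\alpha_r(a),rq) = P_\cH V(p)^* V(r)^* \pi(\alpha_r(a)) V(r) V(q)\bigg|_\cH,\]
and then use the weak covariance $\pi(\alpha_r(a))V(r) = V(r)\pi(a)$ together with the fact that $V(r)$ is an isometry so $V(r)^*V(r) = I$. This collapses the middle to $\pi(a)$ and yields $K(p,a,q)$. This is the one place the covariance is used, so it is the \emph{conceptual} core of the argument, though the computation is short.

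For (5), positivity, I would verify it by the standard quadratic-form computation. Given $p_1,\ldots,p_n \in P$, $a_i \in \cA\alpha_{p_i}(1)$, and $h_1,\ldots,h_n \in \cH$, note first that $a_i^* a_j = \alpha_{p_i}(1)a_i^* a_j \alpha_{p_j}(1) \in \cA_{p_i,p_j}$ so the kernel is defined, and then compute
\[\sum_{i,j}\langle K(p_i,a_i^* a_j,p_j)h_j,h_i\rangle = \sum_{i,j}\langle \pi(a_j)V(p_j)h_j,\pi(a_i)V(p_i)h_i\rangle = \Bigl\|\sum_j \pi(a_j)V(p_j)h_j\Bigr\|^2 \geq 0,\]
where the absorption of $P_\cH$ uses $h_i\in\cH$, and the factoring $\pi(a_i^*a_j) = \pi(a_i)^*\pi(a_j)$ uses that $\pi$ is a $*$-homomorphism. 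Finally, (6) is a direct consequence of (4), (5), and Lemma~\ref{lm.bounded}.

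The main obstacle, if any, is really a bookkeeping one: one must check at each step that the triples appearing on the right-hand side lie in $\Lambda_{(\cA,P,\alpha)}$, which is guaranteed by Lemma~\ref{lem:oldremark4.4} for the Toeplitz step and by the explicit cutdown by $\alpha_{p_i}(1)$ on both sides for the positivity step. Beyond that, the argument is mechanical, requiring only that $V$ be isometric, the weak covariance identity, and the $*$-homomorphism property of $\pi$.
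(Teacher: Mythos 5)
Your proposal is correct and follows essentially the same route as the paper: the same routine verifications for (1), (2), (4), the same cancellation $V(r)^*V(r)=I$ after invoking weak covariance for (3), the same Gram/quadratic-form argument for (5) (the paper phrases it as $R^*R\geq 0$ for the row operator $R=[\pi(a_1)V(p_1)\ \cdots\ \pi(a_n)V(p_n)]$, which is identical to your norm-squared computation), and the same appeal to Lemma \ref{lm.bounded} for (6).
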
 

\begin{proof} For condition (1), notice that  $K(e, 1, e)=P_\cH V(e)^* \pi(1) V(e)\bigg|_\cH$  is equal to  $I$ because $V$ is an isometric representation with $V(e)=I$ and  $\pi(1)=I$.

For condition (2), simply compute  \[K(p,a,q)^* = P_\cH (V(p)^* \pi(a) V(q))^* \bigg|_\cH = P_\cH V(q)^* \pi(a^*) V(p) \bigg|_\cH=K(q, a^*, p)\]

For condition (3), 
\[V(rp)^* \pi(\alpha_r(a)) V(rq) = V(p)^* V(r)^* \pi(\alpha_r(a)) V(r) V(q)=V(p)^* V(r)^*  V(r) \pi(a) V(q)\]
cancelling $ V(r)^*  V(r)= I$ and compressing down to $\cH$ gives $K(rp,\alpha_r(a),rq)= K(p,a,q)$. 

For condition (4), 
\[V(p)^*\pi(a+\lambda b) V(q)=V(p)^*\pi(a)V(q) + \lambda V(p)^* \pi(b) V(q),\]
again, compressing down to $\cH$ gives $K(p,a+\lambda b,q)=K(p,a,q)+\lambda K(p,b,q)$.

For condition (5), fix $p_1,\cdots, p_n\in P$ and $a_1,\cdots, a_n\in \cA$ with $a_i\in \cA\alpha_{p_i}(1)$, and define an $1\times n$ operator matrix $R$ to be 
\[R:=\begin{bmatrix} \pi(a_1) V(p_1) & \cdots & \pi(a_n) V(p_n) \end{bmatrix}. \]
Then,
\[R^* R=[V(p_i)^* \pi(a_i^* a_j) V(p_j)]\geq 0\]
Compressing down to $\cH$ proves that $[K(p_i, a_i^* a_j, p_j)]\geq 0$.

Finally, condition (6) follows from linearity, positivity, and Lemma \ref{lm.bounded}.
\end{proof} 

The extended Naimark dilation theorem tackles the converse. 

\begin{theorem}\label{thm.kernel.dilation} Let $K:\Lambda_{(\cA,P,\alpha)}\to\bh{H}$ be a kernel system that satisfies conditions (1) to (5) of  Definition \ref{df.kernel}. Then there exists a Hilbert space $\cK \supseteq \cH$,  an isometric representation $V:P\to\bh{K}$ and a $\ast$-homomorphism $\pi:\cA\to\bh{K}$ satisfying 
\[V(p) \pi(a) = \pi(\alpha_p(a)) V(p) \qquad \text{ for } p,q \in P, \  a\in \cA,\]
such that  \[K(p,a,q)=P_\cH V(p)^* \pi(a) V(q) \bigg|_\cH \qquad \text{ for } (p,a,q)\in\Lambda_{(\cA,P,\alpha)}.\]

Moreoever, $(\pi, V)$ can be taken to be minimal in the sense that 
\[\cK = \overline{\lspan}\{\pi(a) V(p)h: p\in P, a\in \cA\alpha_{p}(1), h\in \cH\subset\cK\}.\]
Such a minimal dilation $(\pi, V)$ is unique up to unitary equivalence.
\end{theorem}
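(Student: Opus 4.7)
My plan is to follow a GNS-type construction driven by the kernel $K$, which is the natural approach for this kind of extended Naimark theorem. The kernel will play the role of a positive sesquilinear form on a space of formal tensors indexed by the semigroup and the algebra, and the operators $V(p)$ and $\pi(a)$ will be produced as left multiplication on these tensors.

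Concretely, I form the algebraic vector space
\[
X_0 = \bigoplus_{p\in P} (\cA\alpha_p(1)) \otimes_{\mathrm{alg}} \cH,
\]
writing a typical elementary tensor as $a\otimes_p h$ with $a\in\cA\alpha_p(1)$ and $h\in\cH$, and I equip $X_0$ with the sesquilinear form
\[
\langle a\otimes_p h,\, b\otimes_q k\rangle := \langle K(p, a^*b, q)h, k\rangle_\cH.
\]
Note that $a^*b = \alpha_p(1)a^*b\alpha_q(1) \in \cA_{p,q}$, so this is well-defined. The positivity condition (5) on $K$ is precisely what makes this form positive semi-definite. Quotienting by the null space $\cN=\{\xi\in X_0 : \langle\xi,\xi\rangle=0\}$ and completing yields the Hilbert space $\cK$. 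I embed $\cH$ into $\cK$ by $h\mapsto [\,1\otimes_e h\,]$; this is isometric because $K(e,1,e)=I$ by unitality.

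Next I define the operators. For $p\in P$, set $V(p)(a\otimes_q h) = \alpha_p(a)\otimes_{pq} h$; note that $\alpha_p(a)\alpha_{pq}(1)=\alpha_p(a\alpha_q(1))=\alpha_p(a)$, so the image lies in the correct summand. The Toeplitz property gives
\[
\langle V(p)(a\otimes_q h), V(p)(b\otimes_{q'}k)\rangle
= \langle K(pq,\alpha_p(a^*b),pq')h,k\rangle = \langle K(q,a^*b,q')h,k\rangle,
\]
so $V(p)$ is isometric on $X_0$ and descends to an isometry on $\cK$. A direct computation shows $V(p)V(p')=V(pp')$ and $V(e)=I$, so $V$ is an isometric representation. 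For $c\in\cA$ set $\pi(c)(a\otimes_q h)=(ca)\otimes_q h$; linearity of $K$ together with boundedness (Lemma \ref{lm.bounded}, applied to the vectors $a_i = a_i$ inside $\cA\alpha_{p_i}(1)$) yields $\|\pi(c)\xi\|\leq \|c\|\|\xi\|$, so $\pi(c)$ extends to a bounded operator on $\cK$. The Hermitian property of $K$ gives $\pi(c)^*=\pi(c^*)$, multiplicativity is immediate, and $\pi(1)=I$; the covariance $V(p)\pi(a)=\pi(\alpha_p(a))V(p)$ follows from $\alpha_p(a)\alpha_p(b)=\alpha_p(ab)$. Finally, to verify the compression formula, for $h,k\in\cH$:
\[
\langle V(p)^*\pi(a)V(q)h,k\rangle
=\langle a\alpha_q(1)\otimes_q h,\,\alpha_p(1)\otimes_p k\rangle
=\langle K(p,\alpha_p(1)a\alpha_q(1),q)h,k\rangle
=\langle K(p,a,q)h,k\rangle,
\]
the last equality since $a\in\cA_{p,q}$. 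Minimality is built into the construction since vectors of the form $\pi(a)V(p)h = a\otimes_p h$ densely span $\cK$.

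Uniqueness of the minimal dilation proceeds by the standard intertwiner argument: if $(\pi', V', \cK')$ is another minimal dilation, the map $U:\pi(a)V(p)h\mapsto \pi'(a)V'(p)h$ is well-defined and preserves inner products on the dense subspaces (both inner products are computed by the same kernel $K$), so it extends to a unitary intertwining the two structures.

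The main obstacle I anticipate is the bookkeeping in checking that $V(p)$ descends from $X_0$ to the quotient $X_0/\cN$ and subsequently extends to the completion, in a way that simultaneously yields a genuine semigroup homomorphism and preserves the tensor-product structure when composed with $\pi$; in particular, one has to be careful that the sum $\sum_i a_i\otimes_{p_i}h_i$ can have the indices $p_i$ vary, so the sesquilinear form involves the full matrix $[K(p_i,a_i^*a_j,p_j)]$, and positivity of this matrix (condition (5)) is exactly what is needed. Once this is set up correctly, everything else reduces to term-by-term verification using the five listed properties of $K$.
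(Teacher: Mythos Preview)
Your proof is correct and follows essentially the same GNS/Kolmogorov-type construction as the paper's own proof: build a pre-inner product space from the kernel, quotient by the null vectors, complete, and let $V(p)$ and $\pi(a)$ act by left translation on the tensors. The only cosmetic difference is that you work with genuine algebraic tensor products $(\cA\alpha_p(1))\otimes_{\mathrm{alg}}\cH$, whereas the paper indexes by formal symbols $\delta_{p,a}\otimes h$ over the discrete set $\{(p,a): a\in\cA\alpha_p(1)\}$; the two spaces become identical after quotienting by the null space (linearity of $K$ in the middle variable collapses the extra freedom in the paper's version).
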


\begin{proof} 
Consider the discrete space $\cX := \{(p,a): p\in P, a\in\cA \alpha_p(1)\}$,  and 
write $\delta_{p,a} \otimes h$ for the (elementary) function that equals $h$ at $(p,a)$ and is zero everywhere else.  Let
\[\cK_0: =C_c(\cX; \cH) =\lspan\{\delta_{p,a}\otimes h: p\in P, a\in\cA \alpha_p(1), h\in\cH\}.
\]

If $(p,a)$ and $(q,b)$ are in $\cX$, then $b^* a \in \alpha_q(1) \cA \alpha_p(1)=\cA_{q,p}$, so we may  define  
\[\langle\delta_{p,a}\otimes h, \delta_{q,b}\otimes k\rangle := \langle K(q,b^*a, p) h, k\rangle\]
which can be extended uniquely to a sesquilinear form on $\cK_0$. This sesquilinear form is positive definite because for every $k=\sum_{i=1}^n \delta_{p_i, a_i}\otimes h_i$,
\begin{align*}
\langle k, k\rangle &= \sum_{i=1}^n \sum_{j=1}^n \langle K(p_j, a_j^* a_i, p_i) h_i, h_j\rangle \\
&= \left\langle [K(p_j, a_j^* a_i, p_i)] \begin{bmatrix} h_1 \\ \vdots \\ h_n\end{bmatrix}, \begin{bmatrix} h_1 \\ \vdots \\ h_n\end{bmatrix} \right\rangle\geq 0,
\end{align*}
where the last inner product is in $\cH^n$.
Define $\|k\|=\langle k,k\rangle^{1/2}$. The sesquilinear form $\langle\cdot, \cdot\rangle$ is a pre-inner product that satisfies the Cauchy-Schwarz inequality $|\langle k,h\rangle|^2\leq \|k\| \|h\|$. If we now let 
\[\cN:=\{k\in \cK_0: \|k\|=0\}=\{k\in\cK_0: \langle k,h\rangle =0\mbox{ for all }h\in\cK_0\},\] 
we see that $\cN$ is a linear subspace of $\cK_0$ and that $\langle \cdot,\cdot\rangle$  becomes an inner product on the quotient $\cK_0/\cN$.  The completion of $\cK_0/\cN$ with respect to the associated norm will be denoted 
$\cK$. On the image of ${\cK_0/ \cN}$ in this completion 
the inner product  is given by
\[\langle k+\cN, h+\cN\rangle = \langle k, h\rangle\]
and the norm is $\|k+\cN\|=\langle k+\cN, k+\cN\rangle^{1/2}$. 
Since
\[
\langle \delta_{e,1}\otimes h, \delta_{e,1}\otimes k\rangle = \langle K(e,1,e)h, k\rangle =\langle h, k\rangle,
\]
the Hilbert space $\cH$ naturally embeds in $\cK$ via $h\to \delta_{e,1}\otimes h$.

Notice that if $b\in \cA\alpha_q(1)$,  then $\alpha_p(b)\in \cA\alpha_{pq}(1)$, so that $(pq,\alpha_p(b)) \in \cX$ and we may define 
 $V(p):\cK_0\to\cK_0$ by \[V(p)\delta_{q,b}\otimes h = \delta_{pq,\alpha_p(b)}\otimes h,\] 
 for $p,q\in P$ and $b\in \cA\alpha_q(1)$.  The operator $V(p)$ is isometric because for every linear combination 
 $k=\sum_{i=1}^n \delta_{p_i, a_i}\otimes h_i$, 
\begin{align*}
\|V(p)k\|^2 &= \|\sum_{i=1}^n \delta_{pp_i, \alpha_p(a_i)}\otimes h_i\|^2 \\
&= \sum_{i=1}^n \sum_{j=1}^n \langle K(pp_j, \alpha_p(a_j)^* \alpha_p(a_i), pp_i) h_i, h_j\rangle \\
&= \sum_{i=1}^n \sum_{j=1}^n \langle K(pp_j, \alpha_p(a_j^* a_i), pp_i) h_i, h_j\rangle \\
&= \sum_{i=1}^n \sum_{j=1}^n \langle K(p_j, a_j^* a_i, p_i) h_i, h_j\rangle = \|k\|^2;
\end{align*}
where we have used the Toeplitz condition in the last line.

Noticing now that if $b\in \cA\alpha_q(1)$ and $a\in \cA$, then $ab\in\cA\alpha_q(1)$, so that $(q,ab) \in \cX$, we define a map
 $\pi(a): \cK_0\to\cK_0$ by first  letting
\[\pi(a) (\delta_{q,b}\otimes h)=\delta_{q,ab}\otimes h\] 
and then extending it by linearity.
In order to show that $\pi(a)$ is bounded, we take $k=\sum_{i=1}^n \delta_{p_i, a_i}\otimes h_i$ and  compute
\begin{align*}
\|\pi(a)k\|^2 &= \|\sum_{i=1}^n \delta_{p_i, a a_i}\otimes h_i\|^2 \\
&= \sum_{i=1}^n \sum_{j=1}^n \langle K(p_j, a_j^* a^*a a_i, p_i) h_i, h_j\rangle \\
&\leq \|a\|^2 \sum_{i=1}^n \sum_{j=1}^n \langle K(p_j, a_j^* a_i, p_i) h_i, h_j\rangle \\
&= \|a\|^2 \|k\|^2,
\end{align*}
where the inequality holds because the kernel $K$ is assumed to be bounded.

Since $V(p)$ and $\pi(a)$ are both bounded on $\cK_0$ and leave $\cN$  invariant, they determine bounded linear operators on $\cK$, which we also denote by $V(p)$ and $ \pi(a)$. We now verify that $(\pi, V)$ has the desired properties.

\noindent\textbf{Claim 1}. $V:P\to\bh{K}$ is an isometric representation. 

We have proved each $V(p)$ is an isometry. Let $p,q\in P$; then
\begin{align*}
V(p)V(q)\delta_{r,a}\otimes h &= \delta_{pqr,\alpha_{p}(\alpha_q(a))}\otimes h \\
&= \delta_{pqr,\alpha_{pq}(a)}\otimes h \\
&=V(pq)\delta_{r,a}\otimes h.
\end{align*}
Therefore, $V(pq)=V(p)V(q)$. That $V(e)=I$ is obvious from the definition.

\noindent \textbf{Claim 2}. $\pi:\cA\to\bh{K}$ is a unital $\ast$-homomorphism. 

It is clear that $\pi(1)=I$.
To prove $\pi$ is a homomorphism, pick any $a,b\in\cA$. It suffices to show that $\pi(a)\pi(b) = \pi(ab)$ on a set that has dense linear span, so choose $(r,c) \in \cX$ and $h\in \cH$; then
\[\pi(a)\pi(b)\delta_{r,c}\otimes h=\delta_{r,abc}\otimes h=\pi(ab)\delta_{r,c} \otimes h.\]
 It remains to show that $\pi(a^*)=\pi(a)^*$, and for this it suffices to prove that for every pair of elementary functions $\delta_{p,b}\otimes h$ and $  \delta_{q,c}\otimes k$ in $\cK$, we have
\[\langle \pi(a^*)\delta_{p,b}\otimes h, \delta_{q,c}\otimes k\rangle = \langle \pi(a)^*\delta_{p,b}\otimes h, \delta_{q,c}\otimes k\rangle\]
To verify this we compute
\begin{align*}
\langle \pi(a^*)\delta_{p,b}\otimes h, \delta_{q,c}\otimes k\rangle &=
\langle \delta_{p,a^* b}\otimes h, \delta_{q,c}\otimes k\rangle \\
&= \langle K(q,c^* a^* b,p)h, k\rangle \\
&= \langle \delta_{p,b}\otimes h, \delta_{q,ac}\otimes k\rangle \\
&= \langle \delta_{p,b}\otimes h, \pi(a)\delta_{q,c}\otimes k\rangle.
\end{align*}

\noindent \textbf{Claim 3}.
$K(p,a,q)=P_\cH V(p)^* \pi(a) V(q) \bigg|_\cH$.

For any $h,k\in \cH$, which embed in $\cK$ as $\delta_{e,1}\otimes h, \delta_{e,1}\otimes k$,
\begin{align*}
\langle V(p)^* \pi(a) V(q) \delta_{e,1}\otimes h, \delta_{e,1}\otimes k\rangle &= \langle \pi(a) V(q) \delta_{e,1}\otimes h, V(p) \delta_{e,1}\otimes k\rangle  \\
&= \langle \delta_{q,a}\otimes h, \delta_{p, \alpha_p(1}\otimes k\rangle \\
&= \langle K(p, \alpha_p(1) a, q)h, k\rangle\\
&= \langle K(p, a, q)h, k\rangle
\end{align*}
Here, we use the fact that $a\in \cA_{p,q}=\alpha_p(1)\cA\alpha_q(1)$ and thus $\alpha_p(1) a=a.$

\noindent\textbf{Claim 4}.  
$V(p) \pi(a) = \pi(\alpha_p(a)) V(p)$.

Take any elementary function $\delta_{b,r}\otimes h\in \cK$. Then
\begin{align*}
V(p)\pi(a) \delta_{b,r}\otimes h &= \delta_{\alpha_p(ab), pr}\otimes h \\
&= \delta_{\alpha_p(a)\alpha_p(b), pr}\otimes h \\
&= \pi(\alpha_p(a)) V(p) \delta_{b,r}\otimes h. 
\end{align*}
Since $V(p)\pi(a)$ and $\pi(\alpha_p(a)) V(p)$ are bounded linear operators, the equality also holds for every vector in $\cK$. 

\noindent\textbf{Claim 5}.  $(\pi,V)$ is minimal.

Since $\pi(a)V(p)h=\delta_{a,p}\otimes h$ we have
\begin{multline*}\lspan\{\pi(a)V(p)h: p\in P, a\in \cA \alpha_p(1), h\in \cH\} = \\ \lspan\{\delta_{a,p}\otimes h: p\in P, a\in \cA \alpha_p(1), h\in \cH\},
\end{multline*}
which is clearly dense in $\cK$.

\noindent\textbf{Claim 6}.  $(\pi,V)$ is unique up to unitary equivalence.

Suppose $\widetilde{\pi}$ and $\widetilde{V}$ is another minimal dilation on a Hilbert space $\widetilde{\cK}$ and consider
\[x=\sum_{i\in F} \widetilde{\pi}(a_i) \widetilde{V}(p_i) h_i, \qquad  y=\sum_{i\in F} \widetilde{\pi}(a_i) \widetilde{V}(p_i) k_i.\]
Their $\widetilde{\cK}$ inner product is given by
\[
\langle x,y\rangle = \sum_{i,j\in F} \langle \widetilde{V}(p_i)^* \widetilde{\pi}(a_i^* a_j) \widetilde{V}(p_j) h_j, k_i\rangle.\]
Since $K(p,a,q)=P_\cH V(p)^* \pi(a) V(q)\bigg|_\cH$ for all $(p,a,q)\in \Lambda_{(\cA,P,\alpha)}$, 
\[\langle \widetilde{V}(p_i)^* \widetilde{\pi}(a_i^* a_j) \widetilde{V}(p_j) h_j, k_i\rangle = \langle K(p_i, a_i^* a_j, p_j) h_j, k_i\rangle.\]
Therefore, 
\[\langle x,y\rangle = \sum_{i,j\in F}  \langle K(p_i, a_i^* a_j, p_j) h_j, k_i\rangle.\]
A similar argument can be applied to the two vectors 
\[Ux=\sum_{i\in F} \pi(a_i) V(p_i) h_i, \qquad Uy=\sum_{i\in F} \pi(a_i) V(p_i) k_i,\]
proving that $\langle Ux, Uy\rangle = \langle x,y\rangle$ for all $x,y\in \lspan\{\widetilde{\pi}(a)\widetilde{V}(p)h: p\in P, a\in \cA \alpha_p(1),h\in\cH\}$. Therefore, $U$ can be extended to a unitary on its closure $\widetilde{\cK}$. 
It is routine to check that $\widetilde{\pi}(a)=U^* \pi(a) U$ and $\widetilde{V}(p)=U^* V(p) U$ for all $a\in \cA$ and $p\in P$. This establishes that the minimal dilation $(\pi, V)$ is unique up to unitary equivalence and completes the proof.
\end{proof} 

Notice that Theorem \ref{thm.kernel.dilation}  only involves the weaker covariance condition $V(p)\pi(a)=\pi(\alpha_p(a))V(p)$, 
and that, in principle, the minimal dilation $(\pi,V)$  may not satisfy the stronger covariance condition $V(p)\pi(a)V(p)^*=\pi(\alpha_p(a))$. A major difficulty is that the construction is not explicit about the vectors $V(p)^*\delta_{q,b}\otimes h$. However, in the particular case when
$V(p)V(p)^*=\pi(\alpha_p(1))$ holds for all $p\in P$, then we can recover the covariance condition, as the following proposition shows. 
\begin{proposition}\label{prop.cov.equiv} Suppose $\pi:\cA\to\bh{K}$ is a unital $\ast$-homomorphism and $V:P\to\bh{K}$ is an isometric representation. Then the following are equivalent:
\begin{enumerate}
\item For every $p\in P$ and $a\in \cA$, \[V(p)\pi(a)V(p)^*=\pi(\alpha_p(a)).\]
\item For every $p\in P$ and $a\in \cA$, \[V(p)\pi(a)=\pi(\alpha_p(a))V(p).\]
and $V(p)V(p)^*=\pi(\alpha_p(1))$. 
\end{enumerate}
\end{proposition}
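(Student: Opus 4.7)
The plan is to prove the equivalence by chasing definitions directly, exploiting the fact that $V(p)$ is an isometry (so $V(p)^*V(p)=I$) and that $\pi$ is a unital $*$-homomorphism.

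For the direction $(1)\Rightarrow(2)$, I would first recover the range condition by substituting $a=1$ into the stronger covariance relation: since $\pi$ is unital,
\[
V(p)V(p)^* = V(p)\pi(1)V(p)^* = \pi(\alpha_p(1)).
\]
Then, to obtain the weaker covariance, I would multiply both sides of $V(p)\pi(a)V(p)^* = \pi(\alpha_p(a))$ on the right by $V(p)$ and use $V(p)^*V(p)=I$, giving $V(p)\pi(a) = \pi(\alpha_p(a))V(p)$.

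For the converse $(2)\Rightarrow(1)$, I would multiply the weaker covariance identity $V(p)\pi(a) = \pi(\alpha_p(a))V(p)$ on the right by $V(p)^*$ to get
\[
V(p)\pi(a)V(p)^* = \pi(\alpha_p(a))V(p)V(p)^* = \pi(\alpha_p(a))\pi(\alpha_p(1)) = \pi(\alpha_p(a)\alpha_p(1)) = \pi(\alpha_p(a)),
\]
where the second equality uses the hypothesis $V(p)V(p)^* = \pi(\alpha_p(1))$, the third uses multiplicativity of $\pi$, and the last uses that $\alpha_p$ is a $*$-homomorphism so $\alpha_p(a)\alpha_p(1) = \alpha_p(a\cdot 1) = \alpha_p(a)$.

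There is essentially no obstacle here; the content is purely algebraic manipulation. The only subtle point worth emphasizing is why the extra range condition $V(p)V(p)^*=\pi(\alpha_p(1))$ in (2) is genuinely needed: without it, the weak covariance alone only yields $V(p)\pi(a)V(p)^* = \pi(\alpha_p(a))V(p)V(p)^*$, and the right-hand side compresses $\pi(\alpha_p(a))$ by the range projection of $V(p)$ rather than by $\pi(\alpha_p(1))$. This is precisely the gap that the range condition closes, and it is also what justifies the remark following Theorem~\ref{thm.kernel.dilation} about the minimal Naimark dilation only providing the weaker covariance in general.
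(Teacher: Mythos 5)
Your proof is correct and follows essentially the same route as the paper: setting $a=1$ and right-multiplying by $V(p)$ for $(1)\Rightarrow(2)$, then right-multiplying by $V(p)^*$ and using $V(p)V(p)^*=\pi(\alpha_p(1))$ together with multiplicativity of $\pi$ for the converse. The concluding remark about why the range condition is indispensable is a nice addition but not a deviation.
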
 

\begin{proof} The conditions in (2) follow from (1) by multiplying $V(p)$ on the right hand side and setting $a=1$. To see the converse, assume (2) holds and multiply $V(p)^*$ on the right on both sides,
\begin{align*}
V(p)\pi(a)V(p)^* &= \pi(\alpha_p(a))V(p)V(p)^* \\
&= \pi(\alpha_p(a)) \pi(\alpha_p(1)) \\
&= \pi(\alpha_p(a)). \qedhere
\end{align*}

\end{proof} 

\begin{remark} The connection between the covariance relations $V(p)\pi(a)=\pi(\alpha_p(a))V(p)$ and $V(p)\pi(a)V(p)^*=\pi(\alpha_p(a))$ is further explored in \cite{Li2020env}, where it is shown that when the semigroup is abelian, the former relation can be dilated to the latter one. 
\end{remark} 

\begin{remark} Even though our main focus is on right LCM dynamical systems, our version of generalized Naimark dilation can be applied to a much wider range of semigroup dynamical systems. 

Recall the semigroup dynamical system arising from the Cuntz algebra $\cO_k$. Let $S_1, \cdots, S_k$ be isometries that generate $\cO_k$, and for any word $w=w_1w_2\cdots w_n\in\mathbb{F}_k^+$, let $|w|=n$ be the length of the word $w$ and $S_w=S_{w_1}S_{w_2}\cdots S_{w_n}$. Recall that the core $\cA=\overline{\lspan}\{S_\mu S_\nu^*: |\mu|=|\nu|\}$ of $\cO_k$  is isomorphic to the UHF algebra of type $k^\infty$. One can define the action $\alpha_1(x)=e_{11}\otimes x$ on the UHF core, which is the same as $\alpha_1(x)=S_1 x S_1^*$. We have a semigroup dynamical system $(\cA,\mathbb{N},\alpha)$, where the $\mathbb{N}$-action $\alpha$ is induced by the single $\ast$-endomorphism $\alpha_1$ via $\alpha_n:=\alpha_1^n$. For every $m,n \in \mathbb{N}$, one can explicitly compute the corner $\cA_{n,m}=\alpha_n(1)\cA \alpha_m(1)$ as 
\begin{align*}
\cA_{n,m}&=\overline{\lspan}\{S_\mu S_\nu^*: |\mu|=|\nu|, \mu=\underbrace{1\cdots 1}_{n}\mu', \nu=\underbrace{1\cdots 1}_{m}\nu'\} \\
&=\overline{\lspan}\{S_1^n S_{\mu'} S_{\nu'}^* S_1^{*m}: |\mu'|+n=|\nu'|+m\} \\
\end{align*}

Let $T_1,\cdots,T_k$ be contractions in $\bh{H}$ such that $\sum_{i=1}^k T_i T_i^*=I$. For any $w\in\mathbb{F}_k^+$, we denote $T_w$ in a similar way as $S_w$. One can define a kernel system $K$ by 
\[K(n, S_1^n S_{\mu'} S_{\nu'}^* S_1^{*m}, m)=T_{\mu'} T_{\nu'}^*.\]
This kernel is from the compression of the identity map of $\cA$ and the isometric representation $V(n)=S_1^n$. Therefore, it does satisfy all the conditions in  Definition \ref{df.kernel}. This is not surprising due to the well-known  dilation theorem of Popescu, \cite[Theorem~3.2]{Popescu1999b}, whereby the $T_i$ can be dilated to Cuntz isometries.  
\end{remark} 


\section{Covariant Pairs and Dilation}\label{sect.dilation}

We turn our attention back to right LCM dynamical systems. The goal of this section is to prove the following result, which can be seen as an equivariant version of Stinespring's theorem for right LCM dynamical systems. 

\begin{theorem}\label{thm.dym.dilation} Let $(\phi,T)$ be a contractive covariant representation of a right LCM dynamical system $(\cA,P,\alpha)$. Then there exists an isometric covariant dilation $(\pi,V)$ of $(\phi,T)$ if and only if $\phi$ is a unital completely positive map. Moreover, the dilation $(\pi,V)$ can be taken to be minimal and the minimal dilation is unique up to unitary equivalence.
\end{theorem}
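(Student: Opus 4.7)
\emph{Necessity} is immediate: if $(\pi,V)$ is an isometric covariant dilation then $\phi=P_\cH\pi(\cdot)|_\cH$ is the compression of a unital $\ast$-homomorphism, hence unital completely positive.

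For \emph{sufficiency}, assume $\phi$ is u.c.p.\ and aim to apply the generalized Naimark dilation of Theorem~\ref{thm.kernel.dilation}. First I would define a kernel system $K$ on $\Lambda_{(\cA,P,\alpha)}$ modelled on what the compression would be in any isometric covariant dilation: using $\pi(\alpha_r(c))=V(r)\pi(c)V(r)^*$ together with $V(p)^*V(p)=I$ one sees that $V(p)^*\pi(\alpha_r(c))V(q)=V(p_1)\pi(c)V(q_1)^*$ whenever $rP=pP\cap qP$ and $r=pp_1=qq_1$. Accordingly, set
\[
K(p,a,q):=\begin{cases} T(p_1)\phi(c)T(q_1)^*, & \text{if } pP\cap qP=rP,\ r=pp_1=qq_1,\ a=\alpha_r(c),\\ 0, & \text{if } pP\cap qP=\emptyset. \end{cases}
\]
Well-definedness (independence of the representative $r$ up to $P^*$) uses that $T|_{P^*}$ is unitary together with the covariance relation; the unital, Hermitian, and linear properties are immediate; and the Toeplitz property follows from the identity $(sp)P\cap(sq)P=s(pP\cap qP)$, valid in every left-cancellative semigroup, which preserves the factorisation $p_1,q_1$.

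The heart of the argument is Proposition~\ref{prop.positive.ucp}: $K$ is positive if and only if $\phi$ is u.c.p. Taking all $p_i=e$ collapses the matrix to $[\phi(a_i^*a_j)]$, giving the forward direction. For the converse, given $a_i\in\cA\alpha_{p_i}(1)=\alpha_{p_i}(\cA)$, write $a_i=\alpha_{p_i}(b_i)$ and expand $a_i^*a_j=\alpha_{r_{ij}}(c_{ij})$, where $r_{ij}=p_ip_{ij}=p_jp_{ji}$ and $c_{ij}=\alpha_{p_{ij}^{-1}}(b_i^*)\alpha_{p_{ji}^{-1}}(b_j)$, yielding
\[
K(p_i,a_i^*a_j,p_j)=T(p_{ij})\,\phi\!\bigl(\alpha_{p_{ij}^{-1}}(b_i^*)\alpha_{p_{ji}^{-1}}(b_j)\bigr)\,T(p_{ji})^*.
\]
Passing to a Stinespring representation $\phi=W^*\sigma(\cdot)W$ and using the automatic identity $T(p)T(p)^*=\phi(\alpha_p(1))$ (covariance at $a=1$), I would rearrange $\sum_{i,j}\langle K(p_i,a_i^*a_j,p_j)h_j,h_i\rangle$ into a manifestly non-negative expression. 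This rearrangement, which depends critically on how the right LCM structure aligns the index-dependent operators $T(p_{ij}),T(p_{ji})^*$, is the main obstacle of the proof.

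Once $K$ is positive, Theorem~\ref{thm.kernel.dilation} produces a minimal $(\pi,V)$ on $\cK\supseteq\cH$ satisfying the weak covariance $V(p)\pi(a)=\pi(\alpha_p(a))V(p)$ together with $K(p,a,q)=P_\cH V(p)^*\pi(a)V(q)|_\cH$. By Proposition~\ref{prop.cov.equiv}, to upgrade to the strong covariance $V(p)\pi(a)V(p)^*=\pi(\alpha_p(a))$ it suffices to show $V(p)V(p)^*=\pi(\alpha_p(1))$. One inclusion is immediate from weak covariance at $a=1$: $V(p)=\pi(\alpha_p(1))V(p)$, hence $V(p)V(p)^*\leq\pi(\alpha_p(1))$. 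The reverse inclusion is the second technical point: on the dense spanning set $\{\pi(a)V(q)h\}$ one must show $\pi(\alpha_p(1)a)V(q)h\in V(p)\cK$. When $pP\cap qP=\emptyset$ the vector is zero; otherwise write $\alpha_p(1)a=\alpha_r(c)$ with $rP=pP\cap qP$ and $r=pp_1=qq_1$, and verify, using the explicit form of $K$ and the covariance identity $\phi(\alpha_{q_1}(x))=T(q_1)\phi(x)T(q_1)^*$, that $\delta_{q,\alpha_r(c)}\otimes h=\delta_{r,\alpha_r(c)}\otimes T(q_1)^*h$ in $\cK$, which exhibits $\pi(\alpha_r(c))V(q)h$ as $V(p)V(p_1)\pi(c)T(q_1)^*h\in V(p)\cK$. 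Finally, the identities $P_\cH\pi(a)|_\cH=K(e,a,e)=\phi(a)$ and $P_\cH V(p)|_\cH=K(e,\alpha_p(1),p)=T(p)$ confirm the dilation property, and uniqueness up to unitary equivalence is inherited from Theorem~\ref{thm.kernel.dilation}.
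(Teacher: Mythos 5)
Your overall architecture matches the paper's: build the kernel system $K(p,a,q)=T(p^{-1}r)\phi(\alpha_r^{-1}(a))T(q^{-1}r)^*$, verify it is unital, Hermitian, Toeplitz and linear, invoke Theorem~\ref{thm.kernel.dilation}, and then upgrade the weak covariance to $V(p)\pi(a)V(p)^*=\pi(\alpha_p(a))$ by computing $V(p)^*$ on the spanning vectors $\delta_{q,b}\otimes h$; the necessity direction and the final covariance computation are essentially what the paper does. The problem is the positivity of $K$, which you correctly identify as the heart of the matter but do not actually prove. Your plan is to expand each entry pairwise, $K(p_i,a_i^*a_j,p_j)=T(p_{ij})\phi(c_{ij})T(p_{ji})^*$ with $r_{ij}P=p_iP\cap p_jP$ and $r_{ij}=p_ip_{ij}=p_jp_{ji}$, and then ``rearrange'' the quadratic form into something non-negative via a Stinespring representation of $\phi$. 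This does not work as stated: the conjugating operators $T(p_{ij})$ depend on \emph{both} indices, the elements $r_{ij}$, $r_{jk}$, $r_{ik}$ are in general unrelated, and there is no congruence $D[\,\cdot\,]D^*$ hiding in that expression. Declaring the rearrangement to be ``the main obstacle'' and leaving it unresolved is a genuine gap, not a routine verification.

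The paper closes this gap with a different decomposition. Lemma~\ref{lm.Kpos1} handles only the special case where \emph{all} the $p_i$ admit a single common upper bound $r$ and all $a_i\in\cA_r$; then $K(p_i,a_i^*a_j,p_j)=T(p_i^{-1}r)\phi(b_i^*b_j)T(p_j^{-1}r)^*$ with $b_i=\alpha_r^{-1}(a_i)$, so the conjugating operator depends on $i$ alone and the matrix is $D\,\phi^{(n)}([b_i^*b_j])\,D^*\geq 0$ by complete positivity. The general case of Proposition~\ref{prop.positive.ucp} is then reduced to this one by inserting the partition of unity $\sum_{W\subseteq F}E_{W,F}=I$ from Lemma~\ref{lm.EWF}: each summand $[K(p_i,a_i^*E_{W,F}a_j,p_j)]$ is supported on the indices in $W$, and for those indices the elements $a_iE_{W,F}$ all lie in the single corner $\cA_r$ with $rP=\bigcap_{p_i\in W}p_iP$. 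This is exactly the device your pairwise expansion is missing, and it is where the right LCM hypothesis does its real work. You would need to reproduce this $E_{W,F}$ argument, or supply an honest substitute, before the appeal to Theorem~\ref{thm.kernel.dilation} is justified.
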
 

\begin{remark} Consider the trivial case where $P=\{e\}$. Then Theorem \ref{thm.dym.dilation} is simply  Stinespring's dilation theorem which states that every unital completely positive map can be dilated to a $\ast$-homomorphism. 

For nontrivial $P$, Stinespring's theorem only gives a $\ast$-homomorphic dilation $\pi$ of the unital completely positive map $\phi$. Surprisingly, it also allows us to construct an isometric dilation $V$ alongside, such that $(\pi, V)$ satisfies the covariance condition. 
\end{remark} 

The proof is divided into two parts. First  we construct a kernel system $K$ from a contractive covariant representation $(\phi,T)$
and show, in a series of lemmas and propositions, that the kernel system $K$ enjoys all the nice properties from Definition \ref{df.kernel} whenever $\phi$ is unital completely positive. This allows us to invoke Theorem \ref{thm.kernel.dilation} to obtain a $\ast$-homomorphism $\pi$ of the $\mathrm{C}^*$-algebra and an isometric representation $V$ of the semigroup. Then the proof is concluded by verifying that $(\pi,V)$ satisfies the covariance condition from Definition \ref{df.CovariantPair}.  


We begin by constructing  kernel systems from  contractive covariant representations.
 \begin{lemma}\label{lem:welldefined} Let $(\phi,T)$ be a contractive covariant representation of the injective right LCM dynamical system $(\cA, P, \alpha)$. Suppose that $rP=sP=pP\cap qP$.
Then
\[T(p^{-1}r) \phi(\alpha_r^{-1}(a)) T(q^{-1}r)^* = T(p^{-1}s) \phi(\alpha_s^{-1}(a)) T(q^{-1}s)^* \]
\end{lemma}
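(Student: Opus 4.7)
The plan is to exploit the fact that when $rP=sP$ in a right LCM semigroup, the two elements differ by a unit. Specifically, I would begin by invoking the observation (made earlier in the paper) that $rP=sP$ forces the existence of a unit $u\in P^\ast$ with $s=ru$. Left-cancellation in $P$ then lets me carry this unit through the other factorizations: from $r=p(p^{-1}r)$ and $s=p(p^{-1}s)$, applying $s=ru$ gives $p^{-1}s=(p^{-1}r)u$, and similarly $q^{-1}s=(q^{-1}r)u$. Since $T$ is a semigroup homomorphism, this yields $T(p^{-1}s)=T(p^{-1}r)T(u)$ and $T(q^{-1}s)^\ast=T(u)^\ast T(q^{-1}r)^\ast$.

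Next I would use the fact noted in the preliminaries that a contractive representation sends units to unitaries, so $T(u)$ is unitary. On the $\mathrm{C}^\ast$-algebra side, $\alpha_s=\alpha_r\circ\alpha_u$ with $\alpha_u$ a $\ast$-automorphism, so $\alpha_r$ and $\alpha_s$ have the same image $\cA_r=\cA_s$, and $\alpha_s^{-1}=\alpha_u^{-1}\circ\alpha_r^{-1}$ on that image. Applying the covariance relation $T(u)\phi(b)T(u)^\ast=\phi(\alpha_u(b))$ with $b=\alpha_u^{-1}(\alpha_r^{-1}(a))$ yields
\[
T(u)\,\phi(\alpha_s^{-1}(a))\,T(u)^\ast \;=\; \phi(\alpha_r^{-1}(a)).
\]

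Putting these pieces together gives
\[
T(p^{-1}r)\,\phi(\alpha_r^{-1}(a))\,T(q^{-1}r)^\ast \;=\; T(p^{-1}r)T(u)\,\phi(\alpha_s^{-1}(a))\,T(u)^\ast T(q^{-1}r)^\ast,
\]
and the right-hand side collapses to $T(p^{-1}s)\,\phi(\alpha_s^{-1}(a))\,T(q^{-1}s)^\ast$ using the two factorizations from the first paragraph. There is no real obstacle here; the only point that merits care is the implicit claim that $p^{-1}r$, $p^{-1}s$, $q^{-1}r$, $q^{-1}s$ are all well-defined elements of $P$ (which follows from $r,s\in pP\cap qP$ together with left-cancellation), and that $a$ lies in $\cA_r=\cA_s$ so that $\alpha_r^{-1}(a)$ and $\alpha_s^{-1}(a)$ make sense, as the statement tacitly assumes.
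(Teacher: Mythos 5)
Your argument is correct and is essentially the paper's proof: both write one of $r,s$ as the other times a unit $u$, pull $T(u)$ out of $T(p^{-1}\cdot)$ and $T(q^{-1}\cdot)$ via left-cancellation, and absorb it using the covariance relation $T(u)\phi(b)T(u)^*=\phi(\alpha_u(b))$. The only cosmetic difference is that the paper writes $r=su$ where you write $s=ru$, and your remark on $T(u)$ being unitary is not actually needed, since only the covariance identity for the unit $u$ is used.
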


\begin{proof} If $rP=sP=pP\cap qP$, then $r=su$ for some invertible element $u\in P$, so we have \begin{align*}
T(p^{-1}r) \phi(\alpha_r^{-1}(a)) T(q^{-1}r)^* &= T(p^{-1} s) T(u) \phi(\alpha_u^{-1}(\alpha_s^{-1}(a))) T(u)^* T(q^{-1} s)^* \\
&= T(p^{-1}s) \phi(\alpha_s^{-1}(a)) T(q^{-1}s)^*. \qedhere
\end{align*}

\end{proof}

\begin{definition}\label{df.K} The \emph{kernel system $K$ associated with a contractive covariant representation $(\phi,T)$} is the function on 
$\Lambda_{(\cA,P,\alpha)}$ defined by  
\[
K(p,a,q):=\begin{cases}T(p^{-1}r) \phi(\alpha_r^{-1}(a)) T(q^{-1}r)^* & \text{ if } rP=pP\cap qP \neq \emptyset;\\
0& \text{ if } pP\cap qP = \emptyset.
\end{cases}  
\]
 Notice  $K$ is well-defined because of Lemma~\ref{lem:welldefined}.
\end{definition}

\begin{lemma}\label{lm.Kdef2} Suppose that $a\in \cA_s$ for some $s\in pP\cap qP$. Then 
\[K(p,a,q)=T(p^{-1}s) \phi(\alpha_s^{-1}(a)) T(q^{-1}s)^*.\]
\end{lemma}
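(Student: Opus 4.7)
The plan is to reduce the statement to the defining formula for $K$ by writing $s$ explicitly as a multiple of the LCM element $r$ and then using the covariance relation of $(\phi,T)$ to absorb the extra factors.

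First I would unpack the hypothesis. Since $pP \cap qP \ne \emptyset$, the right LCM property gives $pP \cap qP = rP$ for some $r \in P$, and by definition of $K$ we have $K(p,a,q) = T(p^{-1}r)\phi(\alpha_r^{-1}(a))T(q^{-1}r)^*$. Because $s \in pP \cap qP = rP$, there exists $t \in P$ with $s = rt$. Then $p^{-1}s = (p^{-1}r)t$ and $q^{-1}s = (q^{-1}r)t$, and since $T$ is a semigroup homomorphism we get the factorizations $T(p^{-1}s) = T(p^{-1}r)T(t)$ and $T(q^{-1}s)^* = T(t)^* T(q^{-1}r)^*$.

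Next I would reconcile the two preimages. Setting $b = \alpha_s^{-1}(a)$ (which is well defined because $a \in \cA_s$), we have $a = \alpha_s(b) = \alpha_{rt}(b) = \alpha_r(\alpha_t(b))$, and since $\alpha_r$ is injective this gives $\alpha_r^{-1}(a) = \alpha_t(b) = \alpha_t(\alpha_s^{-1}(a))$. Plugging everything into the right hand side of the claimed identity,
\begin{align*}
T(p^{-1}s)\phi(\alpha_s^{-1}(a))T(q^{-1}s)^* &= T(p^{-1}r)\, T(t)\phi(b)T(t)^*\, T(q^{-1}r)^*,
\end{align*}
and now the covariance relation $T(t)\phi(b)T(t)^* = \phi(\alpha_t(b))$ of $(\phi,T)$ collapses the middle factor to $\phi(\alpha_t(b)) = \phi(\alpha_r^{-1}(a))$. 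This yields $T(p^{-1}r)\phi(\alpha_r^{-1}(a))T(q^{-1}r)^* = K(p,a,q)$ as required.

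There is essentially no obstacle here; the proof is a bookkeeping exercise. The only subtlety worth flagging is that $\phi$ is not assumed multiplicative, so the covariance relation must be applied exactly once in the form $T(t)\phi(b)T(t)^* = \phi(\alpha_t(b))$ rather than being iterated through intermediate products. One should also note that the result is independent of the choice of $t$ with $s = rt$ (any two such $t$ differ by a unit, and the argument above is unchanged after a trivial adjustment using the unitarity of $T$ on units), which confirms that the definition of $K$ in Definition~\ref{df.K} extends consistently to arbitrary common multiples of $p$ and $q$.
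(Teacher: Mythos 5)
Your proof is correct and follows essentially the same route as the paper: write $s=rt$ with $rP=pP\cap qP$, factor $T(p^{-1}s)=T(p^{-1}r)T(t)$, and apply the covariance relation once to convert $T(t)\phi(\alpha_s^{-1}(a))T(t)^*$ into $\phi(\alpha_r^{-1}(a))$. The only trivial quibble is your closing aside: since $P$ is left cancellative, the $t$ with $s=rt$ is actually unique, so no well-definedness check is needed there.
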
 

\begin{proof} Let $pP\cap qP=rP$;  since $s\in rP$ we have $s=rt$ for some $t\in P$. We have,

\begin{align*}
 \tcb{T(p^{-1}s) \phi(\alpha_s^{-1}(a)) T(q^{-1}s)^* }
&= T(p^{-1}r) T(t) \phi(\alpha_s^{-1}(a)) T(t)^* T(q^{-1}r)^* \\
&= T(p^{-1}r) \phi(\alpha_t(\alpha_s^{-1}(a)))  T(q^{-1}r)^* \\
&= T(p^{-1}r) \phi(\alpha_r^{-1}(a))  T(q^{-1}r)^* \\
&= K(p,a,q). \qedhere
\end{align*}
\end{proof} 
\begin{lemma}\label{lm.Kpullout} Let $p,q\in P$ and $r\in qP$. Let $a\in \cA_p\cA_r\subseteq \cA_p\cA_q$. Then 
\[K(p,a,q)=K(p,a,r)T(q^{-1}r)^*.\]
\end{lemma}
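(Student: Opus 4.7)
My plan is to apply Lemma \ref{lm.Kdef2} with a cleverly chosen element lying in both $pP\cap qP$ and $pP\cap rP$, and then exploit the semigroup factorization $q^{-1}s = (q^{-1}r)(r^{-1}s)$ to split the isometry $T(q^{-1}s)^*$.

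First I would reduce to the interesting case. If $pP\cap rP = \emptyset$, then by Proposition \ref{prop.equivLCM} the set $\cA_p\cA_r$ is $\{0\}$, so $a=0$ and both sides of the claimed identity vanish. So assume $pP\cap rP = sP$ for some $s\in P$. By Proposition \ref{prop.equivLCM} again, $\cA_p\cA_r = \cA_s$, so we may write $a = \alpha_s(a_0)$ for some $a_0\in\cA$; in particular $a\in\cA_s$.

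Next, since $r\in qP$, we have $rP\subseteq qP$, and therefore $s\in pP\cap rP \subseteq pP\cap qP$. In particular, $pP\cap qP$ is nonempty, and $s$ lies in it; write $pP\cap qP = tP$ (so $s\in tP$). By Lemma \ref{lm.Kdef2}, the element $s$ may be used in place of $t$ when computing $K(p,a,q)$, and it may similarly be used when computing $K(p,a,r)$. This gives
\begin{align*}
K(p,a,q) &= T(p^{-1}s)\,\phi(\alpha_s^{-1}(a))\,T(q^{-1}s)^*,\\
K(p,a,r) &= T(p^{-1}s)\,\phi(\alpha_s^{-1}(a))\,T(r^{-1}s)^*.
\end{align*}

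Finally I would factor $T(q^{-1}s)^*$. Since $s\in rP\subseteq qP$ and $r\in qP$, the elements $q^{-1}r, r^{-1}s, q^{-1}s$ all lie in $P$ and satisfy $q^{-1}s = (q^{-1}r)(r^{-1}s)$. Because $T$ is a representation of $P$, we obtain
\[ T(q^{-1}s)^* = \bigl(T(q^{-1}r)T(r^{-1}s)\bigr)^* = T(r^{-1}s)^*\,T(q^{-1}r)^*. \]
Substituting this into the formula for $K(p,a,q)$ yields
\[ K(p,a,q) = T(p^{-1}s)\,\phi(\alpha_s^{-1}(a))\,T(r^{-1}s)^*\,T(q^{-1}r)^* = K(p,a,r)\,T(q^{-1}r)^*, \]
as required.

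The only mild subtlety is checking that the symbols $q^{-1}r$, $r^{-1}s$, $q^{-1}s$ denote genuine semigroup elements (not merely formal expressions in an enveloping group), which is immediate once one picks, say, $q_0$ with $r = qq_0$ and $s_0$ with $s = rs_0$; then $q^{-1}r = q_0$, $r^{-1}s = s_0$, and $q^{-1}s = q_0 s_0$. Well-definedness of $K$ via Lemma \ref{lem:welldefined} ensures that the two different choices of least upper bound (namely $s$ versus the originally chosen $t$ for $pP\cap qP$) produce the same value.
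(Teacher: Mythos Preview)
Your proof is correct and follows essentially the same route as the paper's own argument: split into the cases $pP\cap rP=\emptyset$ and $pP\cap rP=sP$, in the latter case observe $s\in pP\cap qP$, invoke Lemma~\ref{lm.Kdef2} with $s$ for both $K(p,a,q)$ and $K(p,a,r)$, and then factor $T(q^{-1}s)^*=T(r^{-1}s)^*T(q^{-1}r)^*$. You have simply added more explicit justification (the use of Proposition~\ref{prop.equivLCM} and the check that $q^{-1}r,\ r^{-1}s$ are genuine semigroup elements), which is fine.
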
 

\begin{proof} In the case when $pP\cap rP=\emptyset$, $a\in \cA_p\cA_r=\{0\}$ and thus $a=0$, and either side becomes $0$. 

Assume otherwise that $pP\cap rP=sP\subset pP\cap qP$. Then $s\in pP\cap qP$, and thus by Lemma \ref{lm.Kdef2},
\begin{align*}
K(p,a,q) &= T(p^{-1} s) \phi(\alpha_s^{-1}(a)) T(q^{-1}s)^* \\
&= T(p^{-1} s) \phi(\alpha_s^{-1}(a)) T(r^{-1}s)^* T(q^{-1} r)^* \\
&= K(p,a,r)T(q^{-1}r)^*. \qedhere
\end{align*}
\end{proof} 

\begin{proposition}\label{prop.Kbasic} Let $K$ be the kernel system associated with $(\phi,T)$. Then $K$ is unital, Hermitian, Toeplitz, and linear. 
\end{proposition}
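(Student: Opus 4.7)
The plan is to verify each of the four properties in turn, using the explicit formula of Definition \ref{df.K} together with Lemma \ref{lm.Kdef2}. All four verifications are essentially direct computations; the main subtlety is ensuring well-definedness for the Toeplitz property, which requires a compatibility statement between left multiplication by $s\in P$ and the right-LCM structure.

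\textbf{Unital.} Since $eP\cap eP=eP$, take $r=e$ in Definition \ref{df.K} to obtain $K(e,1,e)=T(e)\phi(\alpha_e^{-1}(1))T(e)^*=\phi(1)=I$, using that $T(e)=I$ and $\phi$ is unital.

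\textbf{Hermitian.} If $pP\cap qP=\emptyset$ both sides vanish. Otherwise write $pP\cap qP=rP$; taking the adjoint of the defining formula for $K(p,a,q)$ and using that $\phi$ is $\ast$-preserving and $\alpha_r^{-1}(a)^*=\alpha_r^{-1}(a^*)$ gives
\[K(p,a,q)^*=T(q^{-1}r)\phi(\alpha_r^{-1}(a^*))T(p^{-1}r)^*=K(q,a^*,p),\]
as required.

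\textbf{Toeplitz.} The first step is the key compatibility: for every $s,p,q\in P$, if $pP\cap qP=rP$ then $(sp)P\cap(sq)P=(sr)P$, and if $pP\cap qP=\emptyset$ then $(sp)P\cap(sq)P=\emptyset$. Both follow from left-cancellation in $P$: any element of $(sp)P\cap(sq)P$ is of the form $spx=sqy$, hence $px=qy\in pP\cap qP$, and one then multiplies back by $s$. Given this, if $pP\cap qP=\emptyset$ then $K(sp,\alpha_s(a),sq)=0=K(p,a,q)$. Otherwise, for $a\in\cA_{p,q}\subseteq\cA_r$ we have $\alpha_s(a)\in\cA_{sr}$ and $\alpha_{sr}^{-1}(\alpha_s(a))=\alpha_r^{-1}(a)$ since $\alpha_{sr}=\alpha_s\circ\alpha_r$. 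Using $(sp)^{-1}(sr)=p^{-1}r$ and $(sq)^{-1}(sr)=q^{-1}r$ in the enveloping group, Definition \ref{df.K} yields
\[K(sp,\alpha_s(a),sq)=T(p^{-1}r)\phi(\alpha_r^{-1}(a))T(q^{-1}r)^*=K(p,a,q).\]

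\textbf{Linear.} Fix $p,q\in P$ and $a,b\in\cA_{p,q}$. If $pP\cap qP=\emptyset$ the statement is trivial. If $pP\cap qP=rP$, linearity follows at once from the linearity of $\alpha_r^{-1}$ on $\cA_r$ (its domain contains $a,b$, and hence $a+\lambda b$), the linearity of $\phi$, and the fact that conjugation by $T(p^{-1}r)$ and $T(q^{-1}r)^*$ is a bilinear operation on operators.

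The only step requiring any real care is the Toeplitz property, and there the only genuine issue is the identity $(sp)P\cap(sq)P=s(pP\cap qP)$, which rests on left-cancellation in $P$; the remaining manipulations are bookkeeping in the ambient group and standard use of the identity $\alpha_{sr}=\alpha_s\circ\alpha_r$. None of these arguments require $\phi$ to be completely positive, which is consistent with the fact that positivity (Proposition \ref{prop.positive.ucp}) will be the content of a later, separate argument.
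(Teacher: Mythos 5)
Your proof is correct and follows essentially the same route as the paper's: case analysis on $pP\cap qP$ and direct computation from Definition \ref{df.K}. The only difference is that you explicitly justify the identity $(sp)P\cap(sq)P=s\cdot(pP\cap qP)$ via left-cancellation, a step the paper's proof treats as clear; this is a welcome detail but not a different argument.
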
 

\begin{proof} The kernel $K$ is unital because $K(e,1,e)=T(e) \phi(1) T(e)^*=I$. To see $K$ is Hermitian, consider first the case when $pP\cap qP=\emptyset$; then $K(p,a,q)^*=0=K(q,a^*,p)$. Next consider the case $pP\cap qP=rP$ and use the fact that $\phi$ and $\alpha_r$ are $\ast$-maps:
\begin{align*}
K(p,a,q)^* &= \left( T(p^{-1}r) \phi(\alpha_r^{-1}(a)) T(q^{-1}r)^*\right)^* \\
&= T(q^{-1}r) \phi(\alpha_r^{-1}(a))^* T(p^{-1}r)^* \\
&= T(q^{-1}r) \phi(\alpha_r^{-1}(a^*)) T(p^{-1}r)^* \\
&= K(q,a^*,p).
\end{align*}

To see that $K$ is Toeplitz, assume first $pP\cap qP=\emptyset$; then it is clear that $rpP\cap rqP=r\cdot (pP\cap qP)=\emptyset$, and $K(p,a,q)=0=K(rp,\alpha_r(a),rq)$. Assume next $pP\cap qP=sP$; then $rpP\cap rqP=rsP$, and 
\begin{align*}
K(rp, \alpha_r(a), rq) &= T((rp)^{-1} rs) \phi(\alpha_{rs}^{-1}(\alpha_r(a))) T((rq)^{-1} rs)^* \\
&= T(p^{-1}s) \phi(\alpha_s^{-1}(a)) T(q^{-1}s)^* \\
&= K(p,a,q).
\end{align*}
Finally, the linearity of $K$ follows from that of $\phi$ and $\alpha_r$. 
\end{proof}

\begin{lemma}\label{lm.Kpos1} Suppose $p_1,\cdots, p_n\in P$ have a least common multiple $r\in \bigcap_{i=1}^n p_i P$, and let $a_1,\cdots,a_n\in \cA_r$. If $\phi$ is completely positive, then the operator matrix $[K(p_i, a_i^* a_j, p_j)]$ is positive definite.
\end{lemma}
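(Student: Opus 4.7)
The plan is to reduce the matrix $[K(p_i, a_i^* a_j, p_j)]$ to the manifestly positive form $D\, [\phi(b_i^* b_j)]\, D^*$ for a suitable block-diagonal operator $D$, at which point complete positivity of $\phi$ finishes the argument.

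First I would record the data: since $r \in \bigcap_{i=1}^n p_i P$, write $r = p_i q_i$ for some $q_i \in P$, so $q_i = p_i^{-1} r$ in the notation used in the definition of $K$. Because $\cA_r = \alpha_r(\cA)$ is a $\mathrm{C}^*$-subalgebra of $\cA$, the products $a_i^* a_j$ all lie in $\cA_r$. Since $r \in p_i P \cap p_j P$, Lemma \ref{lm.Kdef2} applies to each entry and gives
\[
K(p_i, a_i^* a_j, p_j) = T(q_i)\, \phi\bigl(\alpha_r^{-1}(a_i^* a_j)\bigr)\, T(q_j)^*.
\]
Letting $b_i := \alpha_r^{-1}(a_i) \in \cA$, and using that $\alpha_r^{-1}\colon \cA_r \to \cA$ is a $\ast$-homomorphism (being the inverse of the injective $\ast$-homomorphism $\alpha_r$ restricted to its image), we obtain $\alpha_r^{-1}(a_i^* a_j) = b_i^* b_j$, so
\[
K(p_i, a_i^* a_j, p_j) = T(q_i)\, \phi(b_i^* b_j)\, T(q_j)^*.
\]

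Next I would factor the matrix. Let $D = \diag\bigl(T(q_1), \ldots, T(q_n)\bigr)$ acting on $\cH^n$. Then the $n\times n$ operator matrix can be written as
\[
[K(p_i, a_i^* a_j, p_j)] = D\, [\phi(b_i^* b_j)]\, D^*.
\]
Because $\phi$ is completely positive, the standard characterization of complete positivity (the Gram-matrix criterion) yields $[\phi(b_i^* b_j)] \geq 0$ in $M_n(\bh{H})$. Conjugation by $D$ preserves positivity, so $[K(p_i, a_i^* a_j, p_j)] \geq 0$, as required.

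I do not anticipate a real obstacle; the only points to check are that $\cA_r$ is closed under the operations used (which is clear as it is a $\mathrm{C}^*$-subalgebra) and that Lemma \ref{lm.Kdef2} is applicable uniformly for every pair $(i,j)$, which is exactly what the hypothesis $r \in \bigcap p_i P$ guarantees. The role of this lemma in the larger proof is precisely to reduce positivity questions about $K$ to the single-algebra statement of complete positivity for $\phi$; the presence of a common multiple $r$ is what makes this reduction possible, and subsequent results will presumably handle the general (non-common-multiple) case by partitioning indices according to which $p_i$'s admit a common multiple.
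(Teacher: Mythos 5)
Your proof is correct and follows essentially the same route as the paper: write $r=p_iq_i$, apply Lemma \ref{lm.Kdef2} to each entry, set $b_i=\alpha_r^{-1}(a_i)$, and factor the matrix as $D\,[\phi(b_i^*b_j)]\,D^*$ with $D=\diag(T(p_1^{-1}r),\ldots,T(p_n^{-1}r))$, invoking complete positivity of $\phi$. No issues to report.
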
 
\begin{proof} For each $i$, write $r=p_i q_i$ for some $q_i\in P$ and $a_i=\alpha_r(b_i)$ for $b_i\in \cA$. Let $b_i=\alpha_r^{-1}(a_i)$. By Lemma \ref{lm.Kdef2}, 
\begin{align*}
K(p_i, a_i^* a_j, p_j) &= T(p_i^{-1} r) \phi(\alpha_r^{-1}(a_i^* a_j)) T(p_j^{-1} r)^* \\
&= T(p_i^{-1} r) \phi(b_i^* b_j) T(p_j^{-1} r)^*.
\end{align*}

Let $A=[\phi(b_i^* b_j)]$, $B=[b_i^* b_j]$, and $D$ be the $n\times n$ diagonal matrix with diagonal entries $T(p_i^{-1}r)$. Then 
\begin{align*}
[K(p_i, a_i^* a_j, p_j)] &= D A D^* 
= D \phi^{(n)}(B) D^*.
\end{align*}
Since $B$ is obviously positive and $\phi$ is completely positive, the matrix $[K(p_i, a_i^* a_j, p_j)]$ is positive definite. \end{proof}

\begin{lemma}\label{lm.EWF} 
Let $F=\{p_1,\cdots,p_n\}\subset P$ and for each subset $ W\subseteq F$ define \[E_{W,F}= \Big(\prod_{p\in W} E_p \Big)  \Big( \prod_{p\in F\backslash W} (I-E_p)\Big).\]
Then
\begin{enumerate}

\smallskip\item $E_{W,F}$ is an orthogonal projection.
\smallskip\item For each $p\in F\backslash W$ and $a\in\cA_p$, $aE_{W,F}=E_{W,F}a=0$. 
\smallskip\item For any $W_1\neq  W_2\subseteq F$, $E_{W_1, F} E_{W_2, F}=0$. 
\smallskip\item   \ $\sum_{W\subseteq F} E_{W,F}=I$.
\end{enumerate}
\end{lemma}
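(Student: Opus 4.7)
The proof rests on a single key observation: the family $\{E_p\}_{p\in P}$ of projections is pairwise commuting. Indeed, by Definition~\ref{df.respectLCM} each product $E_p E_q$ equals either $E_r$ (when $pP\cap qP=rP$) or $0$; in either case the result is self-adjoint, so $E_p E_q = (E_p E_q)^* = E_q E_p$. Consequently the enlarged family $\{E_p, I-E_p : p\in P\}$ also consists of pairwise commuting projections, and I will reduce every part of the lemma to this commutativity.

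Assertion (1) is then standard: a finite product of commuting projections is a projection, since the factors can be permuted freely and each individual projection satisfies $P^2=P=P^*$. For (2), I use commutativity to factor $E_{W,F} = (I-E_p)X = X(I-E_p)$, where $X$ is the product of the remaining (commuting) factors. Since $E_p$ is the identity of the ideal $\cA_p$, every $a\in \cA_p$ satisfies $E_p a = a = a E_p$, so $(I-E_p)a = 0 = a(I-E_p)$, which immediately gives both $aE_{W,F}=0$ and $E_{W,F}a=0$.

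For (3), if $W_1\neq W_2$, pick $p$ in their symmetric difference; without loss of generality $p\in W_1\setminus W_2$, so $E_p$ is a factor in $E_{W_1,F}$ while $I-E_p$ is a factor in $E_{W_2,F}$. By commutativity I can rearrange the product $E_{W_1,F}E_{W_2,F}$ so these two factors land next to each other, producing $E_p(I-E_p)=0$ in the middle. For (4), I start from the trivial identity
\[I = \prod_{p\in F}\bigl(E_p + (I-E_p)\bigr)\]
and expand by distributivity. This produces $2^{|F|}$ summands indexed by the choice of $E_p$ or $I-E_p$ at each position, i.e.\ by subsets $W\subseteq F$; using commutativity to reorder each summand into the canonical form of $E_{W,F}$ yields the desired $\sum_{W\subseteq F} E_{W,F}$.

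The main (and essentially only) obstacle is identifying commutativity of the $E_p$'s as the correct underlying structural fact; once that is noted, all four parts reduce to elementary manipulations with commuting projections, mirroring the familiar inclusion--exclusion decomposition that appears throughout Nica-covariant analysis of right LCM semigroups.
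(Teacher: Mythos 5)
Your proof is correct and follows essentially the same route as the paper: both arguments rest on the commutativity of the family $\{E_p\}$, derive (2) from $E_p$ being the unit of the ideal $\cA_p$, get (3) by locating an element of the symmetric difference, and get (4) by expanding $I=\prod_{p\in F}(E_p+(I-E_p))$. Your explicit justification that the $E_p$ commute (self-adjointness of $E_pE_q$ forced by the right LCM condition) is a small addition the paper leaves implicit, but it does not change the argument.
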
 

\begin{proof} Since  $\{E_p: p\in F\}$ is a family of  commuting projections, their products (and products of their orthogonal complements $I-E_q$) are  projections. For $a\in \cA_p$ we have $a(I-E_p)=(I-E_p)a=0$ and thus $a E_{W,F}=E_{W,F}a=0$ whenever $p\notin W$. In particular, for any $W_1\neq  W_2\subseteq F$, there exists $p\in F$ that is an element for exactly one of $W_1, W_2$. Without loss of generality, assume $p\in W_1$ but $p\notin W_2$. We have $E_{W_1, F} E_p=E_{W_1, F}$, but $E_p E_{W_2, F}=0$. Therefore,  $E_{W_1, F} E_{W_2, F}=0$. Finally, $I=\prod_{p\in F} (E_p + (I-E_p))$. Expanding the product derives the desired equality. 
\end{proof} 

\begin{proposition}\label{prop.positive.ucp} $K$ is positive  if and only if $\phi$ is completely positive.
\end{proposition}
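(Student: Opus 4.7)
The forward implication is immediate: specializing to $p_1 = \cdots = p_n = e$ yields $K(e, a_i^* a_j, e) = \phi(a_i^* a_j)$, so positivity of $K$ forces $[\phi(a_i^* a_j)] \geq 0$ for arbitrary $a_i \in \cA$, which is exactly complete positivity of $\phi$.

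For the converse, my pivotal observation is that each $E_p = \alpha_p(1)$ is \emph{central} in $\cA$. Indeed, since $\cA_p$ is a two-sided ideal whose unit is $E_p$, for any $a \in \cA$ both $aE_p$ and $E_pa$ lie in $\cA_p$, so $aE_p = E_p a E_p = E_p a$. Consequently, for any finite set $F = \{p_1,\ldots,p_n\}$ all the projections $E_{W,F}$ of Lemma \ref{lm.EWF} are central. Moreover, when $W \subseteq F$ has a common upper bound, writing $\bigcap_{p \in W} pP = r_W P$, one has $\prod_{p \in W} E_p = E_{r_W}$, so $E_{W,F} = E_{r_W}\prod_{p \notin W}(I - E_p) \in \cA_{r_W}$; otherwise $E_{W,F} = 0$.

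Now fix $p_1,\ldots,p_n$ and $a_i \in \cA E_{p_i}$, set $F = \{p_1, \ldots, p_n\}$, and insert $I = \sum_W E_{W,F}$ between $a_i^*$ and $a_j$. By centrality and the orthogonality $E_{W_1,F} E_{W_2,F} = 0$ for $W_1 \neq W_2$, all cross terms vanish, while Lemma \ref{lm.EWF}(2) restricts the surviving terms to $W$'s containing both $p_i$ and $p_j$, giving
\[
a_i^* a_j \;=\; \sum_{W \supseteq \{p_i,p_j\}} a_i^* a_j \, E_{W,F}.
\]
For each such $W$, necessarily with common upper bound $r_W$, we have $r_W \in p_iP \cap p_jP$ and $a_i^* a_j E_{W,F} \in \cA_{r_W}$. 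Setting $c_k^{(W)} := \alpha_{r_W}^{-1}(a_k E_{W,F})$ for $p_k \in W$, one checks using centrality and $E_{W,F}^2 = E_{W,F}$ that $\alpha_{r_W}^{-1}(a_i^* a_j E_{W,F}) = (c_i^{(W)})^* c_j^{(W)}$, whence Lemma \ref{lm.Kdef2} rewrites the corresponding kernel term as $T(p_i^{-1}r_W)\,\phi\bigl((c_i^{(W)})^* c_j^{(W)}\bigr)\,T(p_j^{-1}r_W)^*$.

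Summing over $W$ expresses $[K(p_i, a_i^* a_j, p_j)]$ as $\sum_W \tilde M_W$, where $\tilde M_W$ is the $n \times n$ matrix supported on rows and columns $\{i : p_i \in W\}$ and there equals $D_W \bigl[\phi((c_i^{(W)})^* c_j^{(W)})\bigr] D_W^*$ with $D_W = \diag(T(p_i^{-1}r_W))_{p_i \in W}$. Complete positivity of $\phi$ makes each central block positive, hence each $\tilde M_W \geq 0$, and so $\sum_W \tilde M_W \geq 0$. The main technical hurdle is arranging this decomposition so that the cross terms in $W_1, W_2$ collapse without residue; this is precisely where the centrality of the $E_p$'s, and thus the LCM ideal hypothesis in Definition \ref{df.respectLCM}, is indispensable.
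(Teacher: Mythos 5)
Your argument is correct and follows essentially the same route as the paper: decompose $a_i^*a_j$ via $\sum_{W\subseteq F}E_{W,F}=I$, observe that only the $W$ with a common upper bound $r_W$ containing both $p_i$ and $p_j$ survive, pull each surviving block back through $\alpha_{r_W}^{-1}$, and apply complete positivity of $\phi$; you have merely inlined the paper's Lemma \ref{lm.Kpos1} and made explicit the centrality of the $E_p$ that the paper uses implicitly through Lemma \ref{lm.EWF}. The only cosmetic quibble is that there are no genuine ``cross terms'' to collapse when inserting $I=\sum_W E_{W,F}$ between $a_i^*$ and $a_j$, so the orthogonality of the $E_{W,F}$ is not actually needed at that step.
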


\begin{proof} Notice that  $K(e,a,e)=\phi(a)$ for every  $a\in\cA$. Assume that $K$ is positive. Given any positive $n\times n$ operator matrix $A=[a_{i,j}]\in M_n(\cA)$, we can find $B=[b_{i,j}]\in M_n(\cA)$ with $A=BB^*$. We have
\[\phi(A)=\phi(BB^*)=  \sum_{k=1}^n [\phi(b_{i,k}b_{j,k}^*)]_{n\times n} = \sum_{k=1}^n [K(e, b_{i,k} b_{j,k}^*,e)]_{n\times n}\geq 0,\]
proving that $\phi$ is completely positive.

Conversely, assume that $\phi$ is completely positive. Let $F=\{p_1,\cdots, p_n\} \subset P$ and $a_i\in \cA_{p_i}$. We need to prove that the operator matrix $[K(p_i, a_i^* a_j, p_j)]$ 
is positive definite.
 By Lemma \ref{lm.EWF}(4)  and the linearity of $K$, 
\[[K(p_i, a_i^* a_j, p_j)]=\sum_{W\subseteq F} [K(p_i, a_i^* E_{W,F} a_j, p_j)].\]
Hence it suffices to prove that for all $W\subseteq F$,
\[[K(p_i, a_i^* E_{W,F} a_j, p_j)]\geq 0.\]

For each $W\subseteq F$ and each $j\notin W$ we have  $E_{W,F}a_j=0$. Therefore, the entries of the matrix $[K(p_i, a_i^* E_{W,F} a_j, p_j)]$ vanish outside the $|W|\times |W|$ submatrix 
\[S_W=[K(p_i, a_i^* E_{W,F} a_j, p_j)]_{i,j\in W}.\]

If  $\bigcap_{p_i\in W} p_iP=\emptyset$, then $\prod_{p_i\in W} E_{p_i}=0$ so that $E_{W,F}=0$, in which case $S_W$ is the zero matrix. Otherwise, by the right LCM condition, $\bigcap_{p_i\in W} p_iP=rP$ for some $r\in P$ and  $c_i:=a_i E_{W,F}\in \cA_r$. Since $E_{W,F}$ is a projection, $c_i^* c_j = a_i^* E_{W,F} a_j$. Therefore, $S_W=[K(p_i, c_i^* c_j, p_j)]\geq 0$ by  Lemma \ref{lm.Kpos1}. 
\end{proof}


We are now ready to prove   Theorem \ref{thm.dym.dilation}.

\begin{proof}[Proof of Theorem \ref{thm.dym.dilation}] Assume first  $\phi$ has a dilation; then it is a compression of a $\ast$-homomorphism, and thus must be unital completely positive.

Conversely, assume that $\phi$ is unital completely positive. Define a kernel system $K$ as in  Definition \ref{df.K}. Proposition \ref{prop.positive.ucp} proves that  $K$ is  positive. By  Proposition \ref{prop.Kbasic} and Theorem \ref{thm.kernel.dilation}, there exists a $\ast$-homomorphism $\pi:\cA\to\bh{K}$ and an isometric representation $V:P\to\bh{K}$ on some Hilbert space $\cK\supset \cH$, such that 
\[\phi(a)=P_\cH \pi(a)\bigg|_\cH \quad \text{ and } \quad T(p) = P_\cH V(p)\bigg|_\cH \]
 for all $a\in \cA$ and $p\in P$.
Moreover,  by Theorem~\ref{thm.kernel.dilation}, $(\pi,V)$ satisfies the weaker covariance relation $V(p)\pi(a)=\pi(\alpha_p(a))V(p)$. To see $(\pi,V)$ is in fact an isometric covariant representation, by Proposition \ref{prop.cov.equiv}, it suffices to prove that $V(p)V(p)^*=\pi(\alpha_p(1))$ for all $p\in P$. 


We claim that for every $\delta_{q,b}\otimes h\in \cK$,
\begin{equation}\label{eq:claim}
 V(p)^* \delta_{q,b}\otimes h=\begin{cases} 
\delta_{p^{-1} r, \alpha_{p^{-1}}(b)}\otimes T(q^{-1}r)^* h, &\mbox{ if } pP\cap qP=rP;\\0, & \mbox{ if } pP\cap qP=\emptyset .
\end{cases}
\end{equation}
Pick any $\delta_{s,c}\otimes k\in\cK$; then
\begin{align*}
\langle V(p)^* \delta_{q,b}\otimes h, \delta_{s,c}\otimes k\rangle 
&= \langle \delta_{q,b}\otimes h, V(p) \delta_{s,c}\otimes k\rangle \\
&= \langle \delta_{q,b}\otimes h, \delta_{ps,\alpha_p(c)}\otimes k\rangle \\
&= \langle K(ps, \alpha_p(c)^* b, q) h, k\rangle.
\end{align*}
Assume first $pP\cap qP=rP$. Then
\begin{align*}
\langle \delta_{p^{-1} r, \alpha_{p^{-1}}}(b)\otimes T(q^{-1}r)^* h, \delta_{s,c} \otimes k\rangle 
&= \langle K(s, c^* \alpha_{p^{-1}}(b), p^{-1}r) T(q^{-1}r)^* h, k\rangle \\
&= \langle K(ps, \alpha_p(c)^* b, r) T(q^{-1}r)^* h, k\rangle,
\end{align*}
where we have used the Toeplitz property of $K$ in the last line. By Lemma \ref{lm.Kpullout} we get
\begin{align*}
\langle \delta_{p^{-1} r, \alpha_{p^{-1}}}(b)\otimes T(q^{-1}r)^* h, \delta_{s,c} \otimes k\rangle 
&=
\langle K(ps, \alpha_p(c)^* b, r)T(q^{-1}r)^* h, k\rangle  \\
&= \langle K(ps, \alpha_p(c)^* b, q)h, k\rangle \\
&= \langle V(p)^* \delta_{q,b}\otimes h, \delta_{s,c}\otimes k\rangle.
\end{align*}
Therefore, $V(p)^* \delta_{q,b}\otimes h=\delta_{p^{-1} r, \alpha_{p^{-1}}(b)}\otimes T(q^{-1}r)^* h$, proving the first case in \eqref{eq:claim}. 
Assume next $pP\cap qP=\emptyset$. Then $psP\cap qP=\emptyset$, hence $\cA_{ps}\cA_q=\{0\}$, which implies that $\alpha_p(c)^* b =0$. Thus  $\langle V(p)^* \delta_{q,b}\otimes h, \delta_{s,c}\otimes k\rangle=0$ for arbitrary $\delta_{s,c}\otimes k\in \cK$. Hence $V(p)^* \delta_{q,b}\otimes h=0$,
 proving the second case in \eqref{eq:claim} and completing the proof of the claim.

In order to conclude that $V(p)V(p)^*=\pi(\alpha_p(1))$, since both sides are bounded linear operators, and since the dilation is minimal, it suffices to show that  $V(p)V(p)^* \delta_{q,b}\otimes h$ equals $\pi(\alpha_p(1)) \delta_{q,b}\otimes h = \delta_{q,\alpha_p(1) b}\otimes h$ for all $(q,b)\in \cX$ and $h\in \cH$.

Assume first $pP\cap qP=rP$. Then equation \eqref{eq:claim} shows that
\begin{align*}
V(p)V(p)^* \delta_{q,b}\otimes h &= V(p) \delta_{p^{-1} r, \alpha_{p^{-1}}(b)}\otimes T(q^{-1}r)^* h \\
&=  \delta_{r, \alpha_p(\alpha_{p^{-1}}( b))}\otimes T(q^{-1}r)^* h \\
&=\delta_{r, \alpha_p(1)b}\otimes T(q^{-1}r)^* h.
\end{align*}
Now let  $\delta_{s,c}\otimes k\in\cK$ and compute,
\begin{align*}
\langle \delta_{r, \alpha_p(1) b}\otimes T(q^{-1}r)^* h, \delta_{s,c}\otimes k\rangle &= \langle K(s, c^* \alpha_p(1) b, r) T(q^{-1}r)^* h, k\rangle \\
&= \langle K(s, c^* \alpha_p(1) b, q) h, k\rangle \\
&= \langle \delta_{q, \alpha_p(1) b}\otimes h, \delta_{s,c}\otimes k\rangle.
\end{align*}
where we have applied Lemma \ref{lm.Kpullout} in the second equality. So in this case we have 
\[
V(p)V(p)^* \delta_{q,b}\otimes h= \delta_{r, \alpha_p(1) b}\otimes T(q^{-1}r)^* h = \delta_{q, \alpha_p(1) b}\otimes h = : \pi(\alpha_p(1)) (\delta_{q,b}\otimes h).
\]

Assume now that $pP\cap qP=\emptyset$.  Equation \eqref{eq:claim} shows that $ V(p)^* \delta_{q,b}\otimes h=0$ and hence $V(p)V(p)^* \delta_{q,b}\otimes h =0$. On the other hand, $(q,b) \in \cX$ implies $\alpha_q(1) b =b$ so 
$\alpha_p(1)b = \alpha_p(1) \alpha_q(1) b=0$. Recall now that, by definition,
\[\langle \delta_{q,0}\otimes h, \delta_{r,a}\otimes k\rangle =\langle K(r,0,q) h, k\rangle,\]
for every $\delta_{r,a}\otimes k\in \cK_0$, and 
since $K(r,0,q)=0$,  we conclude that  in the present case
 $\pi(\alpha_p(1)) (\delta_{q,b}\otimes h) = 0 = \delta_{q,\alpha_p(1) b}\otimes h$.
Hence $V(p)V(p)^*=\pi(\alpha_p(1))$. 

 We know from Theorem \ref{thm.kernel.dilation} that the dilation satisfies the weak covariance condition and now that we have verified $V(p)V(p)^*=\pi(\alpha_p(1))$ we may use Proposition \ref{prop.cov.equiv} to conclude that $(\pi, V)$ is an isometric covariant representation. The minimality of $(\pi,V)$ and its uniqueness are established by Theorem \ref{thm.kernel.dilation}, completing the proof.
\end{proof}

\section{Dilations on Right LCM Semigroups}\label{sect.exLCM}

\subsection{Nica-covariance and dilation}

The main motivation of this paper comes from the recent development of dilation theory on right LCM semigroups in \cite{BLi2019} where it is shown that a contractive representation $T:P\to\bh{H}$ admits an isometric Nica-covariant dilation if and only if 
\begin{equation}\label{eq.dilation}
\sum_{U\subseteq F} (-1)^{|U|} T(\vee U)T(\vee U)^* \geq 0 \qquad \text{for all finite }F\subset P. \tag{$\diamond$}
\end{equation}

In trying to interpret this dilation result, we noticed the key role played by the semigroup dynamical system $(\cD_P, P, \alpha)$ based on the diagonal C*-algebra $\cD_P:=\overline{\lspan}\{E_p=V(p)V(p)^*: p\in P\}$ of the semigroup $\mathrm{C}^*$-algebra. Recall that $\cD_P:=\overline{\lspan}\{E_p=V(p)V(p)^*: p\in P\}$ is commutative by the Nica-covariance condition and that $P$ acts on $\cD_P$ by $\alpha_p(x)=V(p)xV(p)^*$. The system  $(\cD_P, P, \alpha)$ is the template we used for the definition of right LCM dynamical systems. Indeed,  Nica-covariance implies that 
\begin{align*}
\alpha_p(1)\alpha_q(1) &= E_p E_q 
= \begin{cases}
E_r = \alpha_r(1), & \mbox{ if } pP\cap qP=rP; \\
0, & \mbox{ if } pP\cap qP=\emptyset.
\end{cases}
\end{align*}
Using this one can easily check that $\alpha_p(\cD_P)$ is an ideal in $\cD_P$, showing  that $(\cD_P,P,\alpha)$ is a right LCM dynamical system in the sense of Definition~\ref{df.respectLCM}.

\begin{proposition}\label{prop.positive.DP} Let $\phi:\cD_P\to\bh{H}$ be a unital $\ast$-preserving linear map. Then the following are equivalent:

\begin{enumerate}
\item $\phi$ is positive\tcb{.}
\item $\phi$ is  completely positive.
\item For each finite $F\subset P$ and $W\subseteq F$,
 \[\phi\left(\bigg( \prod_{f\in W} E_f \bigg)\bigg( \prod_{f\in F\backslash W} (I-E_f) \bigg)\right)\geq 0.\]
\item For each finite $F\subset P$
\[\phi\bigg(\prod_{f\in F} (I-E_f) \bigg)\geq 0.\]
\end{enumerate}
\end{proposition}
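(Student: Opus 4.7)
My plan is to close the cycle (1) $\Rightarrow$ (2) $\Rightarrow$ (3) $\Rightarrow$ (4) $\Rightarrow$ (1); the implication (2) $\Rightarrow$ (1) is trivial. The equivalence (1) $\Leftrightarrow$ (2) is a classical fact: any positive linear map from a commutative $\mathrm{C}^*$-algebra into a $\mathrm{C}^*$-algebra is automatically completely positive, which applies here because $\cD_P$ is commutative by Nica-covariance, as recorded in the discussion preceding the proposition. For (2) $\Rightarrow$ (3), Lemma \ref{lm.EWF}(1) shows that each $E_{W,F}$ is an orthogonal projection, hence a positive element of $\cD_P$, so completely positive maps send it to a positive operator. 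The implication (3) $\Rightarrow$ (4) is immediate by taking $W = \emptyset$, since $E_{\emptyset,F} = \prod_{f\in F}(I - E_f)$.

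The main work is (4) $\Rightarrow$ (1), which I would split as (4) $\Rightarrow$ (3) followed by (3) $\Rightarrow$ (1). The strategy for (4) $\Rightarrow$ (3) exploits the $P$-action $\alpha$: for $W \subseteq F$ with $rP = \bigcap_{w \in W} wP$ (if this intersection is empty then $E_{W,F} = 0$ and there is nothing to prove), one factors
\[ E_{W,F} = E_r \prod_{f \in F\setminus W}(I - E_f), \]
and uses the identity $E_r(I - E_f) = \alpha_r(I - E_{g_f})$, valid whenever $r \vee f = rg_f$ exists (with the factor equal to $\alpha_r(I) = E_r$ when it does not), to rewrite
\[ E_{W,F} = \alpha_r\Bigl(\prod_{g \in G}(I - E_g)\Bigr) \]
for a suitable finite $G \subseteq P$. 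Establishing (3) thus reduces to showing that the hypothesis (4) on $\phi$ propagates through the shift by $\alpha_r$. This is the main obstacle, since $\phi$ and $\alpha_r$ have no \emph{a priori} commutation; one must exploit that each $X_{rg}$ is contained in $X_r$, very likely combined with a telescoping identity such as $E_{W\setminus\{w\},F\setminus\{w\}} = E_{W,F} + E_{W\setminus\{w\},F}$ (obtained from $E_w + (I-E_w) = I$) and an induction on $|F|$.

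Once (3) is in hand, the passage (3) $\Rightarrow$ (1) is routine. By Lemma \ref{lm.EWF}, the non-zero $E_{W,F}$'s form the minimal projections of the finite-dimensional commutative $\ast$-subalgebra $\cB_F := \mathrm{C}^*(\{E_f : f \in F\}) \subseteq \cD_P$, so every positive element of $\cB_F$ is a non-negative combination of these minimal projections. Hence (3) forces $\phi$ to be positive on each $\cB_F$. Since $\{E_p\}_{p \in P}$ generates $\cD_P$ as a $\mathrm{C}^*$-algebra, the directed union $\bigcup_F \cB_F$ is a dense $\ast$-subalgebra of $\cD_P$, and a positive linear map on a unital $\mathrm{C}^*$-algebra is automatically bounded by $\|\phi(1)\|$, so positivity on this dense $\ast$-subalgebra extends by continuity to positivity on all of $\cD_P$.
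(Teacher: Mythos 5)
Your handling of (1) $\Leftrightarrow$ (2), (2) $\Rightarrow$ (3), (3) $\Rightarrow$ (4), and (3) $\Rightarrow$ (1) agrees with the paper. The genuine gap is exactly the step you defer as ``the main obstacle'': transporting hypothesis (4) through $\alpha_r$ in the proof of (4) $\Rightarrow$ (3). You never supply this transport, and no telescoping identity or induction on $|F|$ can supply it, because for a bare unital $\ast$-preserving linear map the implication (4) $\Rightarrow$ (3) is false. Take $P=\mathbb{N}$: the projections $E_n$ are nested, so $\prod_{f\in F}(I-E_f)=I-E_{\min F}$ and (4) only asserts $\phi(E_n)\le I$ for all $n$. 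Setting $\phi(I)=I$, $\phi(E_1)=0$, $\phi(E_2)=\tfrac 12 I$ and $\phi(E_n)=0$ for $n\ge 3$ defines (after extending linearly and then by continuity, which is possible since $\|\phi(x)\|\le 2\|x\|$ on the span) a unital $\ast$-preserving map satisfying (4); yet $E_{\{1\},\{1,2\}}=E_1(I-E_2)=E_1-E_2$ is a positive element with $\phi(E_1-E_2)=-\tfrac12 I$, so (1) and (3) fail. Your proposed telescoping only rewrites $\phi(E_1-E_2)$ as the difference $\phi(I-E_2)-\phi(I-E_1)$ of two operators that (4) makes positive, and a difference of positives need not be positive.

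The missing idea --- and the way the paper's own proof closes this step --- is a covariance hypothesis that does not appear in the literal statement: after factoring $E_{W,F}=\alpha_r\bigl(\prod_{f\in F_0}(I-E_f)\bigr)$ exactly as you do, the paper computes $\phi(E_{W,F})=T(r)\,\phi\bigl(\prod_{f\in F_0}(I-E_f)\bigr)\,T(r)^*\ge 0$, i.e.\ it uses the relation $\phi(\alpha_r(x))=T(r)\phi(x)T(r)^*$ for a contractive representation $T$ of $P$. Conjugation by the contraction $T(r)$ is precisely what carries positivity from the $W=\emptyset$ case of (4) to general $W$. Every map to which the proposition is subsequently applied (e.g.\ $\phi_T(E_p)=T(p)T(p)^*$ in Lemma \ref{lem:extension}, or the $\phi$ of a contractive covariant pair) does satisfy this relation, so your argument can be completed by importing it as a standing assumption; without it, (4) simply does not imply the other three conditions.
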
 

\begin{proof} Recall that  $\cD_P$ is a commutative $\mathrm{C}^*$-algebra, so (1) and (2) are equivalent. 

For a finite subset $F\subset P$ and $W\subseteq F$, define 
\[
E_{W,F}=\bigg(\prod_{p\in W} E_p \bigg)\bigg( \prod_{p\in F\backslash W} (I-E_p)\bigg).
\]
For fixed $F$, one can apply Lemma \ref{lm.EWF} to show that the family $\{E_{W,F}\}_{W\subseteq F}$ consists of orthogonal projections with mutually orthogonal ranges that sum up to $I$. An element $x\in \lspan\{E_p: p\in F\}$ can therefore be written as $x=\sum_{W\subseteq F} \lambda_{W,F} E_{W,F}$, and it is positive if and only if $\lambda_{W,F}\geq 0$ for all $W\subseteq F$ such that $E_{W,F} \neq 0$. 
Therefore, $\phi$ is positive if and only if $\phi(E_{W,F})\geq 0$ for all finite $F\subset P$ and $W\subseteq F$, which proves the equivalence between (1) and (3). 

It is easy to see that (3)  implies (4) by taking $W=\emptyset$. 
To see the converse notice that, by  Nica-covariance, either $\prod_{f\in W} E_f =0$, in which case $\phi(E_{W,F})=0$ and (3) holds, or else $\prod_{f\in W} E_f =E_r$ for  $rP = \bigcap_{f\in W} fP$. In the latter case, for each $f\in F\backslash W$, 
\[E_r E_f =\begin{cases} E_s, &\mbox{ if } rP\cap fP=sP; \\
0, &\mbox{ if } rP\cap fP=\emptyset .\\
\end{cases} \]
Therefore, 
\[V(r)^*(I-E_f) = \begin{cases} (I-E_{r^{-1}s(f)}) V(r)^*, &\mbox{ if } rP\cap fP=s(f) P; \\
V(r)^*, &\mbox{ if } rP\cap fP=\emptyset .\\
\end{cases} \]
Next let $F_0=\{r^{-1}s(f): s(f) P=rP\cap fP, \  f\in F\backslash W, \text{ and } rP\cap fP \neq \emptyset\}$. We have
\begin{align*}
E_r \prod_{f\in F\backslash W} (I-E_f) &= V(r) \prod_{f\in F_0} (I-E_f)V(r)^*. \\
&= \alpha_r( \prod_{f\in F_0} (I-E_f)).
\end{align*}
By (4), $\phi( \prod_{f\in F_0} (I-E_f))\geq 0$. Therefore, 
\begin{align*}
\phi(E_{W,F}) &= \phi(\alpha_r(\prod_{f\in F_0} (I-E_f))) 
= T(r) \phi(\prod_{f\in F_0} (I-E_f)) T(r)^* \geq 0,
\end{align*}
so (3) holds in this case too.
\end{proof} 

{For each contractive representation  $T:P\to\bh{H}$ one can define $\phi_T(E_p)=T(p)T(p)^*$. Since the projections $E_p$ are linearly independent this can be extended by linearity to all of $\lspan\{E_p=V(p)V(p)^*: p\in P\}$. It is not immediate that $\phi_T$ can be extended to a contractive positive map on $\cD_P$. The following lemma gives a criterion for when this is possible.}

\begin{lemma} \label{lem:extension} Suppose $T:P\to\bh{H}$ is a contractive representation, let $\phi_T(E_p):=T(p)T(p)^*$ and extend $\phi_T$ linearly to $\lspan\{E_p=V(p)V(p)^*: p\in P\}$. The following are equivalent
\begin{enumerate}
\item The map $\phi_T$  extends to a positive map on $\cD_P=\overline{\lspan}\{E_p=V(p)V(p)^*: p\in P\}$.
\item For each finite $F\subset P$ and $W\subseteq F$,
 \[\phi_T\left(\bigg( \prod_{f\in W} E_f \bigg)\bigg( \prod_{f\in F\backslash W} (I-E_f) \bigg)\right)\geq 0.\]
\item For each finite $F\subset P$
\[\phi_T\bigg(\prod_{f\in F} (I-E_f) \bigg)\geq 0.\]
\end{enumerate}
\end{lemma}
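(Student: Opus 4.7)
My plan is to establish the equivalences in the cycle (1) $\Rightarrow$ (2) $\Rightarrow$ (3) $\Rightarrow$ (2) $\Rightarrow$ (1), with the last implication carrying most of the weight. The two easy implications are: (1) $\Rightarrow$ (2), because each $E_{W,F}$ is a projection in $\cD_P$ (being a product of commuting projections) and any positive linear extension of $\phi_T$ must send it to a positive operator, which necessarily agrees with the formal linear extension; and (2) $\Rightarrow$ (3), which follows at once by taking $W = \emptyset$.

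For (3) $\Rightarrow$ (2) I plan to mirror the argument already used in the proof of Proposition \ref{prop.positive.DP}. The crucial observation is that the formal linear extension of $\phi_T$ already satisfies a covariance-like identity on generators: $\phi_T(\alpha_r(E_q)) = \phi_T(E_{rq}) = T(rq)T(rq)^* = T(r)\phi_T(E_q)T(r)^*$, so by linearity $\phi_T(\alpha_r(x)) = T(r)\phi_T(x)T(r)^*$ for every $x \in \lspan\{E_p\}$. Given a finite $F$ and $W \subseteq F$: if $\bigcap_{f \in W} fP = \emptyset$ then $E_{W,F} = 0$ and there is nothing to check; otherwise $\bigcap_{f \in W} fP = rP$ for some $r$, and by Nica-covariance one can rewrite $E_{W,F} = \alpha_r\bigl(\prod_{f \in F_0}(I - E_f)\bigr)$ for a suitable finite $F_0 \subset P$, exactly as in that proof. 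Applying the covariance identity then gives $\phi_T(E_{W,F}) = T(r)\phi_T\bigl(\prod_{f \in F_0}(I - E_f)\bigr)T(r)^* \geq 0$ by (3).

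The main work is (2) $\Rightarrow$ (1), constructing a positive extension of $\phi_T$ to $\cD_P$. For each finite $F \subset P$, the nonzero members of $\{E_{W,F} : W \subseteq F\}$ are pairwise orthogonal projections summing to $I$, so they span a finite-dimensional commutative $\mathrm{C}^*$-subalgebra $\cB_F$ of $\cD_P$. I would define $\tilde\phi_T^F(E_{W,F}) := \phi_T(E_{W,F})$ and extend linearly; by (2) this is a positive linear map on $\cB_F$. Two consistency checks are then required: (i) that $\tilde\phi_T^F(E_p) = T(p)T(p)^*$ for each $p \in F$, which follows from the identity $E_p = \sum_{W \ni p} E_{W,F}$ in $\cD_P$ together with an inclusion-exclusion computation collapsing $\sum_{W \ni p} \phi_T(E_{W,F})$ (where each $\phi_T(E_{W,F})$ is itself an alternating sum $\sum_{S \subseteq F \setminus W}(-1)^{|S|} T(r(W\cup S))T(r(W\cup S))^*$) down to $T(p)T(p)^*$; and (ii) that for $F \subseteq F'$ the maps $\tilde\phi_T^F$ and $\tilde\phi_T^{F'}$ agree on $\cB_F \subseteq \cB_{F'}$, which is immediate once one writes each $E_{W,F}$ as a sum of the finer orthogonal projections $E_{W',F'}$. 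Together these yield a well-defined positive unital linear map $\tilde\phi_T$ on $\bigcup_F \cB_F$, which is dense in $\cD_P$ since it contains $\lspan\{E_p\}$. Positivity and unitality on a dense unital $\ast$-subalgebra of a commutative $\mathrm{C}^*$-algebra force contractivity on self-adjoint elements, so $\tilde\phi_T$ extends by continuity to a positive linear map on all of $\cD_P$.

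The principal obstacle is the inclusion-exclusion identity in step (i), which reduces to the combinatorial fact $\sum_{W' \subseteq U \setminus \{p\}} (-1)^{|U \setminus \{p\}| - |W'|} = [U = \{p\}]$ after swapping the order of summation; once this collapse is verified, the compatibility in (ii) and the final continuous extension are routine.
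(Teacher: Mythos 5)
Your proof is correct and follows essentially the same route as the paper's: the easy implications are handled identically, (3) $\Rightarrow$ (2) mirrors Proposition \ref{prop.positive.DP} (with the worthwhile explicit observation that $\phi_T(\alpha_r(x)) = T(r)\phi_T(x)T(r)^*$ already holds for the formal linear extension), and (2) $\Rightarrow$ (1) rests on the same decomposition into the pairwise orthogonal projections $E_{W,F}$, the paper getting contractivity on $\lspan\{E_p\}$ via a $2\times 2$ operator-matrix trick where you invoke positivity of unital maps on the finite-dimensional algebras $\cB_F$. One small economy: your consistency checks (i) and (ii) are automatic, since each $\tilde\phi_T^F$ is just the restriction of the already well-defined linear map $\phi_T$ to $\cB_F \subseteq \lspan\{E_p : p\in P\}$ and $E_p = \sum_{W\ni p} E_{W,F}$ there, so the inclusion-exclusion computation can be dispensed with.
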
 

\begin{proof} The equivalence among (2) and (3) follows in the same way as the proof of Proposition \ref{prop.positive.DP}. It is obvious that (1) implies (2) since each $\bigg( \prod_{f\in W} E_f \bigg)\bigg( \prod_{f\in F\backslash W} (I-E_f) \bigg)\geq 0$. For the converse, pick any $x=\sum_{p\in F} \lambda_p E_p$ with $\|x\| =1$. By Lemma~\ref{lm.EWF}, 
\[
 x = \left(\sum_{W\subseteq F} E_{W,F}\right) x = \sum_{W\subseteq F} \sum_{p\in F} \lambda_p E_{W,F} E_p= \sum_{W\subseteq F} (\sum_{p\in W} \lambda_p) E_{W,F}.
\]
Let $\mu_W=\sum_{p\in W} \lambda_p$. Since $\{E_{W,F}\}$ is a collection of pairwise orthogonal projections, 
\[
\|x\|=\max_{W\subset F, E_{W,F}\neq 0} |\mu_W|  =  1.
\]
To prove $\phi_T$ can be extended by continuity to $\cD_P$, it suffices to prove that $\|\phi_T(x)\|\leq 1$ and thus $\phi_T$ is contractive. This is equivalent to proving that the $2\times 2$ operator matrix $\begin{bmatrix} I & \phi_T(x) \\ \phi_T(x)^* & I\end{bmatrix}$ is positive definite. Since
\[ 
\begin{bmatrix} I & \phi_T(x) \\ \phi_T(x)^* & I\end{bmatrix}
= \sum_{W\subseteq F} \begin{bmatrix} \phi_T(E_{W,F}) & \mu_W \phi_T(E_{W,F}) \\ \overline{\mu_W} \phi_T(E_{W,F}) & \phi_T(E_{W,F})\end{bmatrix}
\]
and  each $\phi_T(E_{W,F})$ is positive by (2), this operator matrix is always positive.
\end{proof}

As an application of our Theorem~\ref{thm.dym.dilation}, we recover
 the following dilation theorem from \cite{BLi2019}.

\begin{theorem}\cite[Theorem~3.9]{BLi2019}\label{thm.LCM.Dilation} Let $T:P\to\bh{H}$ be a contractive unital representation of a right LCM semigroup. The following are equivalent:
\begin{enumerate}
\item $T$ has an isometric Nica-covariant dilation;
\item For any finite set $F\subset P$, 
\[\sum_{U\subseteq F} (-1)^{|U|} T(\vee U)T(\vee U)^* \geq 0.\]
\end{enumerate}
\end{theorem}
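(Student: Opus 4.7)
The plan is to reduce the statement to Theorem \ref{thm.dym.dilation} applied to the canonical right LCM dynamical system $(\mathcal{D}_P, P, \alpha)$ with $\alpha_p(x) = V(p) x V(p)^*$. For the direction $(1) \Rightarrow (2)$, suppose $W$ is an isometric Nica-covariant dilation of $T$ on $\mathcal{K} \supseteq \mathcal{H}$ with $\mathcal{H}$ co-invariant for $W$. Then $T(p)T(p)^* = P_\mathcal{H} W(p)W(p)^* P_\mathcal{H}$, and because the projections $W(f)W(f)^*$ pairwise commute by Nica-covariance, $\prod_{f\in F}(I - W(f)W(f)^*)$ is itself a projection on $\mathcal{K}$. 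Expanding this product as $\sum_{U \subseteq F}(-1)^{|U|} W(\vee U)W(\vee U)^*$ (with summand $0$ when $\vee U$ fails to exist) and compressing to $\mathcal{H}$ yields the inequality $(\diamond)$.

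For the converse $(2) \Rightarrow (1)$, I would define $\phi(E_p) := T(p)T(p)^*$ and extend linearly to $\lspan\{E_p : p \in P\} \subset \mathcal{D}_P$. Expanding $\prod_{f \in F}(I - E_f)$ and invoking $\prod_{u\in U} E_u = E_{\vee U}$ (or $0$), condition $(\diamond)$ translates exactly to condition (3) of Lemma \ref{lem:extension}, so $\phi$ extends to a positive unital map on $\mathcal{D}_P$, which is completely positive because $\mathcal{D}_P$ is commutative. Moreover, $(\phi, T)$ is a contractive covariant representation of $(\mathcal{D}_P, P, \alpha)$: on the spanning set, $\alpha_p(E_q) = E_{pq}$ and $T(p) \phi(E_q) T(p)^* = T(pq) T(pq)^* = \phi(E_{pq})$, and the covariance relation extends by linearity and continuity.

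Theorem \ref{thm.dym.dilation} then produces an isometric covariant dilation $(\pi, W)$ of $(\phi, T)$ on some Hilbert space $\mathcal{K} \supseteq \mathcal{H}$. Nica-covariance of $W$ follows directly from the covariance: $W(p) W(p)^* = W(p) \pi(1) W(p)^* = \pi(\alpha_p(1)) = \pi(E_p)$, and since $\pi$ is a $\ast$-homomorphism with $E_p E_q$ equal to $E_r$ or $0$ in $\mathcal{D}_P$, we obtain $W(p)W(p)^* W(q)W(q)^* = \pi(E_p E_q)$, which is $W(r)W(r)^*$ or $0$ according to whether $pP \cap qP = rP$ or $\emptyset$. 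The main point to handle carefully is the translation between $(\diamond)$ and the positivity condition of Lemma \ref{lem:extension}, particularly the bookkeeping around subsets $U$ for which $\vee U$ does not exist (which contribute trivially on both sides), and the well-definedness of $\phi$ on the spanning set, which relies on the fact that $pP = qP$ forces $q = pu$ for a unit $u$ and hence $T(u)$ is unitary. Once these points are dispatched, everything else follows mechanically from the framework built in the preceding sections.
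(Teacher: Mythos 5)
Your proposal is correct and follows essentially the same route as the paper: the forward direction by expanding $\prod_{f\in F}(I-V_fV_f^*)$ via inclusion--exclusion and compressing to the co-invariant subspace $\cH$, and the converse by defining $\phi(E_p)=T(p)T(p)^*$, extending it to a positive (hence completely positive, by commutativity of $\cD_P$) map via Lemma~\ref{lem:extension} and Proposition~\ref{prop.positive.DP}, and then invoking Theorem~\ref{thm.dym.dilation}. Your extra remarks on well-definedness of $\phi$ and on recovering Nica-covariance from $V(p)V(p)^*=\pi(E_p)$ are correct details that the paper leaves implicit.
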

\begin{proof}
Suppose (1) holds. We may assume that the dilation $V$ is minimal and hence $\cH$ is co-invariant. Let $E_f := V_fV_f^*$. From the inclusion-exclusion principle, for every  finite $F\subset P$,
\[\prod_{f\in F} (I-E_f) = \sum_{U\subseteq F} (-1)^{|U|} E_{\vee U},\]
where we set $E_{\vee U}=0$ when $\bigcap_{p\in U} pP=\emptyset$, and $E_{\vee U}=E_r$ when $\bigcap_{p\in U} pP=rP$.  That (1) implies (2) is now easy to see because $\sum_{U\subseteq F} (-1)^{|U|} T(\vee U)T(\vee U)^*$  is the compression of $\prod_{f\in F} (I-E_f) $ to the co-invariant subspace $\cH$. 

Suppose now that $T$ is a contractive unital representation of $P$ such that (2) holds and define $\phi(E_r):=T(r)T(r)^*$.  Then condition (3) in Lemma~\ref{lem:extension} also holds. By condition (1) in Lemma~\ref{lem:extension} we can  extend $\phi$ to a contractive representation of $\cD_P$, which we also denote by $\phi$, such that $(\phi,T)$ is a contractive covariant pair. Since $T$ satisfies (2), Proposition \ref{prop.positive.DP} implies that $\phi$ is  positive, hence completely positive.  Theorem~\ref{thm.dym.dilation} does the rest.
\end{proof}

\subsection{Nica spectrum and boundary quotient}

The diagonal $\cD_P$ is a unital commutative $\mathrm{C}^*$-algebra, which is isomorphic to $C(\Omega_P)$ for a compact Hausdorff space $\Omega_P$. The space $\Omega_P$ is called the Nica-spectrum of $P$, and one can refer to \cite{Nica1992, Laca1999, CrispLaca2007} for more detailed descriptions. The action $\alpha$ by injective endomorphisms of  $\cD_P$ induces an action $\hat\alpha$ of $P$ by surjective maps of $\Omega_P$ to itself. A compact subset $K\subset \Omega_P$ is called {\em \tcb{$\hat\alpha$}-invariant} if both $K$ and its complement are invariant, i.e. $\hat\alpha_p(K) \subseteq K$ and $\hat\alpha_p(\Omega_P\backslash K) \subseteq \Omega_P\backslash K$. Notice that in the case of a group one of the inclusions implies the other, but for semigroup actions, both inclusions have to be required.

\begin{proposition} Let $K\subset \Omega_P$ be a closed $\hat \alpha$-invariant  subset of $\Omega_P$. Then $(C(K),P,\alpha)$ is also a right LCM dynamical system.
\end{proposition}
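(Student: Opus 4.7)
The plan is to verify the conditions of Definition~\ref{df.respectLCM} by passing the action from $\cD_P\cong C(\Omega_P)$ to its quotient $C(K)$. Via Gelfand duality, identify $C(K)$ with $\cD_P/I_K$, where $I_K=\{f\in \cD_P: f|_K=0\}$. The forward invariance $\hat\alpha_p(K)\subseteq K$ translates into $\alpha_p(I_K)\subseteq I_K$: if $f\in I_K$ and $x\in K$, then $\alpha_p(f)(x)$ is determined by the value of $f$ at $\hat\alpha_p(x)\in K$, hence vanishes. Consequently, $\alpha_p$ descends to a $*$-endomorphism $\tilde\alpha_p:C(K)\to C(K)$, and $\tilde\alpha$ inherits the semigroup property from $\alpha$.

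Next, I would show $\tilde\alpha_p$ is injective, and this is where the complementary invariance $\hat\alpha_p(\Omega_P\setminus K)\subseteq \Omega_P\setminus K$ enters essentially. Suppose $\tilde\alpha_p([f])=0$, that is $\alpha_p(f)|_K=0$. For any $y\in K$, surjectivity of $\hat\alpha_p$ produces $x\in \Omega_P$ with $\hat\alpha_p(x)=y$; invariance of the complement forces $x\in K$ (otherwise $y=\hat\alpha_p(x)\in \Omega_P\setminus K$, a contradiction). Hence $f(y)=f(\hat\alpha_p(x))=0$, showing $f|_K=0$ and thus $[f]=0$.

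Finally, with $\tilde E_p:=\tilde\alpha_p(1_{C(K)})=E_p|_K$, the LCM condition on $\cD_P$ transfers directly:
\[\tilde E_p\tilde E_q \;=\; (E_pE_q)|_K \;=\;
\begin{cases} E_r|_K=\tilde E_r & \text{if } pP\cap qP=rP,\\ 0 & \text{if } pP\cap qP=\emptyset.\end{cases}\]
The ideal condition $\tilde\alpha_p(C(K))\subseteq C(K)$ is automatic because $C(K)$ is commutative and $\tilde\alpha_p(C(K))=\tilde E_p\,C(K)$ is a principal ideal cut out by the projection $\tilde E_p$. Combined with the previous two steps, this establishes that $(C(K),P,\tilde\alpha)$ is a right LCM dynamical system.

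The main obstacle is the careful bookkeeping at the level of $\Omega_P$: one must recognize that the two halves of $\hat\alpha$-invariance play distinct roles. Forward invariance $\hat\alpha_p(K)\subseteq K$ is what makes $\tilde\alpha_p$ well defined on the quotient, while invariance of the complement $\hat\alpha_p(\Omega_P\setminus K)\subseteq \Omega_P\setminus K$, together with the surjectivity of $\hat\alpha_p$, is precisely what is needed to obtain injectivity on $C(K)$. Once both are in hand, the LCM identities and ideal property descend essentially for free from the corresponding identities in $\cD_P$.
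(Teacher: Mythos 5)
Your proof is correct, but it runs dual to the paper's. The paper works entirely on the spectrum: setting $K_p=\hat\alpha_p(K)$, it uses the two invariance conditions to show that $K_p\cap K_q$ equals $K_r$ or $\emptyset$ according as $pP\cap qP=rP$ or $pP\cap qP=\emptyset$, identifies $\alpha_p(C(K))=C(K_p)$, and then invokes the product criterion of Proposition \ref{prop.equivLCM}. You instead realize $C(K)$ as the quotient $\cD_P/I_K$, push the endomorphisms and the projection identities $E_pE_q=E_r$ (or $0$) through the quotient map, and verify Definition \ref{df.respectLCM} directly. Your route has the advantage of making explicit the injectivity of the induced endomorphisms --- a requirement built into the definition of a right LCM dynamical system that the paper's proof leaves implicit --- and of isolating exactly where each half of the $\hat\alpha$-invariance is used. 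One caveat: which half does which job depends on your convention for $\hat\alpha_p$. You treat $\hat\alpha_p$ as the surjection satisfying $\alpha_p(f)=f\circ\hat\alpha_p$; since $\alpha_p$ is not unital, this formula only holds on the clopen support of $E_p$ (with $\alpha_p(f)$ vanishing off it), and you should say so, though your two arguments survive this bookkeeping. The paper's own proof uses the opposite convention, writing $\hat\alpha_p$ for the injection of $\Omega_P$ onto the clopen set $\hat\alpha_p(\Omega_P)=\Omega_{P,p}$, and under that reading the roles of forward invariance and complement invariance in your two steps are exactly interchanged. Either way the argument goes through; just fix one convention explicitly and record the formula relating $\alpha_p$ to $\hat\alpha_p$ before using it.
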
 

\begin{proof} Let $\Omega_{P,p}=\hat{\alpha}_p(\Omega_P)$ and $K_p=\hat{\alpha}_p(K)$.  Since $K$ and its complement are invariant for $\alpha$, 
\[K_p\cap K_q \subset \Omega_{P,p}\cap \Omega_{P,q}\cap K \quad p,q\in P.\]
If $pP\cap qP=\emptyset$, then $\Omega_{P,p}\cap \Omega_{P,q}=\emptyset$ and also $K_p\cap K_q$. If $pP\cap qP=rP$, then
\[\Omega_{P,p}\cap \Omega_{P,q}\cap K=\Omega_{P,r}\cap K=K_r\subseteq K_p\cap K_q.\]
and thus $K_p\cap K_q=K_r$. Therefore, $\cA_p=\alpha_p(C(K))=C(K_p)$ and $\cA_p\cA_q\subseteq C(K_p\cap K_q)$, which becomes $\{0\}$ when $pP\cap qP=\emptyset$ and becomes $C(K_r)$ when $pP\cap qP=rP$. 
\end{proof} 

Observe that $C(\Omega_P)=\cD_P=\overline{\lspan}\{E_p=V(p)V(p)^*\}$. Thus, for every contractive representation $(\phi,T)$ of the dynamical system, \[\phi(E_p)=\phi(\alpha_p(1))=T(p)T(p)^*.\]
This relation remains true when we consider $(C(K),P,\alpha)$ for any  invariant subset $K$. 
\begin{theorem} Let $K$ be a closed invariant  subset of $\Omega_P$ and $(\phi,T)$ be a contractive covariant representation of $(C(K),P,\alpha)$. Then the following are equivalent:
\begin{enumerate}
\item There exists an isometric covariant dilation $(\pi, V)$ of $(\phi,T)$.
\item $\phi$ is positive.
\item For each finite $F\subseteq P$, 
\[\sum_{U\subseteq F} (-1)^{|U|} T(\vee U)T(\vee U)^* \geq 0.\]

\end{enumerate} 
\end{theorem}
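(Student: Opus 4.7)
The plan is to combine Theorem~\ref{thm.dym.dilation} with an analog of Proposition~\ref{prop.positive.DP} for the quotient $C(K)$. First observe that $C(K)$ is commutative, so every positive $*$-preserving linear map on $C(K)$ is automatically completely positive. Consequently the implication (1)$\Rightarrow$(2) is immediate (a compression of the $*$-homomorphism $\pi$ is positive), and the converse (2)$\Rightarrow$(1) is exactly the statement of Theorem~\ref{thm.dym.dilation} applied to the commutative coefficient algebra $C(K)$.

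For (1)$\Rightarrow$(3), given an isometric covariant dilation $(\pi,V)$, I would set $a=1$ in the covariance relation to obtain $V(p)V(p)^* = \pi(\bar E_p)$, where $\bar E_p$ is the image of $E_p=\alpha_p(1)$ in $C(K)$. These projections commute and multiply according to the right LCM rule, so the standard inclusion--exclusion identity
\[
\prod_{f\in F}\bigl(I - V(f)V(f)^*\bigr) \;=\; \sum_{U\subseteq F}(-1)^{|U|} V(\vee U)V(\vee U)^*
\]
holds in $B(\cK)$, with the convention $V(\vee U)V(\vee U)^* := 0$ when $\bigcap_{p\in U} pP=\emptyset$. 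The left-hand side is a projection, hence positive, and compressing to the co-invariant subspace $\cH$ yields the inequality in~(3).

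For (3)$\Rightarrow$(2), the covariance relation applied to $a=1$ gives $\phi(\bar E_p) = T(p)T(p)^*$, and the same inclusion--exclusion identity, now inside $C(K)$, rewrites the sum in~(3) as $\phi\bigl(\prod_{f\in F}(I-\bar E_f)\bigr)$. Hence~(3) is precisely condition~(4) of Proposition~\ref{prop.positive.DP}, but for the map $\phi$ on the $C^*$-algebra $C(K)$ in place of $\cD_P$. I then plan to transport the proof of (4)$\Rightarrow$(1) of that proposition verbatim: the key identity
\[
\bar E_r\prod_{f\in F\setminus W}(I-\bar E_f) \;=\; \alpha_r\Bigl(\prod_{f\in F_0}(I-\bar E_f)\Bigr)
\]
holds inside $\cD_P$ by the same argument as before, and since $K$ is $\hat\alpha$-invariant the action $\alpha$ descends to $C(K)$, so the identity persists there. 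Combined with Lemma~\ref{lm.EWF} and the covariance relation $\phi\circ\alpha_r = T(r)\phi(\cdot)T(r)^*$, this gives $\phi(E_{W,F})\geq 0$ for every finite $F\subseteq P$ and $W\subseteq F$, so $\phi$ is positive on the $*$-subspace $\lspan\{\bar E_p: p\in P\}$, which is dense in $C(K)$ by passing the density $\cD_P=\overline{\lspan}\{E_p\}$ through the quotient $C(K)=\cD_P/J_K$; continuity of $\phi$ then extends positivity to all of $C(K)$.

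The main obstacle to be checked is precisely this last transport step: one must verify that the $E_{W,F}$-decomposition argument used for $\cD_P$ does not break when passing to $C(K)$, where distinct projections $E_p, E_q\in \cD_P$ may have equal images $\bar E_p=\bar E_q$. The point is that both Lemma~\ref{lm.EWF} and the inclusion--exclusion identity only use the commutative multiplication rules among the $\bar E_p$ and the covariance of $\phi$, and the key intertwining identity between $\bar E_r$ and $\alpha_r$ is inherited from $\cD_P$ via the quotient map, so no new obstruction arises.
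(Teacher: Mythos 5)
Your proposal is correct and follows essentially the same route as the paper: (1)$\Rightarrow$(2) by compression, (2)$\Leftrightarrow$(3) via the covariance identity $\phi(\bar E_p)=T(p)T(p)^*$ together with Proposition~\ref{prop.positive.DP}, and (2)$\Rightarrow$(1) from Theorem~\ref{thm.dym.dilation} after noting that positivity implies complete positivity on the commutative algebra $C(K)$. The only difference is cosmetic: the paper simply cites Proposition~\ref{prop.positive.DP}, whereas you re-run its argument inside the quotient $C(K)$ (equivalently, one can apply the proposition to $\phi$ composed with the quotient map $\cD_P\to C(K)$) and add a redundant direct proof of (1)$\Rightarrow$(3).
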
 
\begin{proof}
That (1) implies (2) is easy to see, and (2) $\iff$ (3) follows from covariance and   Proposition~\ref{prop.positive.DP}. 
Suppose now (2) holds. Since $C(K)$ is commutative $\phi$ is completely positive, hence 
(1) holds by Theorem~\ref{thm.dym.dilation}.
\end{proof}

An important invariant subspace of $\Omega_P$ is the boundary spectrum $\partial \Omega_P$, which consists of the closure of the set of maximal elements in $\Omega_P$ \cite[Definition 5.1]{BRSW2014}, see also \cite{Laca1999}. The boundary quotient corresponding to the boundary spectrum can be characterized via foundation sets. By definition, a finite set $F\subset P$ is a \emph{foundation set} if for all $p\in P$, there exists $f\in F$ such that $fP\cap pP\neq \emptyset$. Let $I_\infty$ be the ideal of $\cD_P$ generated by the projections $ \prod_{f\in F}(I-E_f)$ for every foundation set $F$. Then  
\[
C(\partial \Omega_P) =  \cD_P / I_\infty.
\]
The \emph{boundary quotient} $\mathrm{C}^*$-algebra $\cQ(P)$ is the universal $\mathrm{C}^*$-algebra generated by Nica-covariant representations $V$ with the additional requirement that 
\begin{equation}\label{eq:boundaryrelations}
\prod_{f\in F}(I-V_fV_f^*)=0 \mbox{ for every foundation set }F.
\end{equation}
This can also be realized as the cross-product $\partial \cD_P \rtimes P$. For more detailed studies of the boundary quotient $\mathrm{C}^*$-algebras, one may refer to \cite{Laca1999, CrispLaca2007, Starling2015}. 

\begin{example} Consider the free semigroup $\bF_n^+$. A Nica-covariant representation $V$ is  uniquely determined by its value on the $n$ generators $V_1, \cdots, V_n$. In this case, the Nica-covariance condition is equivalent to saying that the generating isometries $V_1,\cdots, V_n$ have orthogonal ranges, or in short, 
\[\sum_{i=1}^n V_i V_i^*\leq I.\] 
The universal $\mathrm{C}^*$-algebra (or the semigroup $\mathrm{C}^*$-algebra for $\bF_n^+$) is thus the Toeplitz-Cuntz algebra $\mathcal{TO}_n$. 
However, the boundary quotient requires further that $\sum_{i=1}^n V_i V_i^*=I$, and the boundary quotient semigroup $\mathrm{C}^*$-algebra for $\bF_n^+$ is the Cuntz algebra $\cO_n$. 
\end{example} 

A contractive representation of the boundary quotient dynamical system $(C(\partial \Omega_P), P, \alpha)$ is uniquely determined by a contractive representation $T:P\to\bh{H}$ with the additional requirement that for every foundation set $F\subset P$, 
\[\sum_{U\subseteq F} (-1)^{|U|} T(\vee U)T(\vee U)^* = 0.\]

As a corollary of Theorem~\ref{thm.LCM.Dilation}, we establish a condition for  a contractive representation to be dilated to an isometric representation of the boundary quotient. 

\begin{corollary}\label{cor.QDilation} Let $T:P\to\bh{H}$ be a contractive representation of a right LCM semigroup $P$ such that for each foundation set $F\subset P$, 
\[\sum_{U\subseteq F} (-1)^{|U|} T(\vee U)T(\vee U)^* = 0.\]
Then, the following are equivalent:
\begin{enumerate}
\item There exists a Nica covariant isometric dilation $V:P\to\bh{K}$ satisfying \eqref {eq:boundaryrelations}\tcb{.}
\item For each finite $F\subseteq P$, 
\[\sum_{U\subseteq F} (-1)^{|U|} T(\vee U)T(\vee U)^* \geq 0.\]
\end{enumerate}
\end{corollary}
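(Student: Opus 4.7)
The plan is as follows. The implication (1) $\Rightarrow$ (2) will be immediate: any Nica-covariant isometric dilation already fulfils Theorem~\ref{thm.LCM.Dilation}, so condition (2) holds regardless of whether $V$ satisfies the boundary relations. The real content is (2) $\Rightarrow$ (1), and my strategy is to reinterpret this as an equivariant dilation problem for the boundary quotient dynamical system $(C(\partial\Omega_P), P, \alpha)$, so that the theorem immediately preceding this corollary can be applied.

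First I will define $\phi_T(E_p) := T(p)T(p)^*$ and extend linearly to $\lspan\{E_p : p \in P\}$. Using condition (2) together with Lemma~\ref{lem:extension}, $\phi_T$ extends to a contractive positive map on $\cD_P$; since $\cD_P$ is commutative and $\phi_T$ is unital, this extension is automatically UCP. A direct check on generators, namely $T_p \phi_T(E_q)T_p^* = T_{pq}T_{pq}^* = \phi_T(E_{pq}) = \phi_T(\alpha_p(E_q))$, together with linearity and norm-continuity of the covariance identity, then shows that $(\phi_T, T)$ is a contractive covariant representation of $(\cD_P, P, \alpha)$.

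The key step is to descend $\phi_T$ to $C(\partial\Omega_P) = \cD_P / I_\infty$. Writing $p_F := \prod_{f \in F}(I-E_f)$, the standing hypothesis on foundation sets is exactly $\phi_T(p_F)=0$ for every foundation set $F$. Commutativity of $\cD_P$ gives $0 \leq a p_F \leq \|a\|\, p_F$ for every $0 \leq a \in \cD_P$, so positivity of $\phi_T$ forces $\phi_T(a p_F) = 0$; splitting general elements into real/imaginary and positive/negative parts and taking closures, $\phi_T$ annihilates $I_\infty$ and hence descends to a UCP map $\phi$ on $C(\partial\Omega_P)$. Applying the preceding theorem with $K = \partial\Omega_P$ will then produce an isometric covariant dilation $(\pi, V)$ of $(\phi, T)$. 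Nica-covariance of $V$ will follow from $V_p V_p^* = \pi(\alpha_p(1)) = \pi(E_p)$ combined with multiplicativity of $\pi$ on the projections $E_p$, and the boundary relations $\prod_{f \in F}(I - V_f V_f^*) = \pi(p_F) = 0$ will hold because $p_F$ already vanishes in $C(\partial\Omega_P)$. The main obstacle I anticipate is the descent step, where the foundation set hypothesis, positivity, and commutativity of $\cD_P$ have to be combined carefully; the remaining steps reduce to direct consequences of results already established in the paper.
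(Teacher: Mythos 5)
Your proposal is correct and follows essentially the route the paper intends: the paper states this as an immediate consequence of Theorem~\ref{thm.LCM.Dilation} and the preceding theorem on closed invariant subsets applied to $K=\partial\Omega_P$, and your descent of $\phi_T$ through the quotient $\cD_P/I_\infty$ is the same factorization device the paper itself uses in Corollary~\ref{cor.tensor.QP}. Your write-up merely makes explicit the details (Lemma~\ref{lem:extension}, positivity forcing $\phi_T$ to annihilate $I_\infty$, and reading off Nica-covariance and the boundary relations from $V(p)V(p)^*=\pi(E_p)$) that the paper leaves implicit.
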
 

\section{Dilation on $C(\Omega_P)\otimes \cA$}\label{sect.example}

In this section we build more examples of right LCM dynamical systems, starting from automorphic actions. Throughout  we fix a right LCM semigroup $P$ with trivial unit group $P^\ast=\{e\}$, so that whenever $pP\cap qP=rP$, the choice of $r$ is unique. As before, $\Omega_P$ denotes its Nica-spectrum and $\partial \Omega_P$ denotes the boundary of  $\Omega_P$.

\subsection{A dynamical system on $C(\Omega_P)\otimes \cA$}
Suppose  $\beta$ is a $\ast$-automorphic $P$-action on the unital C*-algebra $\cA$. The system $(\cA, P, \beta)$ is not necessarily a right LCM dynamical system, in fact it cannot be when  there exist elements $p,q$ such that $pP\cap qP=\emptyset$, because then $\beta_p(\cA)\beta_q(\cA)=\cA\neq \{0\}$. 
Our first goal is to construct a right LCM dynamical system on the $\mathrm{C}^*$-algebra  $\widetilde{\cA}:=C(\Omega_P)\otimes \cA $ of continuous $\cA$-valued functions on $\Omega_P$, and then
give conditions under which unital completely positive maps on $\cA$ can be lifted to $\widetilde{\cA}$. 
Recall the natural right LCM dynamical system $(C(\Omega), P, \alpha)$ where $\alpha_p(V_qV_q^*)=V_{pq} V_{pq}^*$, and define $\widetilde{\alpha}$ on $\widetilde{\cA}$ by 
\[\widetilde{\alpha_p}(f\otimes a) = \alpha_p(f)\otimes \beta_p(a).\]
In other words, $\widetilde{\alpha}=\alpha \otimes \beta$, which is clearly a $P$-action on $\widetilde{\cA}$ by $\ast$-endomorphisms. 

\begin{lemma} The semigroup dynamical system $(\widetilde{\cA}, P, \widetilde{\alpha})$ is a right LCM dynamical system. 
\end{lemma}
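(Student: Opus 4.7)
The plan is to verify the three conditions required by Definition~\ref{df.respectLCM} for $(\widetilde{\cA}, P, \widetilde{\alpha})$: that each $\widetilde{\alpha}_p$ is an injective $\ast$-endomorphism, that $\widetilde{\alpha}_p(\widetilde{\cA})$ is an ideal of $\widetilde{\cA}$, and that the projections $\widetilde{\alpha}_p(1)$ satisfy the right LCM identity. The key observation that makes everything work is that $\widetilde{\alpha}_p = \alpha_p \otimes \beta_p$ decomposes the dynamics into a factor on $C(\Omega_P)$ that already respects the right LCM and a factor on $\cA$ consisting of automorphisms, so the ``LCM behaviour'' is entirely carried by the first tensor factor while the second tensor factor contributes only the identity when it comes to projections.

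First I would verify injectivity: $\widetilde{\alpha}_p = \alpha_p \otimes \beta_p$ is the tensor product of two injective $\ast$-endomorphisms, so it is injective (the nuclearity of $C(\Omega_P)$ makes the choice of tensor norm irrelevant). Next I would verify the ideal condition. Since $\beta_p$ is a $\ast$-automorphism, $\beta_p(\cA) = \cA$, so
\[
\widetilde{\alpha}_p(\widetilde{\cA}) = \alpha_p(C(\Omega_P)) \otimes \cA.
\]
Because $(C(\Omega_P), P, \alpha)$ is a right LCM dynamical system, $\alpha_p(C(\Omega_P))$ is an ideal of $C(\Omega_P)$, and tensoring an ideal with $\cA$ yields an ideal in the C*-tensor product $C(\Omega_P) \otimes \cA = \widetilde{\cA}$.

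Finally, for the projection identity, note that $\beta_p(1_{\cA}) = 1_{\cA}$ and $\alpha_p(1_{C(\Omega_P)}) = E_p$, so $\widetilde{\alpha}_p(1) = E_p \otimes 1_{\cA}$. Then
\[
\widetilde{\alpha}_p(1)\,\widetilde{\alpha}_q(1) = (E_p \otimes 1_{\cA})(E_q \otimes 1_{\cA}) = (E_p E_q) \otimes 1_{\cA},
\]
and applying the right LCM property already established for $(C(\Omega_P),P,\alpha)$ gives $E_p E_q = E_r = \widetilde{\alpha}_r(1)/1_{\cA}$ when $pP \cap qP = rP$, and $E_p E_q = 0$ when $pP \cap qP = \emptyset$, which translates exactly to the required identity for $\widetilde{\alpha}$. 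There is no real obstacle here beyond bookkeeping: the argument is essentially the observation that the right LCM structure transfers through the tensor product whenever it is the non-automorphic factor that already encodes it.
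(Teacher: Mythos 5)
Your proof is correct, but it takes a slightly different route from the paper's. The paper does not check Definition~\ref{df.respectLCM} directly; instead it observes that $\widetilde{\alpha_p}(\widetilde{\cA})=\alpha_p(C(\Omega_P))\otimes\cA$, invokes the equivalent characterization of right LCM systems in terms of the product sets $\cA_p\cA_q$ from Proposition~\ref{prop.equivLCM}, asserts that this product relation ``is preserved by tensor products,'' and then applies Proposition~\ref{prop.equivLCM} a second time in the converse direction. You instead verify the definition itself: injectivity of $\alpha_p\otimes\beta_p$ (correctly noting that nuclearity of the commutative factor removes any tensor-norm ambiguity), the ideal property of $\alpha_p(C(\Omega_P))\otimes\cA$, and the projection identity $\widetilde{\alpha}_p(1)\widetilde{\alpha}_q(1)=(E_pE_q)\otimes 1_\cA$. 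Your route is arguably cleaner on one point: the paper's phrase about the product relation being preserved by tensor products glosses over the fact that $\cA_p\cA_q$ in Proposition~\ref{prop.equivLCM} is a set of \emph{individual} products $\{ab\}$ rather than a closed linear span, so verifying it for completed tensor products requires the same projection trick used in the proof of Proposition~\ref{prop.equivLCM}; by working with $E_p\otimes 1_\cA$ directly you sidestep this. The only cosmetic blemish is the expression $\widetilde{\alpha}_r(1)/1_{\cA}$, which is not standard notation --- what you mean, and should write, is simply $(E_pE_q)\otimes 1_\cA=E_r\otimes 1_\cA=\widetilde{\alpha}_r(1)$.
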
 

\begin{proof} Since $\beta_p$ is a $\ast$-automorphism, the image $\widetilde{\alpha_p}(\widetilde{\cA})$ is $\alpha_p(C(\Omega_P))\otimes \cA$. Since $(C(\Omega_P),P,\alpha)$ is a right LCM dynamical system, by Proposition \ref{prop.equivLCM},

\[\alpha_p(C(\Omega_P)) \alpha_q(C(\Omega_P)) = \begin{cases}
\alpha_r(C(\Omega_P)), &\mbox{ if } pP\cap qP=rP; \\
\{0\}, &\mbox{ if } pP\cap qP=\emptyset. 
\end{cases}
\]
This relation is preserved by   tensor products, and another application of Proposition \ref{prop.equivLCM} shows that
$(\widetilde{\cA}, P, \widetilde{\alpha})$ is a right LCM dynamical system.
\end{proof}

Next, given a unital completely positive map $\phi:\cA\to\bh{H}$ and a contractive representation $T:P\to\bh{H}$, we hope to construct a covariant representation $(\widetilde{\phi}, T)$ for the right LCM dynamical system $(\widetilde{\cA}, P, \widetilde{\alpha})$. Note  that $\phi$ and $T$ do not need to satisfy any covariance relation by themselves. Recall $C(\Omega_P)\cong \cD_P$ is the closed linear span of  the projections $E_p=V_p V_p^*$ and  define 
\[\widetilde{\phi}(E_p\otimes a) = T(p) \phi(\beta_p^{-1}(a)) T(p)^*.\] 
For every finite $F\subset P$, we define a map $\phi_F:\cA\to\bh{H}$ as follows. If a subset $U \subset F$ has an upper bound, we let $s_U $ be the least upper bound of $U$, so that $s_U P=\bigcap_{p\in U} pP$ and  $s_U$ is uniquely determined because we are assuming $P$ has no nontrivial units. If $U$ has no upper bound, so that $\bigcap_{p\in U} pP=\emptyset$, then by convention we say $\tcb{s_U} =\infty$ and we let $T(\infty) =0$. Then we define
\[\phi_F(a) = \sum_{U\subseteq F} (-1)^{|U|} T(s_U) \phi(\beta_{s_U}^{-1} (a)) T(s_U)^*.\]


\begin{proposition}\label{prop.tensor.UCP} Suppose $\beta$ is a $\ast$-automorphic $P$-action on $\cA$,  
 $\phi:\cA\to\bh{H}$ is a unital completely positive map and $T:P\to\bh{H}$ is a contractive representation. Let 
 $\widetilde{\phi}$ 
 and 
$\phi_F$ 
 be as above.
 The pair $(\widetilde{\phi},T)$ can be extended to a contractive covariant representation of $(\widetilde{\cA}, P, \widetilde{\alpha})$ with unital completely positive $\widetilde{\phi}$ if and only if  $\phi_F$ is completely positive for all finite $F\subset  P$. 
\end{proposition}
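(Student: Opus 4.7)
The bridge between the two conditions is the identity
\[p_F := \prod_{f\in F}(I-E_f) = \sum_{U\subseteq F}(-1)^{|U|}E_{s_U}\]
in $\cD_P$, which follows from inclusion--exclusion together with Nica-covariance (with the convention $E_{s_U}=0$ when $s_U=\infty$). Combined with linearity and the defining formula for $\widetilde\phi$ on each $E_{s_U}\otimes a$, this yields the relation $\phi_F(a) = \widetilde\phi(p_F\otimes a)$, which underlies both implications.

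The forward direction is then immediate. If $\widetilde\phi$ extends to a unital completely positive (ucp) map on $\widetilde\cA$, the map $a\mapsto p_F\otimes a = (p_F\otimes 1)(1\otimes a)$ from $\cA$ to $\widetilde\cA$ is completely positive, being the composition of the unital $*$-embedding $a\mapsto 1\otimes a$ with multiplication by the positive central element $p_F\otimes 1$. Composing with $\widetilde\phi$ produces $\phi_F$, which is therefore completely positive.

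For the converse, the plan is to build $\widetilde\phi$ piece by piece on finite-dimensional commutative subalgebras. Fix a finite $F\subset P$ and set $\cD_F := \lspan\{E_{W,F}\}_{W\subseteq F}$. By Lemma~\ref{lm.EWF} the nonzero $E_{W,F}$ are mutually orthogonal projections summing to $I$, so $\cD_F\otimes\cA \cong \bigoplus_{E_{W,F}\neq 0}\cA$. The calculation in the proof of Proposition~\ref{prop.positive.DP} shows that if $E_{W,F}\neq 0$ and $r := s_W \in P$, then $E_{W,F} = \alpha_r(p_{W'})$, where $W' := \{r^{-1}s(f): f\in F\setminus W,\ rP\cap fP = s(f)P \neq\emptyset\}$. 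Since $\beta$ is automorphic, covariance forces the definition
\[\widetilde\phi_F(E_{W,F}\otimes a) := T(r)\phi_{W'}(\beta_r^{-1}(a))T(r)^*,\]
extended by linearity. Each summand is the composition of the $*$-automorphism $\beta_r^{-1}$, the assumed completely positive map $\phi_{W'}$, and the completely positive sandwich $X\mapsto T(r)XT(r)^*$, so $\widetilde\phi_F$ is ucp on $\cD_F\otimes\cA$.

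The main obstacle is verifying that these local definitions glue together: $\widetilde\phi_{F'}|_{\cD_F\otimes\cA} = \widetilde\phi_F$ whenever $F\subset F'$, and in particular $\widetilde\phi_F(E_p\otimes a) = T(p)\phi(\beta_p^{-1}(a))T(p)^*$ for every $p\in F$. Both statements reduce to an inclusion--exclusion bookkeeping that unwinds each $\phi_{W'}$ via its defining formula $\sum_{U\subseteq W'}(-1)^{|U|}T(s_U)\phi(\beta_{s_U}^{-1}(\cdot))T(s_U)^*$ and reassembles the resulting terms after refining the decomposition; the right LCM structure of $P$ and the identities $T(p)T(p^{-1}r)=T(r)$, $\beta_{p^{-1}r}^{-1}\beta_p^{-1}=\beta_r^{-1}$ ensure the cross terms cancel as required. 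Once compatibility is established, $\widetilde\phi := \bigcup_F \widetilde\phi_F$ is a well-defined ucp map on the norm-dense $*$-subalgebra $\bigcup_F\cD_F\otimes\cA\subset\widetilde\cA$; being ucp it is contractive, so it extends uniquely to a ucp map on all of $\widetilde\cA$. The covariance relation on generators $E_q\otimes a$ is immediate from $T(p)T(q) = T(pq)$ and $\beta_p\beta_q = \beta_{pq}$, and extends to $\widetilde\cA$ by linearity and continuity.
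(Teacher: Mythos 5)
Your forward direction is correct and matches the paper's: both rest on the identity $\phi_F(a)=\widetilde\phi\bigl(\prod_{f\in F}(I-E_f)\otimes a\bigr)$ and the fact that cutting by the projection $p_F\otimes 1$ is completely positive.

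In the converse, however, there is a genuine gap, and it sits exactly where you say "the main obstacle" is. You \emph{define} $\widetilde\phi_F$ blockwise by $\widetilde\phi_F(E_{W,F}\otimes a):=T(r)\phi_{W'}(\beta_r^{-1}(a))T(r)^*$ on each $\cD_F\otimes\cA$, which creates two proof obligations you never discharge: (a) compatibility $\widetilde\phi_{F'}|_{\cD_F\otimes\cA}=\widetilde\phi_F$ for $F\subset F'$, and (b) agreement of the glued map with the prescribed values $T(p)\phi(\beta_p^{-1}(a))T(p)^*$ on the generators $E_p\otimes a$ (without (b) you have produced \emph{some} ucp map, not an extension of $\widetilde\phi$). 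These are not routine: unwinding $\phi_{W'}$ requires matching subsets $U\subseteq F\setminus W$ with subsets of $W'=\{r^{-1}s(f)\}$, and that correspondence is neither injective (distinct $f$ can yield the same $r^{-1}s(f)$) nor total (the $f$ with $rP\cap fP=\emptyset$ drop out), so asserting that "the cross terms cancel" is precisely the computation that needs to be done. The paper avoids the gluing problem entirely by reversing the order of operations: since the projections $E_p$ are linearly independent, $\widetilde\phi$ is \emph{already} well defined by linearity on $\lspan\{E_p\}\otimes\cA$; one then verifies the covariance identity $\widetilde\phi(\widetilde\alpha_p(x))=T(p)\widetilde\phi(x)T(p)^*$ on that span and \emph{derives} $\widetilde\phi(E_{W,F}\otimes a)=T(r)\phi_{F_0}(\beta_r^{-1}(a))T(r)^*$ as a consequence, so positivity on each block is all that remains to check. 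Your block decomposition and the reduction to $T(r)\phi_{F_0}(\beta_r^{-1}(\cdot))T(r)^*$ are the right ideas; reorganizing the argument in the paper's order would close the gap without any new computation.
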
 

\begin{proof} 
For each finite subset $F \subset P$ and $a\in \cA$ we have
\begin{align*}
\phi_F(a) &=   \sum_{U\subseteq F} (-1)^{|U|} T(s_U) \phi(\beta_{s_U}^{-1} (a)) T(s_U)^* \\
&=  \sum_{U\subseteq F} (-1)^{|U|} \widetilde{\phi}(E_{s_U} \otimes a) \\
&=  \widetilde{\phi}(E_F \otimes a).
\end{align*}
where $E_F = \sum_{U\subseteq F} (-1)^{|U|} E_{s_U}=\prod_{p\in F} (I - E_p)$ is an orthogonal projection in $C(\Omega_P)$. If
$(\widetilde{\phi},T)$ is  a contractive covariant representation with $\widetilde{\phi}$ a unital completely positive map,  then $\phi_F$ must be completely positive. 

For the converse, we first show that for every $p,q\in P$ and $a\in \cA$, 
\begin{align*}
\widetilde{\phi}(\widetilde{\alpha_p}(E_q\otimes a)) &= \widetilde{\phi}(E_{pq}\otimes \beta_p(a)) \\
&= T(p)T(q) \phi(\beta_{pq}^{-1}(\beta_p(a)) T(q)^* T(p)^* \\
&= T(p) T(q) \phi(\beta_q^{-1}(a)) T(q)^* T(p)^* \\
&= T(p) \widetilde{\phi}(E_q\otimes a) T(p)^*.
\end{align*}
Therefore, for any $x\in\lspan\{E_q\otimes a: q\in P, a\in \cA\}$, 
\[\widetilde{\phi}(\widetilde{\alpha_p}(x)) = T(p) \widetilde{\phi}(x) T(p)^*.\]
Next, for each $W\subseteq F$ we  define 
\[\phi_{W,F}(a) = \widetilde{\phi}\Big(\Big(\prod_{e\in W} E_e  \prod_{f\in F\backslash W} (I-E_f) \Big)\otimes a\Big).\]
We claim that $\phi_{W,F}$ is completely positive.
If $\bigcap_{e\in W} eP=\emptyset$, we have $\prod_{e\in W} E_e=0$ so the claim is trivially satisfied. Otherwise, $\bigcap_{e\in W} eP=rP$ for some $r\in P$, and $\prod_{e\in W} E_e=E_r$. Define $F_0=\{r^{-1}s: sP=rP\cap fP, f\in F\backslash W\}$. Then, for each $f\in F\backslash W$, either $fP\cap rP=\emptyset$ and $E_r (I-E_f)=E_r$, or $fP\cap rP=sP$ so that $E_r(I-E_f) = E_r - E_s = \alpha_r (I-E_{r^{-1}s})$. Therefore, 
\begin{align*}
\phi_{W,F}(a) &= \widetilde{\phi}(\alpha_r(\prod_{p\in F_0} (I-E_p))\otimes a) \\
&= \widetilde{\phi}(\widetilde{\alpha_r}( E_{F_0} \otimes \beta_r^{-1}(a) )) \\
&= T(r) \phi_{F_0}(\beta_r^{-1}(a)) T(r)^*.
\end{align*}  
Since $\phi_{F_0}$ is completely positive, composing with a $\ast$-automorphism $\beta_r^{-1}$ and conjugating with a contraction $T(r)$ yields another completely positive map $\phi_{W,F}$.  This completes the proof of the claim.

Let $C_0(\Omega_P)=\lspan\{E_p\}$, which is dense in $C(\Omega_P)$. We first extend  $\widetilde{\phi}$ to $\widetilde{\cA_0}=C_0(\Omega_P)\otimes \cA$. This extension is clearly unital because
$\widetilde{\phi}(I\otimes 1)=T(e) \phi(1) T(e)^* = I$. To prove  that $\widetilde{\phi}$ is completely positive, pick any $x_1,\cdots, x_n\in \widetilde{\cA_0}$. Since $C_0(\Omega_P)=\lspan\{E_p\}$, one can find a finite subset $F\subset P$ and elements $\{a_{f,i}\}_{f\in F, 1\leq i\leq n} \subset \cA$, such that \[x_i = \sum_{f\in F} E_f \otimes a_{f,i}.\]
For each $W\subseteq F$, let $E_{W,F}=\prod_{e\in W} E_e \prod_{f\in F\backslash W} (I-E_f)$. From Lemma~\ref{lm.EWF}, $\{E_{W,F}\}_{W\subseteq F}$ are orthogonal projections and they are pairwise orthogonal. Moreover, for each $f\in F$, the projection $E_f$ decomposes as 
\[E_f = \sum_{f\in W} E_{W,F}.\]
Therefore, rearranging and combining terms, we can rewrite \[x_i=\sum_{W\subseteq F} E_{W,F}\otimes a_{W,F,i}\]
for some $a_{W,F,i}\in \cA$, and thus
\[x_i^* x_j = \sum_{W\subseteq F} E_{W,F}\otimes a_{W,F,i}^* a_{W,F,j}.\]
It follows that the matrix
\begin{align*}
[\widetilde{\phi}(x_i^*x_j)] &= \sum_{W\subseteq F} [\widetilde{\phi} (E_{W,F}\otimes a_{W,F,i}^* a_{W,F,j})] \\
 &=  \sum_{W\subseteq F} [\phi_{W,F}(a_{W,F,i}^* a_{W,F,j})] 
\end{align*}
 is positive because $\phi_{W,F}$ is completely positive for each $W$. This shows that $\widetilde{\phi}$ is unital completely positive on $\widetilde{\cA_0}$. Since completely positive maps are also completely contractive, $\widetilde{\phi}$ can be extended by continuity to a unital completely positive map on $\widetilde{\cA}$. 
\end{proof} 

When we apply Theorem \ref{thm.dym.dilation} to $(\widetilde{\phi},T)$ we obtain the following strengthening of Theorem~\ref{thm.LCM.Dilation}.

\begin{theorem}\label{thm.tensor.dilation} Suppose $\beta$ is a $\ast$-automorphic $P$-action on  the  unital $\mathrm{C}^*$-algebra $\cA$. Let  $\phi:\cA\to\bh{H}$ be a unital completely positive map on  $\cA$ and let $T:P\to\bh{H}$ be a contractive representation of the right LCM semigroup $P$. The following are equivalent:
\begin{enumerate}
\item There exist a $\ast$-homomorphism $\pi:\cA\to\bh{K}$ and an isometric representation $V:P\to\bh{K}$ on a Hilbert space $\cK\supset \cH$ such that for $a,b\in \cA$ and  $p,q\in P$, if  $pP\cap qP=rP$, then
\begin{equation}\label{eqn:multipl-dilation}
V(p)\pi(\beta_p^{-1}(a))V(p)^* V(q)\pi(\beta_q^{-1}(b))V(q)^* =
V(r) \pi(\beta_r^{-1}(ab)) V(r)^*;
\end{equation}
and if $pP\cap qP=\emptyset$, then $V(p)$ and $V(q)$ have orthogonal ranges.

\noindent Moreover, $\pi$ and $V$ are dilations for $\phi$ and $T$, respectively, and $\cH$ is co-invariant for $V$. 
\item For each finite $F\subset P$, the map $\phi_F:\cA\to\bh{H}$ defined by \[
\phi_F(a) = \sum_{U\subseteq F} (-1)^{|U|} T(s_U) \phi(\beta_{s_U}^{-1} (a)) T(s_U)^*
\]
 is completely positive.
\end{enumerate}
\end{theorem}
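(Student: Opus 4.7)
The strategy is to apply Theorem \ref{thm.dym.dilation} to the right LCM dynamical system $(\widetilde{\cA}, P, \widetilde{\alpha})$ together with the candidate pair $(\widetilde{\phi}, T)$, using Proposition \ref{prop.tensor.UCP} to translate complete positivity of $\widetilde{\phi}$ into complete positivity of all the maps $\phi_F$.

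For the implication $(2) \Rightarrow (1)$: Proposition \ref{prop.tensor.UCP} gives that $(\widetilde{\phi}, T)$ extends to a contractive covariant representation of $(\widetilde{\cA}, P, \widetilde{\alpha})$ with $\widetilde{\phi}$ unital completely positive. Theorem \ref{thm.dym.dilation} then yields an isometric covariant dilation $(\widetilde{\pi}, V)$ on some $\cK \supseteq \cH$ with $\cH$ co-invariant for $V$. Setting $\pi(a) := \widetilde{\pi}(1 \otimes a)$ produces a $\ast$-homomorphism $\cA \to \bh{K}$ that compresses to $\phi$ on $\cH$. Applying the covariance identity $V(p)\widetilde{\pi}(x)V(p)^* = \widetilde{\pi}(\widetilde{\alpha}_p(x))$ to $x = 1 \otimes \beta_p^{-1}(a)$ gives
\[V(p)\pi(\beta_p^{-1}(a))V(p)^* = \widetilde{\pi}(E_p \otimes a),\]
so the product of two such terms is $\widetilde{\pi}(E_pE_q \otimes ab)$, which equals $\widetilde{\pi}(E_r \otimes ab) = V(r)\pi(\beta_r^{-1}(ab))V(r)^*$ when $pP \cap qP = rP$ and equals $0$ when $pP \cap qP = \emptyset$; taking $a = b = 1$ in the latter case produces the orthogonal ranges condition.

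For the converse $(1) \Rightarrow (2)$: I would show $\phi_F$ is completely positive by exhibiting it as a compression of a completely positive map built from $\pi$ alone, bypassing the need to reconstruct $\widetilde{\pi}$ explicitly. Setting $q = e$, $a = 1$ in \eqref{eqn:multipl-dilation} yields $V(p)V(p)^* \pi(b) = V(p)\pi(\beta_p^{-1}(b))V(p)^*$, and swapping the roles of $p,q$ and setting $b = 1$ yields $\pi(a) V(p)V(p)^* = V(p)\pi(\beta_p^{-1}(a))V(p)^*$. Together these show $V(p)V(p)^*$ commutes with $\pi(\cA)$ and satisfies
\[V(p)\pi(\beta_p^{-1}(a))V(p)^* = \pi(a)\, V(p)V(p)^*.\]
Choosing $a = b = 1$ in \eqref{eqn:multipl-dilation} gives $V(p)V(p)^* \cdot V(q)V(q)^* = V(r)V(r)^*$ or $0$, and an easy induction yields $\prod_{p \in U} V(p)V(p)^* = V(s_U)V(s_U)^*$ (with the convention $V(\infty)V(\infty)^* := 0$ when $U$ has no upper bound). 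Inclusion-exclusion then shows that $Q_F := \prod_{p \in F}(I - V(p)V(p)^*) = \sum_U (-1)^{|U|} V(s_U)V(s_U)^*$ is a projection commuting with $\pi(\cA)$, so summing the previous displayed identity over $U \subseteq F$ gives
\[\sum_{U \subseteq F}(-1)^{|U|} V(s_U)\pi(\beta_{s_U}^{-1}(a))V(s_U)^* = \pi(a)\, Q_F = Q_F\, \pi(a)\, Q_F.\]
Co-invariance of $\cH$ under $V$ then yields $T(p)\phi(\beta_p^{-1}(a))T(p)^* = P_\cH V(p)\pi(\beta_p^{-1}(a))V(p)^*\big|_\cH$, and hence $\phi_F(a) = P_\cH Q_F \pi(a) Q_F\big|_\cH$, which is completely positive as a compression of the $\ast$-homomorphism $\pi$.

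The main delicate point is extracting the key identity $V(p)\pi(\beta_p^{-1}(a))V(p)^* = \pi(a)\, V(p)V(p)^*$ from the purely multiplicative relation \eqref{eqn:multipl-dilation}. Once this is in hand, the $(1) \Rightarrow (2)$ direction collapses $\phi_F$ into a compression of a $\ast$-homomorphism, while in the $(2) \Rightarrow (1)$ direction the work is essentially packaged into Proposition \ref{prop.tensor.UCP} combined with Theorem \ref{thm.dym.dilation}.
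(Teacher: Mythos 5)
Your proposal is correct. The direction $(2)\Rightarrow(1)$ coincides with the paper's argument: both pass through Proposition \ref{prop.tensor.UCP} to get the unital completely positive $\widetilde{\phi}$ on $\widetilde{\cA}=C(\Omega_P)\otimes\cA$, invoke Theorem \ref{thm.dym.dilation}, set $\pi(a)=\widetilde{\pi}(1\otimes a)$, and read off \eqref{eqn:multipl-dilation} from the identity $\widetilde{\pi}(E_p\otimes a)=V(p)\pi(\beta_p^{-1}(a))V(p)^*$ together with multiplicativity of $\widetilde{\pi}$. Where you genuinely diverge is in $(1)\Rightarrow(2)$: the paper reassembles the tensor-product picture, using Proposition \ref{prop.tensor.UCP} to extend $(\pi,V)$ to a map $\widetilde{\pi}$ on $\widetilde{\cA}$, arguing via \eqref{eqn:multipl-dilation} that $\widetilde{\pi}$ is multiplicative, and then identifying $\phi_F$ as the compression of the (non-unital) $\ast$-homomorphism $a\mapsto\widetilde{\pi}\bigl(\prod_{f\in F}(I-E_f)\otimes a\bigr)$. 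You instead stay entirely inside $\bh{K}$: specializing \eqref{eqn:multipl-dilation} at $p=e$ and $q=e$ gives the commutation identity $V(p)\pi(\beta_p^{-1}(a))V(p)^*=\pi(a)V(p)V(p)^*=V(p)V(p)^*\pi(a)$, from which $\sum_{U\subseteq F}(-1)^{|U|}V(s_U)\pi(\beta_{s_U}^{-1}(a))V(s_U)^*=Q_F\pi(a)Q_F$ with $Q_F=\prod_{p\in F}(I-V(p)V(p)^*)$ a projection commuting with $\pi(\cA)$; the block-matrix computation with the zero entries coming from co-invariance of $\cH$ then shows $\phi_F(a)=P_\cH Q_F\pi(a)Q_F\big|_\cH$, which is manifestly completely positive. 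Your route buys a more self-contained and explicit converse that avoids re-verifying the hypotheses of Proposition \ref{prop.tensor.UCP} for the pair $(\pi,V)$ (a step the paper passes over rather quickly), at the cost of the small inductive verification that $\prod_{p\in U}V(p)V(p)^*=V(s_U)V(s_U)^*$; the paper's route keeps the two directions symmetric by funneling everything through the algebra $\widetilde{\cA}$.
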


\begin{proof} Assume first that (2) holds and let $(\widetilde{\phi},T)$ be the contractive covariant representation of the system $(\widetilde{\cA}, P, \widetilde{\alpha})$ from Proposition \ref{prop.tensor.UCP}. The map $\widetilde{\phi}$ is unital and completely positive, therefore, by Theorem \ref{thm.dym.dilation}, $(\widetilde{\phi},T)$ admits an isometric covariant dilation $(\widetilde{\pi},V)$ on $\bh{K}$ for $\cK\supset\cH$ in which $\cH$ is co-invariant for $V$. Define $\pi:\cA\to\bh{K}$ by $\pi(a)=\widetilde{\pi}(I\otimes a)$. One can easily verify that $\pi$ is a $\ast$-homomorphism that is a dilation for $\phi$ and that
{$\widetilde{\pi}(E_p\otimes a)=V(p)\pi(\beta_p^{-1}(a)) V(p)^*$.} Notice that
\[ E_p\otimes a \cdot E_q\otimes b = \begin{cases}
E_r\otimes ab, &\mbox{ if } pP\cap qP=rP; \\
0, &\mbox{ if } pP\cap qP=\emptyset. 
\end{cases}
\]
If $pP\cap qP=rP$, then
\begin{align*}
 V(p)\pi(\beta_p^{-1}(a) )V(p)^* V(q)\pi(\beta_q^{-1}(b))V(q)^* &=\widetilde{\pi}(E_p\otimes a) \widetilde{\pi}(E_q\otimes b)
 \\
&= 
\widetilde{\pi}(E_r\otimes ab) \\
&=
V(r) \pi(\beta_r^{-1}(ab)) V(r)^* ,
\end{align*}
and if $pP\cap qP=\emptyset$, then 
\[
\widetilde{\pi}(E_p\otimes 1) \widetilde{\pi}(E_q\otimes 1)=V(p)V(p)^* V(q)V(q)^*=0
\] so that $V(p)$ and $V(q)$ have orthogonal ranges. 

Conversely, assume now (1) holds. Then $\pi$ is a $\ast$-homomorphism and thus unital completely positive. Therefore, one can extend $\pi$ and $V$ to obtain a contractive covariant representation $(\widetilde{\pi},V)$ of $(\widetilde{\cA},P,\widetilde{\alpha})$. From the construction, $\widetilde{\pi}(E_p\otimes a)=V(p) \pi(\beta_p^{-1}(a)) V(p)^*$, and thus by equation \eqref{eqn:multipl-dilation}, $\widetilde{\pi}$ is multiplicative. Therefore, $\widetilde{\pi}$ is a $\ast$-homomorphism of $\widetilde{\cA}$. For each finite $F\subset P$, we have
\[\widetilde{\pi}_F(a)=\widetilde{\pi}((\prod_{f\in F} (I-E_f)) \otimes a)=
\sum_{U\subseteq F} (-1)^{|U|} V(s_U) \pi(a) V(s_U)^*.
\]
which shows that $\widetilde{\pi}_F$ is a (non-unital) $\ast$-homomorphism of $\cA$. Since $\pi$ and $ V$ are dilations of $\phi$ and $T$ and $\cH$ is co-invariant for $V$, projecting to the corner of $\cH$  obtains $\phi_F(a)$, which is thus completely positive. 
\end{proof}

\begin{remark} In the special case when $\cA=\mathbb{C}$, a unital completely positive map $\phi$ on $\cA$ is uniquely determined by $\phi(x)=xI$. In this case, condition (2) is  reduced to that in  Theorem \ref{thm.LCM.Dilation}. 
\end{remark} 

 One can use a similar technique to obtain a strengthened version of Corollary \ref{cor.QDilation}.

\begin{corollary}\label{cor.tensor.QP} Suppose that in addition to the conditions in Theorem~\ref{thm.tensor.dilation}, we also have that, 
\begin{equation}\label{cor.tensor.QP.eq1}
\sum_{U\subseteq F} (-1)^{|U|} T(s_U)\phi(\beta_{s_U}^{-1}(a))T(s_U)^* =0
\end{equation}
 for every (finite) foundation set $F\subset P$. Let $V$ be the
resulting dilation from Theorem \ref{thm.tensor.dilation}. Then 
\begin{equation}\label{cor.tensor.QP.eq2}
\sum_{U\subseteq F} (-1)^{|U|} V(s_U)\pi(\beta_{s_U}^{-1}(a))V(s_U)^* =0
\end{equation}
for every (finite) foundation set $F\subset P$. 
\end{corollary}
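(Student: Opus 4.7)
The plan is to reduce the problem to an identity in the larger system $(\widetilde{\cA}, P, \widetilde{\alpha})$ with $\widetilde{\cA} = C(\Omega_P) \otimes \cA$ used in the proof of Theorem~\ref{thm.tensor.dilation}. Recall that $\pi$ and $V$ extend to a $\ast$-homomorphism $\widetilde{\pi}$ of $\widetilde{\cA}$ satisfying $\widetilde{\pi}(E_p \otimes a) = V(p)\pi(\beta_p^{-1}(a))V(p)^*$, and that $(\widetilde{\pi},V)$ is an isometric covariant dilation of the contractive covariant representation $(\widetilde{\phi},T)$ with $\widetilde{\phi}(E_p \otimes a) = T(p)\phi(\beta_p^{-1}(a))T(p)^*$. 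Writing $E_F := \prod_{f\in F}(I-E_f) = \sum_{U\subseteq F}(-1)^{|U|} E_{s_U}$ in $C(\Omega_P)$, the hypothesis~\eqref{cor.tensor.QP.eq1} is equivalent to $\widetilde{\phi}(E_F \otimes a) = 0$, and the desired conclusion~\eqref{cor.tensor.QP.eq2} is equivalent to $\widetilde{\pi}(E_F \otimes a) = 0$ for every foundation set $F$ and every $a\in\cA$. Since $\widetilde{\pi}$ is multiplicative and $E_F \otimes a = (E_F \otimes 1)(I \otimes a)$, it suffices to show that the projection $Q := \widetilde{\pi}(E_F \otimes 1)$ vanishes whenever $F$ is a foundation set.

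First I would observe that $Q|_\cH = 0$. Indeed, $Q$ is a projection whose compression to $\cH$ is $\widetilde{\phi}(E_F \otimes 1) = \phi_F(1) = 0$ by hypothesis, so for $h\in \cH$ one has $\|Q^{1/2}h\|^2 = \langle Qh, h\rangle = 0$, yielding $Qh = 0$. To push this to $QV(p)h = 0$, I would apply the identity $V(p)^*\widetilde{\pi}(y)V(p) = \widetilde{\pi}(\widetilde{\alpha}_{p^{-1}}(y))$ from the remark following Definition~\ref{df.CovariantPair}, combined with the LCM computation $E_p(I-E_f) = \alpha_p(I-E_{p_f})$ when $pP\cap fP = pp_fP$ and $E_p(I-E_f) = E_p$ when $pP\cap fP=\emptyset$. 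A straightforward induction on $|F|$ then yields $E_p E_F = \alpha_p(E_{F''})$ and consequently
\[
V(p)^* Q V(p) = \widetilde{\pi}(E_{F''}\otimes 1), \qquad F'' := \{p_f : f \in F,\ pP\cap fP \neq \emptyset\}.
\]

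The crux of the argument is the combinatorial claim that $F''$ is itself a foundation set whenever $F$ is. Given $q \in P$, the foundation property of $F$ applied to $pq$ produces $f\in F$ with $fP\cap pqP\neq\emptyset$; since $pqP\subseteq pP$ this forces $pP\cap fP\neq\emptyset$, so $p_f\in F''$, and a left-cancellation of $p$ from any common element of $pqP\cap pp_fP$ yields an element of $qP\cap p_fP$. Granted this, the compression argument of the previous paragraph, applied to $F''$, gives $\langle V(p)^*QV(p)h, h\rangle = \langle \widetilde{\pi}(E_{F''}\otimes 1)h, h\rangle = 0$, so $\|QV(p)h\|^2 = 0$ for every $p\in P$ and $h\in\cH$.

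Finally, since $E_F\otimes 1\in C(\Omega_P)\otimes 1\subseteq Z(\widetilde{\cA})$, the projection $Q$ commutes with $\widetilde{\pi}(y)$ for every $y\in\widetilde{\cA}$. Combined with the minimality $\cK = \overline{\lspan}\{\widetilde{\pi}(y) V(p)h : y\in\widetilde{\cA}, p\in P, h\in \cH\}$, this gives $Q\widetilde{\pi}(y)V(p)h = \widetilde{\pi}(y)\,QV(p)h = 0$, so $Q = 0$ on all of $\cK$, as required. The main obstacle is the combinatorial lemma that $F''$ remains a foundation set; the rest is a clean bookkeeping exercise within the framework constructed for Theorem~\ref{thm.tensor.dilation}.
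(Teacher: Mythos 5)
Your proof is correct, but it takes a genuinely different route from the paper's. The paper observes that the kernel of the quotient map $C(\Omega_P)\otimes \cA \to C(\partial\Omega_P)\otimes\cA$ is the ideal generated by the projections $E_F\otimes a$ associated to foundation sets, so that hypothesis \pref{cor.tensor.QP.eq1} lets $\widetilde{\phi}$ factor through a completely positive map $\dot{\widetilde{\phi}}$ on the boundary system $C(\partial\Omega_P)\otimes\cA$; it then dilates $(\dot{\widetilde{\phi}},T)$ there, and the conclusion holds automatically because $E_F\otimes a$ is already zero in the quotient algebra. You instead stay inside the full system, take the minimal dilation $(\widetilde{\pi},V)$ of $(\widetilde{\phi},T)$, and show directly that the central projection $Q=\widetilde{\pi}(E_F\otimes 1)$ vanishes, the key inputs being the identity $V(p)^*QV(p)=\widetilde{\pi}(E_{F''}\otimes 1)$ and the combinatorial lemma that $F''=\{p_f : pP\cap fP = pp_fP\neq\emptyset\}$ is again a foundation set (which is essentially the invariance of the boundary under the semigroup action, and is correct as you argue it via left cancellation). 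Your route buys a slightly sharper statement -- the minimal dilation of the original system itself satisfies the boundary relations, rather than some dilation constructed ad hoc through the quotient -- at the cost of needing minimality and centrality explicitly in the last step; the paper's route is shorter and more structural, and the two dilations in fact coincide by uniqueness of minimal dilations. One small point you should make explicit: the conclusion as you prove it requires the dilation to be the minimal one (otherwise $Q$ could act nontrivially off the minimal subspace), which is consistent with Theorem \ref{thm.dym.dilation} but worth stating.
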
 

\begin{proof} Notice first that the kernel of  the quotient $C(\Omega_P)\otimes \cA \to C(\partial \Omega_P)\otimes \cA$ is the ideal generated by $E\otimes a$ where the
projection $E$ is associated to a foundation set. Hence, condition \pref{cor.tensor.QP.eq1} ensures that the map $\tilde \phi$ from the pair  $(\tilde{\phi},T)$ obtained in Proposition \ref{prop.tensor.UCP} factors through this quotient.  This gives a pair $(\dot{\tilde{\phi}}, T)$ for the  LCM system on  the quotient $\dot{\tilde \cA}: =C(\partial \Omega_P)\otimes \cA$. 

The equivalent conditions in Theorem~\ref{thm.tensor.dilation} ensure that $\tilde{\phi}$ is completely positive and hence so is $\dot{\tilde{\phi}}$. Therefore, we can dilate $(\dot{\tilde{\phi}},T)$ to an isometric covariant representation $(\rho,V)$ of the $\dot{\tilde \cA} $ system. Letting $\pi(a) := \rho(I\otimes a)$ for $a \in \cA$ gives a pair $(\pi,V)$ satisfying the required equation
\eqref{cor.tensor.QP.eq2}. 
\end{proof} 

\subsection{Examples from $\mathbb{F}_k^+$} 

\begin{example}\label{ex.dilation.Fk} Let us consider the case when $P=\mathbb{F}_2^+$. Suppose $\cA$ is any unital $\mathrm{C}^*$-algebra with a unital completely positive map $\phi:\cA\to\bh{H}$ and two $\ast$-automorphisms $\beta_1,\beta_2$. Suppose that $T_1,T_2\in\bh{H}$ are two contractions such that for all $a\in A$, 
\[\phi(a) = T_1 \phi(\beta_1^{-1}(a)) T_1^* + T_2 \phi(\beta_2^{-1}(a)) T_2^* \]
We would like to claim that $\tcb{(\phi, T)}$ can be dilated to a $\ast$-homomorphism $\pi:\cA\to\bh{K}$ and isometries $V_1,V_2\in\bh{K}$ such that 
\[\pi(a) = V_1 \pi(\beta_1^{-1}(a)) V_1^* + V_2 \pi(\beta_2^{-1}(a)) V_2^*\]

We first build $C(\partial \Omega_P)\otimes \cA$ as an inductive limit of the following system: let $\cA_1=\cA$, and $\cA_{n+1} = \cA_n \oplus \cA_n$ with connecting map $\varphi_n:\cA_n \to \cA_{n+1}$ by $\varphi_n(a)=a\oplus a$. One can verify that the inductive limit $\widetilde{\cA}\cong C(X)\otimes \cA$, where $X$ is the Cantor set. 

Define two $\ast$-endomorphisms $\alpha_1, \alpha_2$ on $\widetilde{\cA}$ by $\alpha_1(a)=\beta_1(a)\oplus 0$ and $\alpha_2(a)=0\oplus \beta_2(a)$ for $a\in \cA_n$. This extends to an $\mathbb{F}_2^+$-action on $\widetilde{\cA}$, by sending generator $e_i$ to $\alpha_i$, $i=1,2$. By Proposition \ref{prop.LCM.N}, the resulting dynamical system is a right LCM dynamical system since ranges of $\alpha_i$ are both ideals in $\widetilde{\cA}$ and their ranges are orthogonal to one another. 

\begin{figure}[h]
    \centering

    \begin{tikzpicture}

	\draw[->] (1.25,0) -- (3.25,0.75);
	\draw[dashed,->] (1.25,0) -- (3.25,-.75);
	
	\draw[->] (3.75,.75) -- (5.75,1.5);
	\draw[dashed,->] (3.75,.75) -- (5.75,-0.5);
	\draw[->] (3.75,-.75) -- (5.75,0.5);
	\draw[dashed, ->] (3.75,-.75) -- (5.75,-1.5);

    \node at (1,0) {$\mathcal{A}$};
    
    \node at (3.5,.75) {$\mathcal{A}$};
    \node at (3.5,-.75){$\mathcal{A}$};
    
    \node at (6,1.5){$\mathcal{A}$};
    \node at (6,0.5){$\mathcal{A}$};
    \node at (6,-0.5){$\mathcal{A}$};
    \node at (6,-1.5){$\mathcal{A}$};
    
    \node at (2, 0.6) {$\alpha_1$};
     \node at (2, -0.6) {$\alpha_2$};
     
     \node at (4.5, 0.55) {$\alpha_2$};
     \node at (4.5, -0.55) {$\alpha_1$};
     \node at (4.5, 1.3) {$\alpha_1$};
     \node at (4.5, -1.3) {$\alpha_2$};
    \end{tikzpicture}
    \label{fig:Fk}
\end{figure}

Let $\phi_1=\phi:\cA_1\to\bh{H}$, and recursively define 
$\phi_{n+1}:\cA_{n+1}=\cA_n\oplus \cA_n\to\bh{H}$ by 
\[\phi_{n+1}(a\oplus b) = T_1 \phi_n(\beta_1^{-1}(a)) T_1^* + T_2 \phi_n(\beta_2^{-1}(b)) T_2^*.\]
Since $\phi_1$ is unital completely positive, the map $a\mapsto T_i \phi(\beta_i^{-1}(a))T_i^*$ is completely positive, $\phi_2$ is also a unital completely positive map. Inductively, each $\phi_n$ is a unital completely positive map. Notice that 
\begin{align*}
\phi_2(\varphi_1(a)) &= \phi_2(a\oplus a) \\
&= T_1 \phi(\beta^{-1}(a)) T_1^* + T_2 \phi(\beta^{-1}(a)) T_2^* \\
&= \phi_1(a),
\end{align*}
and inductively, $\phi_{n+1}(\varphi_n(a))=\phi_n(a)$.

\begin{figure}[h]
    \centering

    \begin{tikzpicture}

	\draw[->] (1.25,-3) -- (3.25,-3);
	\draw[->] (3.75,-3) -- (5.75,-3);
	\draw[->] (6.25,-3) -- (8.25,-3);
	
	\draw[->] (1,-3.2) -- (1, -4.3);
	\draw[->] (3.5,-3.2) -- (3.5, -4.3);
	\draw[->] (6,-3.2) -- (6, -4.3);
	
	\draw[->] (1.5,-4.5) -- (3,-4.5);
	\draw[->] (4,-4.5) -- (5.5,-4.5);
	\draw[->] (6.5,-4.5) -- (8.25,-4.5);


     \node at (1, -3) {$\mathcal{A}_1$};
     \node at (3.5, -3) {$\mathcal{A}_2$};
     \node at (6, -3) {$\mathcal{A}_3$};
     \node at (8.8, -3) {$\cdots$};
     
     \node at (1, -4.5) {$\mathcal{B}(\mathcal{H})$};
     \node at (3.5, -4.5) {$\mathcal{B}(\mathcal{H})$};
     \node at (6, -4.5) {$\mathcal{B}(\mathcal{H})$};
     \node at (8.8, -4.5) {$\cdots$};
     
     \node at (2.25, -2.7) {$\varphi_1$};
     \node at (4.75, -2.7) {$\varphi_2$};
     
     \node at (1.7, -3.75) {$\phi_1=\phi$};
     \node at (3.75, -3.75) {$\phi_2$};
     \node at (6.25, -3.75) {$\phi_3$};
    \end{tikzpicture}
    \label{fig:Fk1}
\end{figure}

The maps $\phi_n$ can be extended to a unital completely positive map $\widetilde{\phi}$ on the inductive limit $\widetilde{\cA}$. Moreover, for each $a\in \cA_n$, 
\[\phi_{n+1}(\alpha_1(a))=\phi_{n+1}(\beta_1(a)\oplus 0)=T_1 \phi_n(a) T_1^*,\]
and,
\[\phi_{n+1}(\alpha_2(a))=\phi_{n+1}(0\oplus \beta_2(a))=T_2 \phi_n(a) T_2^*.\]
We have for all $a\in \widetilde{\cA}$, 
\[\widetilde{\phi}(\alpha_i(a)) = T_i \widetilde{\phi}(a) T_i^*.\]
We can view $(T_1,T_2)$ as a contractive representation $T$ of $\mathbb{F}_2^+$ by sending generator $e_i$ to $T_i$. The resulting pair $(\widetilde{\phi},T)$ is a contractive covariant representation of $(\widetilde{\cA},\mathbb{F}_2^+, \alpha)$. Therefore, by Theorem \ref{thm.dym.dilation}, it dilates to an isometric covariant representation $(\widetilde{\pi}, V)$ on some Hilbert space $\cK\supset\cH$. Define $\pi:\cA\to\bh{K}$ by $\pi(a)=\widetilde{\pi}(a)$ for all $a\in \cA=\cA_1\subset\widetilde{\cA}$. We have
\begin{align*}
\pi(a) &= \widetilde{\pi}(a) \\
&= \widetilde{\pi}(a\oplus a) \\
&= \widetilde{\pi}(\alpha_1(\beta_1^{-1}(a))) + \widetilde{\pi}(\alpha_2(\beta_2^{-1}(a))) \\
&= V_1 \widetilde{\pi}(\beta_1^{-1}(a)) V_1^* + V_2 \widetilde{\pi}(\beta_2^{-1}(a)) V_2^* \\
&= V_1 \pi(\beta_1^{-1}(a)) V_1^* + V_2 \pi(\beta_1^{-1}(a)) V_2^*. 
\end{align*}
Therefore, the pair $(\phi,T)$ gets dilated to the pair $(\pi,V)$, and the relation \[\phi(a)=T_1\phi(\beta_1^{-1}(a))T_1^*+T_2\phi(\beta_1^{-1}(a))T_2^*,\] is preserved as
\[\pi(a)=V_1\pi(\beta_1^{-1}(a))V_1^* + V_2\pi(\beta_1^{-1}(a))V_2^*.\] One should also notice that by setting $a=I$, one gets $V_1V_1^*+V_2V_2^*=I$. Therefore $V_1$ and $V_2$ have orthogonal ranges.
\end{example}

One can easily generalize Example \ref{ex.dilation.Fk} to $\mathbb{F}_k^+$ and obtain the following corollary.

\begin{corollary}\label{cor.dilation.Fk} Let $\phi:\cA\to\bh{H}$ be a unital completely positive map on a unital $\mathrm{C}^*$-algebra $\cA$. Let $\beta_1,\cdots,\beta_n$ be $\ast$-automorphisms of $\cA$. Suppose that $T_1,\cdots,T_k\in\bh{H}$ are contractions such that for any $a\in \cA$, 
\[\phi(a) = \sum_{i=1}^k T_i \phi(\beta_i^{-1}(a)) T_i^*.\]
Then there exists a $\ast$-homomorphism $\pi:\cA\to\bh{K}$ on a Hilbert space $\cK\supset\cH$ and isometries $V_1,\cdots,V_k\in\bh{K}$ with orthogonal ranges, such that 
\[\pi(a) = \sum_{i=1}^k V_i \pi(\beta_i^{-1}(a)) V_i^*\]
and $\pi, V$ are dilations of $\phi, T$, and $\cH$ is co-invariant for $V$. 
\end{corollary}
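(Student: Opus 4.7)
The plan is to generalize Example \ref{ex.dilation.Fk} from $k=2$ to general $k$ by constructing a larger ambient $\mathrm{C}^*$-algebra on which a right LCM dynamical system over $\mathbb{F}_k^+$ naturally lives, lifting $\phi$ appropriately, and then applying Theorem \ref{thm.dym.dilation}.

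First, I would build $\widetilde{\cA}$ as an inductive limit. Set $\cA_1:=\cA$ and $\cA_{n+1}:=\cA_n^{\oplus k}$, with connecting maps $\varphi_n:\cA_n\to\cA_{n+1}$ given by $\varphi_n(a)=(a,a,\ldots,a)$; let $\widetilde{\cA}$ denote the resulting unital inductive limit. On $\widetilde{\cA}$, define $\ast$-endomorphisms $\alpha_1,\ldots,\alpha_k$ by placing $\beta_i(a)$ in the $i$-th slot and zero in the others, so that the $\alpha_i(1)$ are pairwise orthogonal projections and each $\alpha_i(\widetilde{\cA})$ is an ideal of $\widetilde{\cA}$. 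Sending the generator $e_i\in\mathbb{F}_k^+$ to $\alpha_i$ gives an $\mathbb{F}_k^+$-action, and Proposition \ref{prop.LCM.N} then certifies that $(\widetilde{\cA},\mathbb{F}_k^+,\alpha)$ is a right LCM dynamical system.

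Second, I would lift $\phi$ to a unital completely positive map $\widetilde{\phi}$ on $\widetilde{\cA}$. Recursively define $\phi_1:=\phi$ and
\[\phi_{n+1}(a_1,\ldots,a_k):=\sum_{i=1}^k T_i\,\phi_n(\beta_i^{-1}(a_i))\,T_i^*;\]
each $\phi_n$ is unital completely positive because each summand is, and the hypothesis $\phi(a)=\sum_i T_i\phi(\beta_i^{-1}(a))T_i^*$ is exactly what is needed to ensure the compatibility $\phi_{n+1}\circ\varphi_n=\phi_n$. Consequently the sequence assembles into a unital completely positive $\widetilde{\phi}:\widetilde{\cA}\to\bh{H}$ satisfying $\widetilde{\phi}(\alpha_i(a))=T_i\widetilde{\phi}(a)T_i^*$ for all $a\in\widetilde{\cA}$. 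Sending $e_i\mapsto T_i$ defines a contractive representation $T$ of $\mathbb{F}_k^+$, and the pair $(\widetilde{\phi},T)$ is then a contractive covariant representation of $(\widetilde{\cA},\mathbb{F}_k^+,\alpha)$.

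Finally, I would invoke Theorem \ref{thm.dym.dilation} to obtain an isometric covariant dilation $(\widetilde{\pi},V)$ of $(\widetilde{\phi},T)$ on some Hilbert space $\cK\supset\cH$ with $\cH$ co-invariant for $V$. Setting $\pi(a):=\widetilde{\pi}(a)$ for $a\in\cA=\cA_1\subset\widetilde{\cA}$ produces a $\ast$-homomorphism dilating $\phi$. Because $\varphi_1(a)=(a,\ldots,a)=\sum_{i=1}^k \alpha_i(\beta_i^{-1}(a))$ inside $\cA_2\subset\widetilde{\cA}$, applying $\widetilde{\pi}$ together with the covariance relation $\widetilde{\pi}(\alpha_i(x))=V_i\widetilde{\pi}(x)V_i^*$ gives
\[\pi(a)=\sum_{i=1}^k V_i\,\pi(\beta_i^{-1}(a))\,V_i^*,\]
and specializing to $a=1$ yields $\sum_i V_iV_i^*=I$, forcing the ranges of $V_1,\ldots,V_k$ to be pairwise orthogonal. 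The main obstacle is essentially bookkeeping: verifying that the inductive sequence $(\cA_n,\varphi_n)$ together with the endomorphisms $\alpha_i$ really yields a right LCM dynamical system in the sense of Definition \ref{df.respectLCM}, and that the fixed-point identity for $\phi$ translates into the compatibility $\phi_{n+1}\circ\varphi_n=\phi_n$ needed for $\widetilde{\phi}$ to descend to the limit. Once these are in place, everything else is a direct application of Theorem \ref{thm.dym.dilation} and Proposition \ref{prop.LCM.N}.
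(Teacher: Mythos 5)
Your proposal is correct and follows essentially the same route as the paper: the paper proves this corollary by declaring it the evident generalization of Example \ref{ex.dilation.Fk} from $k=2$ to general $k$, which is precisely the construction you carry out (the $k$-fold inductive limit, the orthogonal endomorphisms $\alpha_i$ certified by Proposition \ref{prop.LCM.N}, the recursive lift $\phi_n$ made compatible by the hypothesis, and an application of Theorem \ref{thm.dym.dilation} followed by restriction to $\cA_1$).
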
 

\begin{example} Let $\phi:\mathbb{C}\to\bh{H}$ by $\phi(x)=xI$ and $\beta_i$ be identity on $\cA$. Let $T_1,\cdots, T_k\in\bh{H}$ such that $\sum_{i=1}^k T_i T_i^*=I$. Then it is clear that \[\phi(x) = \sum_{i=1}^k T_i \phi(x) T_i^*.\]
Corollary \ref{cor.dilation.Fk} implies that we can dilate $(\phi, T)$ to a $\ast$-homomorphism $\pi:\mathbb{C}\to\bh{K}$ and isometries $V_1,\cdots,V_k\in\bh{K}$ with orthogonal ranges, and 
\[\pi(x)=\sum_{i=1}^k V_i \pi(x) V_i^*.\]
But $\pi(x)=xI$ is the only choice for $\pi$, and  thus $\sum_{i=1}^k V_i V_i^*=I$. This is precisely  Popescu's dilation of row contractions. 
\end{example} 

\begin{example} Let $\phi:M_k\to M_k$ be the map $\phi(E_{i,j})=\delta_{i,j}E_{i,j}$. In other words, $\phi$ maps a matrix $A$ to its diagonal. One can easily verify that $\phi$ is unital completely positive map but $\phi$ is not multiplicative for any $k\geq 2$. Let $\beta_i$ be the identity map on $M_k$ for each $i$, and
define $T_i=E_{i,1}$. Then for each $A\in M_k$, $\phi(A)=\diag(A_{11},\cdots, A_{kk})$, and one can check that 
\[\phi(A)=\sum_{i=1}^k T_i \phi(A) T_i^*.\]
Corollary \ref{cor.dilation.Fk} implies that we can dilate $\tcb{(\phi, T_i)}$ into a $\ast$-homomorphism $\pi: M_k\to\bh{K}$ and isometries $V_i$ with orthogonal ranges, such that for each $A\in M_k$, 
\[\pi(A)=\sum_{i=1}^k V_i \pi(A) V_i^*.\]

\end{example}

\begin{example} Let $X$ be a compact Hausdorff space and $\mu$ be a probability measure on $X$. One can define a unital completely positive map $\phi: C(X)\to\mathbb{C}$ by $\phi(f)=\int_X fd\mu$. Pick $t_1,\cdots, t_k\in\mathbb{C}$ such that $\sum_{i=1}^k |t_i|^2=1$, and thus 
\[\phi(f) =\sum_{i=1}^k t_i \phi(f) \overline{t_i}.\]

Corollary \ref{cor.dilation.Fk} implies that there exists a $\ast$-homomorphism $\pi:C(X)\to\bh{K}$ and isometries $V_1,\cdots,V_k\in\bh{K}$ with orthogonal ranges. Here, $\cK=\mathbb{C}\oplus \cK_1$ and with respect to such decomposition, 
\[\pi(f) = \begin{bmatrix} \int_X f(x) d\mu & * \\ * & * \end{bmatrix},\]
and,
\[V_i = \begin{bmatrix} t_i & 0 \\ * & * \end{bmatrix}.\] 
Moreover, for each $f\in C(X)$, 
\[\pi(f) = \sum_{i=1}^k V_i \pi(f) V_i^*.\]
\end{example}


\end{document}